\documentclass[12pt,a4paper]{article}
\usepackage{subfiles}
\usepackage{paper}

% set to true to remove all your problems
\setbool{noProblems}{false}
\setbool{noChanges}{true}
% citations in subfiles
\newcommand{\dobib}{\bibliographystyle{plain}\bibliography{Diss.bib}}

\title{A General Theorem of Gauß Using Pure Measures}
\author{Moritz Schönherr, Friedemann Schuricht\\ \small TU Dresden - Fachrichtung
Mathematik \\ \small 01062 Dresden, Germany}
\begin{document}
% This nullifys citation in subfiles
\renewcommand{\dobib}{}

\maketitle

\section*{Abstract}
This paper shows that finitely additive \tme{}s occur naturally in very
general Divergence Theorems. The main results are two such theorems. The first
proves the existence of \tpfa{} \tggtme{}s for sets of finite perimeter, which
yield a Gauß formula for essentially bounded vector fields having divergence
measure. The second extends a result of Silhavy \cite{silhavy_divergence_2009}
on normal traces. In particular, it is shown that a Gauß Theorem for unbounded
vector fields having divergence measure necessitates the use of \tpfa{} \tme{}s
acting on the gradient of the scalar field. All of these \tme{}s are shown to
have their \tcor{} on the boundary of the domain of integration.

\section{Introduction}
The Divergence Theorem, or Theorem of Gauß, is a very important theorem in real
analysis.  It connects the integral over a volume with the integral over the
bounding surface of said volume. Its classic form for smooth $\dom \subset \Rn$
and smooth vector fields
$\funv : \dom \to \Rn$ is
\begin{equation*}
	\I{\dom}{\divv{\funv}}{\lem} = \I{\bd{\dom}}{\funv \cdot
	\normal{}}{\ham^{n-1}} \,,
\end{equation*}
where $\lem$ is the Lebesgue measure, $\ham^{n-1}$ the $n-1$-dimensional
Hausdorff measure and $\normal{}$ the unit outer normal to $\dom$. 
In the theory of Continuum Mechanics, but
also in general analytical problems it is desirable to have the theorem at hand
for very general vector fields and very general domains of integration. The
main problem is to find a useful substitute for the area integral. Up to now,
the area integral was substituted for a continuous linear functional on a
function space on the boundary (cf. Silhavy \cite{silhavy_divergence_2009}) or
for a normal trace given by a measure on the boundary, in the case of the
vector fields considered being essentially bounded (cf. Chen
\cite{chen_gauss-green_2009},
\cite{chen_divergencemeasure_1999},\cite{chen_theory_2001},
\cite{chen_extended_2003})).
In Schuricht \cite{schuricht_new_2007}, a general limit formula is proved.

In this paper, it will be shown that \tpfa{} \tme{}s are a natural substitute
for area measure, when vector fields having divergence measure are considered.
There are two main results. The first one is a theorem for essentially bounded
vector fields having divergence measure. It is proved that for sets of finite
perimeter there is a normal measure which gives rise to a very general Gauß
formula. The second theorem treats unbounded vector fields and open sets. A
result due to Silhavy \cite{silhavy_divergence_2009} is extended and it is
proved that a Gauß-Green formula for these vector fields necessarily contains a
\tpfa{} \tme{} acting on the values of the gradient of the scalar field near
the boundary.

The first section is a primer on \tpfa{} \tme{}s. Many properties and
integration with respect to \tpfa{} \tme{}s are laid out.

The second section contains the results for essentially bounded vector fields.

The third section contains the theorem for unbounded vector fields and open
sets.

Concerning notation, in the following $n\in \N$ denotes a positive natural
number and $\Rn$ the vector space of real $n$-tuples. For a set $\dom \subset
\Rn$ the set $\dnhd{\dom}{\delta}$ denotes the open $\delta$-neighbourhood of
$\dom$. Open balls with radius $\delta >0$ and centre $\eR \in \Rn$ are written
$\ball{\eR}{\delta} = \dnhd{\{\eR\}}{\delta}$. The Borel subsets of $\dom$, i.e. the \tsme{} generated by all
relatively open sets in $\dom$, is denoted by $\bor{\dom}$. $\lem$ is the Lebesgue measure and $\ham^d$ the $d$-dimensional
Hausdorff measure. For set function $\me$ on $\dom$, $\reme{\me}{\als}$ denotes
the restriction of $\me$ to $\als$. The Banach space of equivalence classes of
$p$-integrable functions is denoted by $\lpbld{p}{\dom}$ and $\hocon{p}$
denotes the Hölder-conjugate of $p$. (Weak) Derivates of functions $\fun$ are written
$\Deriv{\fun}$. The divergence of a vector field $\funv$, be it
classical or distributional, is denoted by $\divv{\funv}$.

\section{A Short Primer On Pure Measures}
In this article, set functions $\me : \al \subset \pos{\dom} \to \R$ will be called
\textit{\tme{}}, if for all $m \in \N$ and every pairwise disjoint $\{\als_k\}_{k=1}^m\subset \al$ with
$\bigcup \limits_{k=1}^m \als_k \in \al$
\begin{equation*}
	\me\left(\bigcup \als_k\right) = \sum \limits_{k=1}^m \me(\als_k) \,.
\end{equation*}
If this holds with $m= \infty$, the \tme{} is called \tsme{}. A \tme{} is
called \textit{bounded} if 
\begin{equation*}
	\sup \limits_{\als \in \al} |\me(\als)| < \infty \,.
\end{equation*}
An \textit{\tal{}} is a class of sets which is stable under union, intersection and
differences and contains at least $\emptyset$.
The spaces of \tme{}s considered in this thesis are defined in accordance with \cite{rao_theory_1983}.

\begin{definition}
	Let $\dom \subset \Rn$ and $\al \subset \pos{\dom}$ be an algebra. The set of all bounded \tme s $\me : \al \to \R$ is denoted by
	\begin{equation*}
		\baa  \,.
		\nomenclature[m]{$\baa$}{space of bounded \tme{}s on $\dom$}
	\end{equation*}
	The set of all bounded \tsme{}s $\sme : \al \to \R$ is denoted by 
	\begin{equation*}
		\caa \,.		
		\nomenclature[m]{$\caa$}{space of bounded \tsme{}s on $\dom$}
	\end{equation*}
\end{definition}
The following proposition is an application of Riesz's Decomposition Theorem
 (cf. \cite[p. 241]{rao_theory_1983}). 
\begin{proposition}\label{prop:yosh_hew_dec}
	Let $\dom \subset \Rn$ and $\al \subset \pos{\dom}$ be an algebra. Then
	every $\me \in \baa$ can uniquely be decomposed into $\me_c \in \caa$
	and $\me_p \in \baa$ such that
	\begin{equation*}
		\me = \me_c + \me_p
	\end{equation*}
	\nomenclature[m]{$\me_c$}{$\sigma$-additive part of
	$\me$}\nomenclature[m]{$\me_p$}{\tpfa{} part of $\me$}
	and for every $\sme \in \caa$
	\begin{equation*}
		0 \leq \sme \leq |\me_p| \implies \sme = 0 \,.
	\end{equation*}
\end{proposition}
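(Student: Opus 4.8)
The plan is to recognise this statement as a band decomposition in a Riesz space, of Yosida--Hewitt type. First I would equip $\baa$ with its natural partial order, declaring $\me \geq 0$ whenever $\me(\als) \geq 0$ for all $\als \in \al$, and recall that under the total variation norm $\baa$ becomes a Dedekind complete vector lattice, with lattice operations given by $\me^{+}(\als) = \sup\{\me(E) : E \in \al,\ E \subset \als\}$ and $|\me| = \me^{+} + \me^{-}$. This is precisely the structure underlying Riesz's Decomposition Theorem as stated in \cite[p.~241]{rao_theory_1983}. The whole argument then reduces to exhibiting $\caa$ as a band in $\baa$ and translating the resulting disjoint decomposition into the asserted minorant condition.

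To prepare for that theorem I would check that $\caa$ is an order ideal: if $\sme \in \caa$, $\sme \geq 0$, and $0 \leq \tau \leq \sme$ in $\baa$, then $\tau \in \caa$. This follows by continuity at $\emptyset$. For any $A_{n} \downarrow \emptyset$ with $A_{n} \in \al$ the sets $A_{k}\setminus A_{k+1}$ are disjoint members of $\al$ with $\bigcup_{k}(A_{k}\setminus A_{k+1}) = A_{1} \in \al$, so countable additivity forces $\sme(A_{1}) = \sum_{k}\sme(A_{k}\setminus A_{k+1})$ to converge and hence $\sme(A_{n}) = \sum_{k\geq n}\sme(A_{k}\setminus A_{k+1}) \to 0$; squeezing $0 \leq \tau(A_{n}) \leq \sme(A_{n})$ shows $\tau$ is continuous at $\emptyset$, i.e. countably additive. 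Passing to Jordan parts extends this to all of $\caa$. That $\caa$ is moreover order-closed, so that it is genuinely a band and not merely an ideal, together with the Dedekind completeness of $\baa$, are the lattice-theoretic facts I would import from \cite{rao_theory_1983}. Riesz's Decomposition Theorem then yields the direct sum $\baa = \caa \oplus \caa^{d}$, where $\caa^{d} = \{\me_{p} \in \baa : |\me_{p}| \wedge |\sme| = 0 \text{ for all } \sme \in \caa\}$, so each $\me$ splits uniquely as $\me = \me_{c} + \me_{p}$ with $\me_{c} \in \caa$ and $\me_{p} \in \caa^{d}$; uniqueness is immediate from directness of the sum.

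It remains to identify membership in $\caa^{d}$ with the stated purity condition. If $\me_{p} \in \caa^{d}$ and $\sme \in \caa$ satisfies $0 \leq \sme \leq |\me_{p}|$, then $\sme \leq |\me_{p}|$ gives $\sme \wedge |\me_{p}| = \sme$, while disjointness gives $\sme \wedge |\me_{p}| = |\sme| \wedge |\me_{p}| = 0$, so $\sme = 0$. Conversely, assuming $|\me_{p}|$ has no nonzero countably additive minorant, I would fix an arbitrary $\sme \in \caa$ with $\sme \geq 0$ and set $\tau = |\me_{p}| \wedge \sme$; by the ideal property already established $\tau \in \caa$, and since $0 \leq \tau \leq |\me_{p}|$ the hypothesis forces $\tau = 0$, whence $|\me_{p}| \wedge |\sme| = 0$ for every $\sme \in \caa$ (applying this to $|\sme| \in \caa$) and $\me_{p} \in \caa^{d}$. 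I expect the genuine difficulty to lie not in this final translation, which is a routine lattice manipulation, but in securing the hypotheses of Riesz's theorem: verifying that $\baa$ is Dedekind complete and that $\caa$ is a projection band. These are exactly the structural facts drawn from \cite{rao_theory_1983}, so in the write-up the delicate point is to invoke that theorem with its hypotheses correctly checked rather than to reprove the band structure by hand.
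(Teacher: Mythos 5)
Your proposal is correct and follows exactly the route the paper intends: the paper gives no standalone proof but presents the proposition as an application of Riesz's Decomposition Theorem from \cite[p.~241]{rao_theory_1983}, i.e. decomposing $\baa$ as the direct sum of the band $\caa$ and its disjoint complement, which is precisely your argument with the hypotheses (Dedekind completeness of $\baa$, the ideal and band properties of $\caa$) and the translation of disjointness into the minorant condition spelled out. No gaps; this is the standard Yosida--Hewitt decomposition argument.
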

\begin{definition}
	Let $\dom \subset \Rn$ and $\al \subset \pos{\dom}$ be an \tal{}. Then
	every such \tme{} $\me_p$ is called
	\textbf{\tpfa}\index{pure@\tpfa{} \tme{}}.
	Notice that $\me_p$ is not $\sigma$-additive, by definition.
\end{definition}
One important example of \tme{}s that are \tpfa{} are density
\tme{}s. The following new example presents a particular \tdme{}, namely a
density at zero. In the literature, examples of \tpfa{} \tme{} are only known
for $\dom = \N$ (cf. \cite[p. 247]{rao_theory_1983}), they are defined on very
small \tal{}s (cf. \cite[p. 246]{rao_theory_1983}) or they are constructed in such a way that the \tme{} cannot be
computed explicitly, even on simple sets (cf. \cite[p.
57f]{yosida_finitely_1951}). The example given here is constructed on $\dom =
\Rn$ and lives on the Borel subsets of $\dom$.

\begin{example}\label{ex:dzero}
	Let $\dom := \ball{0}{1}\subset \Rn$ be open. Then there exists $\me
	\in \baA{\bor{\dom}}$, $\me \geq 0$ such that
	for every $\bals \in \bor{\dom}$
	\begin{equation*}
		\me(\bals) = \lim \limits_{\delta \downarrow 0} \frac{\lem(
		\bals \cap \ball{0}{\delta})}{\lem(\ball{0}{\delta})}
	\end{equation*}
	if this limit exists. This \tme{} is non-unique. Its existence is 
	shown Proposition 5.7 in \cite[p. 25]{schonherr_pure_2017} (take $\lambda:= \lem$ and
	$\ferm = \{0\}$). 
		
	It is shown in Example \ref{ex:dzero_pfa}
	that $\me$ is indeed \tpfa{}. Figure \ref{fig:dzero_pfa} shows the
	family $\{\als_k\}_{k\in \N} \subset \bor{\dom}$
	\begin{equation*}
		\als_k := \left[\frac{1}{k+2},\frac{1}{k+1}\right) \times
		[-1,1]^{n-1} \,.
	\end{equation*}
	For this family
	\begin{equation*}
		\sum \limits_{k\in \N} \me(\als_k \cap \dom) = 0 \neq
		\me\left(\left(0,\frac{1}{2}\right)\times [-1,1]^{n-1}\cap \dom \right) = \me
		\left(\bigcup\limits_{k =1}^\infty \als_k \cap \dom\right)\,.
	\end{equation*}
	Hence, $\me$ is not a \tsme{}.
\end{example}
\begin{figure}[H]
	\centering
	\begin{tikzpicture}[scale=1.2]
		\draw (-0.2,-0.1) node {x};
		\draw[gray,thick,->] (0,0) -- node[above] {$\delta$} (150:2.2);
		\draw[fill] (0,0) circle [radius = 0.01];
		\draw[gray,dashed] (0,0) circle [radius=2.2];
		\draw (0.1,-2) rectangle (0.2,2);
		\draw (0.2,-2) rectangle (0.4,2);
		\draw (0.4,-2) rectangle (0.7,2);
		\draw (0.7,-2) rectangle (1.0,2);
		\draw (1.0,-2) rectangle node {$A_k$}(1.5,2);
		\draw (1.5,-2) rectangle node {...} (2.1,2);
		\draw (2.1,-2) rectangle node {$A_2$} (2.8,2);
		\draw (2.8,-2) rectangle node {$A_1$} (3.8,2);
	\end{tikzpicture}
	\caption{A family of sets on which $\me$ is not
	$\sigma$-additive}\label{fig:dzero_pfa}
\end{figure}
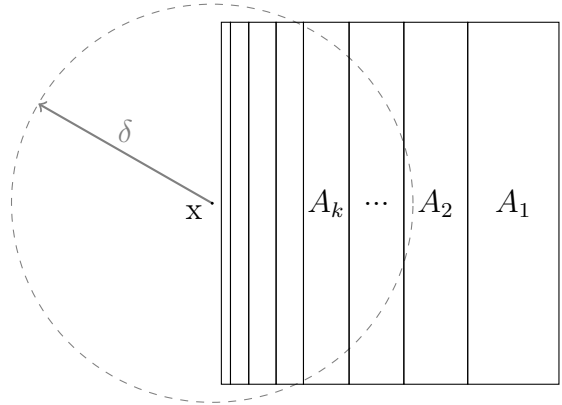
Measures that do not charge sets of Lebesgue measure
zero are of special interest, because these \tme{}s lend themselves naturally to the integration of
functions that are only defined outside of a set of measure zero. When treating non $\sigma$-additive measures, one carefully has to
distinguish the following two notions (cf. \cite[p. 159]{rao_theory_1983}).
\begin{definition}
	Let $\dom \subset \Rn, \al \subset \pos{\dom}$ be an algebra and
	$\metoo\in \baa$. Then $\me \in \baa$ is called 
	\begin{enumerate}
		\item \textbf{\tac}\index{absolutely@\tac{}} with respect to
			$\metoo$, if for every $\eps > 0$ there exists $\delta > 0$ such that for all $\als \in \al$
			\begin{equation*}
				|\metoo|(\als) < \delta \implies |\me(\als)| < \eps \,.
				\nomenclature[m]{$\me \ac \metoo$}{$\me$ is \tac{} w.r.t.
			$\metoo$}
			\end{equation*}
			In this case, write $\me \ac \metoo$.
		\item \textbf{\twac}\index{weakly absolutely@\twac{}} with
			respect to $\metoo$, if for every $\als \in \al$
			\begin{equation*}
				|\metoo|(\als) = 0 \implies \me(\als) = 0 \,.
				\nomenclature[m]{$\me \wac \metoo$}{$\me$ is \twac{}
			w.r.t. $\metoo$}
			\end{equation*}
			In this case, write $\me \wac \metoo$.
	\end{enumerate}
	The set of all \twac{} \tme{}s in $\baa$ is denoted by 
	\begin{equation*}
		\baaw{\metoo} \,.
		\nomenclature[m]{$\baaw{\metoo}$}{bounded and \twac{} measures
		w.r.t. $\metoo$}
	\end{equation*}
\end{definition}
As in Proposition \ref{prop:yosh_hew_dec}, \twac{} \tme{}s can be decomposed
into \tpfa{} and $\sigma$-additive parts.
\begin{proposition}\label{prop:dec_pfa_wac}
	Let $\dom \subset \Rn$, $\al \subset \pos{\dom}$ be an algebra and
	$\metoo \in \baA{\al}$.

	Then for every $\me \in \baaw{\metoo}$ there exist unique $\me_c \in \caa
	\cap \baaw{\metoo}$, $\me_p \in \baaw{\metoo}$ such that
	\begin{equation*}
		\me = \me_c + \me_p 
	\end{equation*}
	such that for all $\sme \in \caa$
	\begin{equation*}
		0 \leq \sme \leq \tova{\me_p} \implies \sme = 0 \,.
	\end{equation*}
\end{proposition}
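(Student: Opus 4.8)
The plan is to obtain this as a refinement of Proposition \ref{prop:yosh_hew_dec}. Since $\baaw{\metoo} \subset \baa$, any $\me \in \baaw{\metoo}$ is in particular a bounded \tme{}, so Proposition \ref{prop:yosh_hew_dec} already furnishes a unique decomposition $\me = \me_c + \me_p$ with $\me_c \in \caa$, $\me_p \in \baa$, and the purity property $0 \le \sme \le \tova{\me_p} \implies \sme = 0$ for all $\sme \in \caa$. Hence the only genuinely new content is that both components remain \twac{} with respect to $\metoo$, that is $\me_c \in \caa \cap \baaw{\metoo}$ and $\me_p \in \baaw{\metoo}$, and that uniqueness persists within this smaller class.

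First I would pass from weak absolute continuity of $\me$ to a statement about its total variation: I claim that $\tova{\metoo}(\als) = 0$ implies $\tova{\me}(\als) = 0$ for every $\als \in \al$. Indeed, fix such an $\als$. The total variation of a bounded \tme{} is a nonnegative, hence monotone, \tme{}, so every $\als' \in \al$ with $\als' \subset \als$ satisfies $\tova{\metoo}(\als') = 0$, whence $\me(\als') = 0$ by $\me \wac \metoo$. Taking the supremum of $\sum_i |\me(\als_i)|$ over all finite partitions of $\als$ into members of $\al$ then gives $\tova{\me}(\als) = 0$, proving the claim.

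The hard part will be controlling the variations of the two components, namely establishing $\tova{\me_c} \le \tova{\me}$ and $\tova{\me_p} \le \tova{\me}$. This rests on the fact that the decomposition of Proposition \ref{prop:yosh_hew_dec} is a band decomposition, i.e.\ $\me_c$ and $\me_p$ are lattice-disjoint in the vector lattice $\baa$. To see this I would set $\sme := \tova{\me_c} \wedge \tova{\me_p}$: since $\me_c \in \caa$ we have $\tova{\me_c} \in \caa$, and as $\caa$ is an order ideal in $\baa$ (anything dominated by a bounded $\sigma$-additive measure is again $\sigma$-additive), the relation $0 \le \sme \le \tova{\me_c}$ forces $\sme \in \caa$; then $0 \le \sme \le \tova{\me_p}$ together with the purity property gives $\sme = 0$. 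Disjointness of $\me_c$ and $\me_p$ then yields $\tova{\me} = \tova{\me_c + \me_p} = \tova{\me_c} + \tova{\me_p}$, hence both desired inequalities. Combining with the previous step, whenever $\tova{\metoo}(\als) = 0$ we get $\tova{\me}(\als) = 0$, so $|\me_c(\als)| \le \tova{\me_c}(\als) \le \tova{\me}(\als) = 0$ and likewise $\me_p(\als) = 0$; therefore $\me_c, \me_p \in \baaw{\metoo}$.

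For uniqueness, any decomposition $\me = \me_c' + \me_p'$ meeting the stated hypotheses has $\me_c' \in \caa$, $\me_p' \in \baa$ and the purity property, so it is also admissible for Proposition \ref{prop:yosh_hew_dec}; the uniqueness there gives $\me_c' = \me_c$ and $\me_p' = \me_p$. I expect the band/disjointness step to be the principal obstacle, everything else being monotonicity bookkeeping. Should the Riesz-space structure of $\baa$ (Dedekind completeness and $\caa$ being a band) not be available as a cited fact, one would instead have to read the disjointness of $\me_c$ and $\me_p$ directly off the explicit construction in Riesz's Decomposition Theorem.
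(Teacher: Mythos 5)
Your proof is correct, but it cannot be matched line-by-line against the paper, because the paper gives no argument of its own here: its proof is the single citation of Proposition 3.7 in \cite{schonherr_pure_2017}. What you have produced instead is a self-contained derivation from Proposition \ref{prop:yosh_hew_dec}, and every ingredient you rely on is sound and available: the monotonicity argument showing that $\tova{\metoo}(\als)=0$ forces $\tova{\me}(\als)=0$ uses only that \twac{} is phrased via the total variation of $\metoo$; your tail-estimate argument that $\caa$ is an order ideal of $\baa$ is valid (Dedekind completeness is not needed, only the existence of finite infima in the lattice of bounded \tme{}s, which is standard in \cite{rao_theory_1983}); and the additivity $\tova{\me_c+\me_p}=\tova{\me_c}+\tova{\me_p}$ for orthogonal parts is exactly the fact the paper itself invokes in Example \ref{ex:dzero_pfa}, with the citation \cite[p. 25]{rao_theory_1983}, so you may use it as quoted rather than extract it from the construction behind Riesz's theorem. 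Incidentally, the disjointness step you single out as the principal obstacle can be bypassed entirely: if $\tova{\metoo}(\als)=0$, then $\me(B\cap\als)=0$ for every $B\in\al$ by monotonicity and \twac{}, so the restrictions $\reme{\me_c}{\als}$ and $\reme{\me_p}{\als}$ decompose the zero \tme{}; since restriction preserves $\sigma$-additivity and preserves purity (because $\tova{\reme{\me_p}{\als}}\leq\tova{\me_p}$), the uniqueness assertion of Proposition \ref{prop:yosh_hew_dec} forces $\reme{\me_c}{\als}=\reme{\me_p}{\als}=0$, hence $\me_c(\als)=\me_p(\als)=0$ directly. Your uniqueness argument, that any $\metoo$-\twac{} decomposition is in particular an admissible decomposition for Proposition \ref{prop:yosh_hew_dec}, is exactly right. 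The net effect of your approach is to make the paper self-contained at this point, revealing the proposition as a corollary of results already stated in it, whereas the paper's citation defers the entire burden to the external reference.
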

\begin{proof}
	See Proposition 3.7 in \cite[p. 8]{schonherr_pure_2017}.	
\end{proof}
\begin{example}\label{ex:dzero_pfa}
	Since the \tme{} $\me$ from Example \ref{ex:dzero} is positive and
	$\me_c \perp \me_p$, using the additivity of the total variation on
	orthogonal element (cf. \cite[p. 25]{rao_theory_1983},
	\cite[p. 3]{schonherr_pure_2017})  yields
	\begin{equation*}
		0 \leq \tova{\me_c} \leq \tova{\me_c} + \tova{\me_p} = \tova{\me} = \me \,.
	\end{equation*}
	Hence, for every $\delta > 0$
	\begin{equation*}
		\tova{\me_c}(\ball{0}{\delta}^c)  = 0 \,.
	\end{equation*}
	Thus
	\begin{equation*}
		\tova{\me_c}(\dom \setminus \{0\}) = \lim \limits_{\delta \downarrow 0}
		\tova{\me_c}(\ball{0}{\delta}^c) = 0  \,.
	\end{equation*}
	But $\tova{\me_c}(\{0\}) \leq \me(\{0\})=0$. Hence
	\begin{equation*}
		\tova{\me_c}(\dom) = 0 
	\end{equation*}
	and $\me = \me_p$ is \tpfa{}.
\end{example}
The structure of $\me_p$ is described by the following proposition taken from
\cite[p. 244]{rao_theory_1983} (cf. \cite[p. 56]{yosida_finitely_1951}).

\begin{proposition}\label{prop:pfa_iff_supseq}
	Let $\dom\subset \Rn$, $\sal \subset \pos{\dom}$ be a \tsal{} and $\sme
	\in \cas$, $\sme \geq 0$.  Then $\me \in \basws$ is \tpfa{} if and only if there exists a decreasing sequence $\{A_k\}_{k\in \N}\subset \sal$ such that
	\begin{equation*}
		\sme(\als_k) \xrightarrow{k \to \infty} 0
	\end{equation*}
	and for all $k \in \N$
	\begin{equation*}
		|\me_p|(\als_k^c) = 0 \,.
	\end{equation*}
\end{proposition}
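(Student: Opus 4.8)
The plan is to prove the two implications separately, reducing throughout to the nonnegative case. Since $\me$ is \tpfa{} exactly when $\tova{\me}$ is (if $\me_c\neq 0$ then $\tova{\me_c}$ is a nonzero $\sigma$-additive minorant of $\tova{\me}$, and conversely), and since in that case $\tova{\me}=\tova{\me_p}$ by the additivity of the total variation on the orthogonal parts used in Example \ref{ex:dzero_pfa}, it suffices to treat a nonnegative $\me\in\basws$ and to read the displayed condition as $\me(\als_k^c)=0$ (equivalently $\tova{\me_p}(\als_k^c)=0$ once $\me$ is known to be \tpfa{}). I would also record one standard input: for two bounded nonnegative $\sigma$-additive measures weak absolute continuity coincides with the $\eps$-$\delta$ notion, so that $\sme(\als_k)\to 0$ forces $\nu(\als_k)\to 0$ for every $\nu\in\cas$ with $\nu\wac\sme$.

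For the direction $(\Leftarrow)$, assume such a decreasing sequence $\{\als_k\}$ exists with $\me(\als_k^c)=0$. By Proposition \ref{prop:dec_pfa_wac} it is enough to show that the only $\nu\in\cas$ with $0\le\nu\le\tova{\me}$ is $\nu=0$, for then $\tova{\me_c}\le\tova{\me}$ forces $\me_c=0$. Given such a $\nu$, monotonicity and $\tova{\me}(\als_k^c)=0$ give $\nu(\als_k^c)=0$, hence $\nu(\als_k)=\nu(\dom)$ for every $k$. Since $\nu\le\tova{\me}\wac\sme$, also $\nu\wac\sme$, so the standard input yields $\nu(\als_k)\to 0$; combined with $\nu(\als_k)=\nu(\dom)$ this gives $\nu(\dom)=0$. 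Thus $\tova{\me}$, and therefore $\me$, is \tpfa{}. This is the routine direction.

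For $(\Rightarrow)$, assume $\me\ge 0$ is \tpfa{}. The key input from the theory of charges (cf. \cite{rao_theory_1983}, \cite{yosida_finitely_1951}) is the representation of the $\sigma$-additive part as the largest $\sigma$-additive minorant, $\me_c(\dom)=\inf\sum_n\me(E_n)$ over measurable partitions $\{E_n\}$ of $\dom$. Purity gives $\me_c=0$, so for each $j$ there is a partition with $\sum_n\me(E_n)<2^{-j}$. Writing $F_N$ for the partial unions, $\me(F_N)<2^{-j}$ while $F_N\uparrow\dom$; since $\sme$ is $\sigma$-additive and finite, $\sme(F_N^c)\to 0$, so I fix $N_j$ and set $Q_j:=F_{N_j}^c$, obtaining $\me(Q_j^c)<2^{-j}$ and $\sme(Q_j)<2^{-j}$. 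Now define $\als_m:=\bigcup_{j\ge m}Q_j$. These are decreasing, and $\sme(\als_m)\le\sum_{j\ge m}\sme(Q_j)<2^{-(m-1)}\to 0$ by countable subadditivity of $\sme$. The point is that $\als_m^c=\bigcap_{j\ge m}Q_j^c\subseteq Q_j^c$ for every $j\ge m$, so monotonicity of $\me$ gives $\me(\als_m^c)\le\me(Q_j^c)<2^{-j}$ for all $j\ge m$, whence $\me(\als_m^c)=0$. This produces exactly the required sequence.

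I expect the main obstacle to be the direction $(\Rightarrow)$, and within it the passage from the abstract purity of $\me$ (no nonzero $\sigma$-additive minorant) to a workable constructive statement, i.e.\ invoking the partition/largest-minorant representation of $\me_c$. Once the almost-full sets $Q_j$ of small $\sme$-measure are in hand, the upgrade to \emph{exactly} $\me$-full sets is the short but slightly delicate step: it is obtained not by a limit (which finite additivity forbids) but by the observation that $\als_m^c$ lies inside every $Q_j^c$, so its $\me$-measure is below $\inf_{j\ge m}2^{-j}=0$, while only $\sme$ --- which is genuinely $\sigma$-additive --- enters the countable estimate $\sme(\als_m)\to 0$.
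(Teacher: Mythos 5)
Your proof is correct; the complication in comparing it to ``the paper's proof'' is that the paper gives none --- the proposition is quoted from \cite[p. 244]{rao_theory_1983} (cf. \cite[p. 56]{yosida_finitely_1951}) --- so the natural benchmark is the classical argument in those sources, which is essentially what you reconstruct. Your decision to read the displayed condition as $|\me|(\als_k^c)=0$ rather than literally as a condition on the pure part $\me_p$ is the right repair and worth making explicit: with the literal reading the ``if'' direction is false, since a nonzero $\sigma$-additive $\me$ (e.g.\ $\me=\sme$ when $\sme\neq 0$) has $\me_p=0$, so that the condition holds vacuously with $\als_k=\emptyset$. The only heavy input you invoke, namely that for $\me\geq 0$ the part $\me_c$ is the largest $\sigma$-additive minorant of $\me$ and is given by $\me_c(\dom)=\inf\sum_n\me(E_n)$ over countable measurable partitions, is a correct standard fact and is not circular (it concerns the Yosida--Hewitt decomposition alone, not $\sme$ or aura sequences); still, it carries most of the weight of the forward direction, so a fully self-contained write-up should include its short proof: the infimum defines a countably additive measure below $\me$ that dominates every countably additive minorant, and uniqueness in Proposition \ref{prop:yosh_hew_dec} then identifies it with $\me_c$. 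The tail-union construction $\als_m=\bigcup_{j\geq m}Q_j$, which turns sets that are almost full for $\me$ and small for $\sme$ into sets whose complements are exactly $\me$-null, is precisely the classical device, and you execute it correctly, using $\sigma$-additivity only where it is available (on $\sme$, and in forming the countable unions inside the $\sigma$-algebra $\sal$). Finally, in the converse direction you can avoid the equivalence of weak and $\eps$-$\delta$ absolute continuity: continuity from above gives $\sme\left(\bigcap_k\als_k\right)=\lim_k\sme(\als_k)=0$, hence $\nu\left(\bigcap_k\als_k\right)=0$ because $\nu\wac\sme$, and then $\nu(\als_k)\to\nu\left(\bigcap_k\als_k\right)=0$ again by continuity from above of the finite $\sigma$-additive measure $\nu$; this is marginally more elementary than your route, but both are valid.
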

\begin{definition}
	Let $\dom \subset \Rn$, $\sal \subset \pos{\dom}$ be a \tsal{}, $\sme
	\in \cas$, $\sme \geq 0$ and $\me_p \in \basws$ be \tpfa{}. Then every $\als\in \sal$ such that
	\begin{equation*}
		|\me_p|(\als^c) = 0
	\end{equation*}
	is called \textbf{\taur}\index{aura@\taur} of $\me_p$. 

	Any decreasing sequence $\{\als_k\}_{k\in \N} \subset \sal$ of auras
	for $\me_p$ such that 
	\begin{equation*}
		\sme(\als_k) \xrightarrow{k\to \infty} 0
	\end{equation*}
	is called \textbf{\tsupseq}\index{aura seq@\tsupseq{}}.
\end{definition}

Intuitively speaking, \twac{} \tme{}s are \tpfa{} if and only if they concentrate in the vicinity of a set of \tme{} zero.
Reviewing Example \ref{ex:dzero}, the support (cf. \cite[p.30]{Ambrosio2000}) of the \tme{} can be seen to lie
outside of $\dom\setminus \{0\}$. Yet the construction of the \tme{} would
still work on this set. Hence, it is possible for a \tpfa{} \tme{} to have support outside
of its domain of definition. This necessitates the following definition of
\tcor{}.

\begin{definition}
	Let $\dom \subset \Rn$, $\al \subset \pos{\dom}$ be an \tal{} containing every relatively open set in $\dom$. Furthermore let $\me \in \baa$. Then the set
\begin{equation*}
	\cor{\me} := \{ \eR \in \Rn\setpipe |\me|(\nhd \cap
	\dom) > 0, \forall \nhd \subset \Rn, \nhd \text{ open}, \eR\in \nhd\} 
	\end{equation*}
	\nomenclature[m]{$\cor{\me}$}{\tcor{} of $\me$}
	is called \textbf{\tcor}\index{core@\tcor{}} of $\me$.

	 Let $d\in [0,n]$ be the Hausdorff dimension of $\cor{\me}$. Then $d$
	 is called \textbf{\tcordimo{\me}}\index{core dimension@\tcordimo{\me}} and $\me$ is called \textbf{\tdd{d}}.
\end{definition}
\begin{proposition}\label{prop:corenul_pfa}
	Let $\dom \in \bor{\Rn}$ and $\me \in \bawl{\dom}$.

	If $\cor{\me} \cap \dom$ is a $\lem$-\tnulset{} then $\me$ is \tpfa{}.
\end{proposition}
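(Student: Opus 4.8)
The plan is to invoke the Riesz/Yosida--Hewitt decomposition of Proposition~\ref{prop:yosh_hew_dec}, writing $\me = \me_c + \me_p$ with $\me_c \in \caa$ $\sigma$-additive and $\me_p$ \tpfa{}, and then to show that the hypothesis forces $\me_c = 0$; since then $\me = \me_p$, the claim follows. Throughout I shall use that the two parts are orthogonal, so that the total variation is additive, $\tova{\me} = \tova{\me_c} + \tova{\me_p}$ (this is exactly the computation already carried out in Example~\ref{ex:dzero_pfa}); in particular $\tova{\me_c} \leq \tova{\me}$ as set functions, and it is this domination that keeps the pure part from cancelling the mass of $\me_c$.

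First I would transfer the \twac{} property to $\me_c$. Because $\me \in \bawl{\dom}$, for $\bals \in \bor{\dom}$ with $\lem(\bals)=0$ every Borel subset $\bals' \subseteq \bals$ also satisfies $\lem(\bals')=0$, hence $\me(\bals')=0$, so $\tova{\me}(\bals)=0$; the domination $\tova{\me_c}\leq\tova{\me}$ then gives $\me_c(\bals)=0$. Thus $\me_c \ac \lem$ in the $\sigma$-additive sense, and since $\lem$ is $\sigma$-finite the Radon--Nikodym Theorem yields $g \in \lpbld{1}{\dom}$ with $\me_c(\bals)=\int_{\bals} g\,d\lem$ and $\tova{\me_c}(\bals)=\int_{\bals}|g|\,d\lem$ for all $\bals \in \bor{\dom}$. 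Extending $g$ by $0$ to $\Rn$, one has $\{|g|>0\}\subset\dom$, and proving $\me=\me_p$ reduces to showing $\lem(\{|g|>0\})=0$.

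The core step compares $\cor{\me}$ with the set of Lebesgue density points of $g$. From $\tova{\me_c}\leq\tova{\me}$ it follows that whenever $\tova{\me_c}(\nhd\cap\dom)>0$ for every open $\nhd\ni\eR$, also $\tova{\me}(\nhd\cap\dom)>0$, i.e.\ $\eR\in\cor{\me}$. Now apply the Lebesgue Differentiation Theorem: for $\lem$-a.e.\ $\eR$ the quantity $\tova{\me_c}(\ball{\eR}{\delta}\cap\dom)=\int_{\ball{\eR}{\delta}}|g|\,d\lem$ satisfies $\lem(\ball{\eR}{\delta})^{-1}\int_{\ball{\eR}{\delta}}|g|\,d\lem\to|g(\eR)|$ as $\delta\downarrow 0$, the restriction to $\dom$ being harmless since $g$ vanishes off $\dom$. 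At such a density point $\eR$ with $|g(\eR)|>0$ the value $\tova{\me_c}(\ball{\eR}{\delta}\cap\dom)$ is positive for all small $\delta$, hence for all $\delta$, so every open neighbourhood of $\eR$ carries positive $\tova{\me_c}$-mass in $\dom$; by the preceding line $\eR\in\cor{\me}$, and since $\eR\in\{|g|>0\}\subset\dom$ in fact $\eR\in\cor{\me}\cap\dom$. Therefore $\lem$-a.e.\ point of $\{|g|>0\}$ lies in the set $\cor{\me}\cap\dom$, which is measurable because the complement of $\cor{\me}$ is open.

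The hypothesis $\lem(\cor{\me}\cap\dom)=0$ now forces $\lem(\{|g|>0\})=0$, so $g=0$ $\lem$-a.e., whence $\me_c=0$ and $\me=\me_p$ is \tpfa{}. I expect the main obstacle to be establishing and correctly using the domination $\tova{\me_c}\leq\tova{\me}$: it is precisely this no-cancellation property (coming from orthogonality of the two parts of the decomposition) that guarantees density points of $g$ land in $\cor{\me}$ and not merely in the core of $\me_c$, and it is also what lets the \twac{} property descend to $\me_c$. Once this is secured, both the Radon--Nikodym representation and the Lebesgue density computation are routine.
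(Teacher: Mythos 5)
Your proof is correct, but there is nothing in the paper to compare it against: Proposition~\ref{prop:corenul_pfa} is stated without any proof (its neighbours defer to \cite{schonherr_pure_2017}, this one does not even do that). Judged on its own merits, every step holds up: the decomposition via Proposition~\ref{prop:yosh_hew_dec}, the domination $\tova{\me_c}\leq\tova{\me}$ obtained from additivity of the total variation on the orthogonal parts (exactly the computation of Example~\ref{ex:dzero_pfa}), the descent of the \twac{} property to $\me_c$, Radon--Nikodym for the finite $\sigma$-additive part against the $\sigma$-finite $\lem$, and the density-point argument placing $\lem$-almost every point of $\{|g|>0\}$ into $\cor{\me}\cap\dom$. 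The measurability remark is also right: the complement of $\cor{\me}$ is open, so the core is closed.

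It is worth contrasting your route with the one the paper's own toolkit suggests, namely the \tsupseq{} characterisation of Proposition~\ref{prop:pfa_iff_supseq} applied to $\als_k:=\dnhd{\cor{\me}}{1/k}\cap\dom$: there one shows $\tova{\me}(\als_k^c)=0$ by covering $\als_k^c$ with $\tova{\me}$-null neighbourhoods of points off the core, and $\lem(\als_k)\to\lem(\cor{\me}\cap\dom)=0$. That argument needs a \emph{finite} subcover, since a merely finitely additive \tme{} has no countable subadditivity -- hence $\dom$ bounded -- and it needs the restriction of $\lem$ to $\dom$ to be bounded for Proposition~\ref{prop:pfa_iff_supseq} to apply at all. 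Your argument needs neither, so it actually establishes the proposition in its stated generality $\dom\in\bor{\Rn}$, at the price of importing Radon--Nikodym and Lebesgue differentiation. A small simplification is available if you wish to avoid differentiation theory: once $\me_c$ has density $g$, cover $\{|g|>0\}\setminus\cor{\me}$ by open sets $\nhd$ with $\tova{\me}(\nhd\cap\dom)=0$; then $\int_{\nhd\cap\dom}|g|\,d\lem=\tova{\me_c}(\nhd\cap\dom)=0$, and since the measure with density $|g|$ (unlike $\me$) is $\sigma$-additive, Lindelöf yields $|g|=0$ almost everywhere on the whole union, giving $\lem(\{|g|>0\})=0$ directly.
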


Now, integration with respect to \tme{} which are not necessarily
$\sigma$-additive is outlined.
Measurability of functions is not defined through the regularity of preimages but by
approximability by \tsimf{} functions in measure. In this definition, the
\tme{} is needed on possibly non-measurable sets. Hence, an \toume{}
has to be used. This \toume{} is defined as in the case of \tsme{}s (cf.
\cite[p. 86]{rao_theory_1983}, \cite[p. 42]{halmos_measure_1974}).
\begin{definition}
	Let $\dom \subset \Rn$ and $\al \subset \pos{\dom}$ be an \tal{}. For
	$\me \in \baa$, $\me \geq 0$ the \textbf{\toume} of $\me$ is defined
	for $\bals \in \pos{\dom}$ by
	\begin{equation*}
		\oume{\me}(\bals) := \inf\limits_{\substack{\als\in \al,\\\bals
		\subset \als}}\me(\als) \,.
		\nomenclature[m]{$\oume{\me}$}{\toume{} for $\me$}
	\end{equation*}
\end{definition}

Now, convergence in measure can be defined. 
The definition is taken from \cite[p. 92]{rao_theory_1983} (cf.
\cite[p. 91]{halmos_measure_1974}).
\begin{definition}
	Let $\dom \subset \Rn$ and $\al \subset \pos{\dom}$ be an \tal{} and
	$\me: \al \to \R$ be a \tme{}. A sequence $\{\fun_k\}_{k \in \N}$ of
	functions $\fun_k : \dom \to \R$ is said to
	converge \textbf{\tconvim{}}\index{convergence \tconvim{}} to a function $\fun : \dom \to \R$ if for every $\eps > 0$
	\begin{equation*}
		\lim \limits_{k \to \infty} \oume{\tova{\me}} \{\eR \in \dom \setpipe |\fun_k(\eR) - \fun(\eR) | > \eps \} = 0 \,.
	\end{equation*}
	In this case, write
	\begin{equation*}
		\fun_k \convim{\me} \fun \,.
		\nomenclature[i]{$\fun_k \convim{\me} \fun$}{$\fun_k$ converge
		\tconvim{} to $\fun$}
	\end{equation*}
	\end{definition}
Note that the limit in measure is not unique, yet. Therefore, the following
notion of equality almost everywhere is needed.
The definition is taken from \cite[p. 88]{rao_theory_1983}.
\begin{definition}
	Let $\dom \subset \Rn$, $\al \subset \pos{\dom}$ and $\me: \al \to \R$ be a measure.

	Then $\fun: \dom \to \R$ is called \textbf{\tnulfun}\index{null
	function@\tnulfun{}}, if for every $\eps >0$
	\begin{equation*}
		\oume{\tova{\me}} \left(\left\{\eR \in \dom \setpipe |\fun(\eR)|>\eps\right\}\right) = 0 \,.
	\end{equation*}

	Two functions $\fun_1: \dom \to \R$, $\fun_2: \dom \to \R$ are called
	\textbf{equal almost everywhere (\tmae{})}\index{equal \tmae{}} with respect to $\me$, if $\fun_1 - \fun_2$ is a \tnulfun{}.

	In this case, write
	\begin{equation*}
		\fun_1 = \fun_2 \mae{\me}
		\nomenclature[i]{$\fun_1 = \fun_2 \mae{\me}$}{$\fun_1 = \fun_2$ \tmae{}}
	\end{equation*}
\end{definition}
\begin{remark}
	If $\fun: \dom \to \R$ is a \tnulfun{}, then it need not be true that
	\begin{equation}\label{eq:nullfunc}
		\oume{\tova{\me}} \left (\left\{\eR\in \dom \setpipe \fun (\eR) \neq 0 \right\}\right ) = 0 \,.
	\end{equation}
	Take e.g. the \tdme{} $\me$ introduced in Example \ref{ex:dzero} and
	$\fun(\eR) := |\eR|$. Then $\fun$ is a \tnulfun{} but
	\begin{equation*}
		\oume{\tova{\me}}(\{\eR\in \Rn | \fun(\eR)\neq 0\} =
		\me(\ball{0}{1}\setminus\{0\}) = 1 > 0 \,.
	\end{equation*}

	This entails that the notion of equality almost everywhere that was
	defined above does not imply the existence of a \tnulset{} such that
	$\fun_1 = \fun_2$ outside of that set. Take e.g. the \tdme{} introduced
	in Example \ref{ex:dzero}, $\fun_1(\eR) := |\eR|$ and
	$\fun_2(\eR):=2\fun_1(\eR)$.

	On the other hand, if $\me$ is a \tsme{} and $\al$ a \tsal{}, then Equation \eqref{eq:nullfunc} is equivalent to $\fun$ being a \tnulfun{} (cf. \cite[p. 89]{rao_theory_1983}).

\end{remark}
The limit in measure turns out to be unique in the sense of almost equality.
This is stated in the following proposition taken from \cite[p. 92]{rao_theory_1983}.
\begin{proposition}
	Let $\dom \subset \Rn$, $\al \subset \pos{\dom}$ be an \tal{} and $\me : \al \to \R$ be a measure. 
	Furthermore let $\{\fun_k\}_{k \in \N}$ be a sequence of functions
	$\fun_k : \dom \to \R$ and $\fun, \tilde{\fun} : \dom \to \R$ be functions such that
	\begin{equation*}
		\fun_k \convim{\me} \fun\,.
	\end{equation*}
	Then
	\begin{equation*}
		\fun_k \convim{\me} \tilde{\fun} \iff	\fun = \tilde{\fun} \mae{\me}
	\end{equation*}
\end{proposition}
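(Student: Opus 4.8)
The plan is to reduce the whole statement to the elementary triangle inequality combined with two purely set-theoretic properties of the outer measure $\oume{\tova{\me}}$: monotonicity and finite subadditivity. Monotonicity is immediate from the definition of $\oume{\tova{\me}}$ as an infimum over covers in $\al$. Finite subadditivity is the one point deserving care, precisely because $\me$ need only be finitely additive: given $\bals_1, \bals_2 \subset \dom$ and $\eps > 0$, I would choose covers $\als_i \in \al$ with $\bals_i \subset \als_i$ and $\tova{\me}(\als_i) < \oume{\tova{\me}}(\bals_i) + \eps$; since $\als_1 \cup \als_2 \in \al$ covers $\bals_1 \cup \bals_2$ and the positive finitely additive set function $\tova{\me}$ is finitely subadditive on the algebra $\al$, letting $\eps \downarrow 0$ gives $\oume{\tova{\me}}(\bals_1 \cup \bals_2) \le \oume{\tova{\me}}(\bals_1) + \oume{\tova{\me}}(\bals_2)$. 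No $\sigma$-additivity is invoked anywhere, which is the only genuine subtlety and hence the main obstacle, such as it is.

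The pointwise engine is the inclusion, valid for every $k$ and every $\eps > 0$,
\begin{equation*}
	\{\eR \in \dom \setpipe |\fun(\eR) - \tilde{\fun}(\eR)| > \eps\} \subset \{\eR \in \dom \setpipe |\fun(\eR) - \fun_k(\eR)| > \eps/2\} \cup \{\eR \in \dom \setpipe |\fun_k(\eR) - \tilde{\fun}(\eR)| > \eps/2\}\,,
\end{equation*}
which follows from $|\fun(\eR) - \tilde{\fun}(\eR)| \le |\fun(\eR) - \fun_k(\eR)| + |\fun_k(\eR) - \tilde{\fun}(\eR)|$. Applying monotonicity and finite subadditivity of $\oume{\tova{\me}}$ to this inclusion yields, for every $k$,
\begin{equation*}
	\oume{\tova{\me}}\{|\fun - \tilde{\fun}| > \eps\} \le \oume{\tova{\me}}\{|\fun - \fun_k| > \eps/2\} + \oume{\tova{\me}}\{|\fun_k - \tilde{\fun}| > \eps/2\}\,.
\end{equation*}

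For the direction $\Rightarrow$, I assume both $\fun_k \convim{\me} \fun$ and $\fun_k \convim{\me} \tilde{\fun}$. The left-hand side of the last estimate is independent of $k$, while both terms on the right tend to $0$ as $k \to \infty$ by the two convergence hypotheses; hence the left-hand side vanishes for every $\eps > 0$, which is exactly the statement that $\fun - \tilde{\fun}$ is a \tnulfun{}, i.e. $\fun = \tilde{\fun} \mae{\me}$. For the direction $\Leftarrow$, I assume $\fun_k \convim{\me} \fun$ and $\fun = \tilde{\fun} \mae{\me}$, and use the re-arranged inclusion $\{|\fun_k - \tilde{\fun}| > \eps\} \subset \{|\fun_k - \fun| > \eps/2\} \cup \{|\fun - \tilde{\fun}| > \eps/2\}$. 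Taking outer measures, the first summand tends to $0$ by $\fun_k \convim{\me} \fun$ and the second is identically $0$ because $\fun - \tilde{\fun}$ is a \tnulfun{} and $\eps/2 > 0$; therefore $\oume{\tova{\me}}\{|\fun_k - \tilde{\fun}| > \eps\} \to 0$ for every $\eps$, i.e. $\fun_k \convim{\me} \tilde{\fun}$. Both halves thus collapse to the same subadditivity estimate, and no further ingredients are needed.
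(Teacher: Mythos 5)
Your proof is correct. The paper gives no argument of its own for this proposition---it is simply quoted from \cite[p. 92]{rao_theory_1983}---and your argument (the triangle-inequality inclusion together with monotonicity and finite subadditivity of $\oume{\tova{\me}}$, both derived directly from the infimum-over-covers definition so that no $\sigma$-additivity is ever invoked) is exactly the standard proof of that textbook statement, so there is nothing missing and nothing to contrast.
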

Now, the notion of measurability is introduced.
The definition is similar to the definition of $T_1$-measurability in \cite[p. 101]{rao_theory_1983}.
\begin{definition}
	Let $\dom \subset \Rn$ and $\al \subset \pos{\dom}$ be an \tal{} and
	$\me : \al \to \R$ be a \tme{}. A function $\fun : \dom \to \R$ is
	called \textbf{\tmefun{}}\index{measurable function@\tmefun{} function}
	if there exists a sequence $\{\simf_k\}_{k \in \N}$ of \tsimf{}
	functions $\simf_k : \dom \to \R$ such that
	\begin{equation*}
		\simf_k \convim{\me} \fun \,.
	\end{equation*}
\end{definition}
The integral for \tmefun{} functions can now be defined via $\lp^1$-Chauchy
sequences. This is of course well-defined (cf. \cite[p. 102]{rao_theory_1983}).
\begin{definition}
	Let $\dom \subset \Rn$, $\al \subset \pos{\dom}$ be an \tal{} and $\me:
	\al \to \R$ be a \tme{}. A function $\fun : \dom \to \R$ is said to be
	\textbf{\tIfun{}}\index{integrable function@\tIfun{} function} if there
	exists a sequence $\{\simf_k\}_{k\in \N}$ of \tIfun{} \tsimf{}
	functions $\simf_k : \dom \to \R$ such that
	\begin{enumerate}
		\item $\simf_k \convim{\me} \fun$.
		\item $\lim \limits_{k,l \to \infty} \I{\dom}{|\simf_k - \simf_l|}{|\me|} = 0$.
	\end{enumerate}
	In this case, denote
	\begin{equation*}
		\I{\dom}{\fun}{\me} := \lim\limits_{k \to \infty} \I{\dom}{\simf_k}{\me} \,.
		\nomenclature[i]{$\I{\dom}{\fun}{\me}$}{\tI{} of $\fun$ w.r.t. \tme{} $\me$}
	\end{equation*}
		The sequence $\{\simf_k\}_{k\in \N}$ is called
	\textbf{\tdetseq{}}\index{determining seq@\tdetseq{} of an \tIfun{}
	function} for the \tI{} of $\fun$. 
\end{definition}
\begin{remark}
	In particular, \tIfun{} functions are \tmefun{}.
	This notion of \tI{} is also called Daniell-Integral in the literature
	(cf. \cite{rao_theory_1983}).
\end{remark}
The $\lp^p$-spaces are defined in the usual way (cf. \cite[p. 121]{rao_theory_1983}).
\begin{definition}
	Let $\dom \subset \Rn$, $\al \subset \pos{\dom}$ be an \tal{}, $\me: \al \to \R$ be
	a measure and $p\in [1,\infty)$. Then the set of all \tmefun{} functions $\fun : \dom \to \R$ such that $|f|^p$ is $|\me|$-\tIfun{} is denoted by
	\begin{equation*}
		\Lpam{p}\,.
		\nomenclature[f]{$\Lpam{p}$}{$p$-\tIfun{} functions w.r.t to \tal{} $\al$ and \tme{} $\me$}
	\end{equation*}
	If $\al = \bor{\dom}$, write
	\begin{equation*}
		\Lpbm{p}\,.
		\nomenclature[f]{$\Lpbm{p}$}{$\Lpam{p}$ with $\al = \bor{\dom}$}
	\end{equation*}
	For $\fun_1, \fun_2 \in \Lpam{p}$
	\begin{equation*}
		\fun_1 = \fun_2 \mae{\me} 
	\end{equation*}
	defines an equivalence relation.
	The set of all equivalence classes of this relation is denoted by
	\begin{equation*}
		\lpam{p}\,.
		\nomenclature[f]{$\lpam{p}$}{Equivalence classes in $\Lpam{p}$}
	\end{equation*}
	If $\al = \bor{\dom}$, write
	\begin{equation*}
		\lpbm{p}\,.
		\nomenclature[f]{$\lpbm{p}$}{$\lpam{p}$ with $\al = \bor{\dom}$}
	\end{equation*}
\end{definition}
\begin{definition}
	Let $\dom \subset \Rn$, $\al \subset \pos{\dom}$ be an \tal{} and $\me: \al \to \R$ a \tme{}. Then for every $p \in [1,\infty)$ and $\fun \in \Lpam{p}$ write
	\begin{equation*}
		\norm{p}{\fun} := \left(\I{\dom}{|\fun|^p}{|\mu|}\right)^{\frac{1}{p}} \,.
		\nomenclature[n]{$\norm{p}{\fun}$}{$L^p$-norm of $\fun$}
	\end{equation*}
	Furthermore, for \tmefun{} $\fun : \dom \to \R$ define
	\begin{equation*}
		\essup{}{\fun} := \inf \left\{ \K \in \R \setpipe |\me|^*\left(\{\eR\in \dom | \fun(\eR) > \K\}\right) = 0\right\}
	\end{equation*}
	and
	\begin{equation*}
		\normi{\fun} := \essup{}{|\fun|} \,.
	\end{equation*}
	The set of all \tmefun{} functions $\fun : \dom \to \R$ such that
	\begin{equation*}
		\normi{\fun} < \infty
	\end{equation*}
	is denoted by
	\begin{equation*}
		\Liam \,.
	\end{equation*}
	As in the case $p\in [1,\infty)$, 
	\begin{equation*}
		\liam
	\end{equation*}
	denotes the set of all equivalence classes in $\Liam$ with respect to
	equality almost everywhere.

	In the case $\al = \bor{\dom}$, only write
	\begin{equation*}
		\Libm \text{ and } \libm \text{ respectively.}
	\end{equation*}
\end{definition}
The integral defined in this way shares many properties of the
Lebesgue-integral. The Hölder and Minkwoski inequality hold true. Furthermore,
dominated convergence is available when using convergence in \tme{} instead of
pointwise convergence (cf. \cite[p. 105ff]{rao_theory_1983}). 

Before proceeding to the characterisation of the dual of $\lp^\infty$, a new
integral symbol is introduced, which gives formulas traces
and integrals over \tpfa{} \tme{}s a more pleasing
shape.
\begin{definition}
	Let $\dom \subset \Rn$ be bounded and $\ferm \subset \cl{\dom}$ be
	closed. Then for every $\me \in \bawl{\dom}$ such that
	\begin{equation*}
		\cor{\me} \subset \ferm,
	\end{equation*}
	every $\fun \in \lpbm{1}$ and $\delta >0$ write
	\begin{equation*}
		\sI{\ferm}{\fun}{\me} :=
		\I{\dnhd{\ferm}{\delta}\cap\dom}{\fun}{\me} \,.
		\nomenclature[i]{$\sI{\ferm}{\fun}{\me}$}{$\I{\dnhd{\ferm}{\delta}}{\fun}{\me}$,
		where $\ferm = \cor{\me}$}
	\end{equation*}
\end{definition}
\begin{remark}
	This notion of integral is well-defined since the definition of
	$\cor{\me}$ yields
	\begin{equation*}
		\tova{\me}((\dnhd{\ferm}{\delta})^c) = 0 
	\end{equation*}
	for any $\delta >0$.
\end{remark}
The following proposition is a specialised version of the proposition from
\cite[p. 139]{rao_theory_1983} (cf. \cite[p. 53]{yosida_finitely_1951}).
\begin{proposition}\label{prop:dual_liss_pre}
	Let $\dom \subset \Rn$, $\sal \subset \pos{\dom}$ be a \tsal{} and $\sme : \sal \to \R$ be a \tsme{}.

	Then for every $\de \in \dual{\left( \liss \right)}$ there exists a unique $\me \in \basws$ such that
	\begin{equation*}
		\df{\de}{\fun} = \I{\dom}{\fun}{\me}
	\end{equation*}
	for every $\fun \in \liss$ and
	\begin{equation*}
		\norm{}{\de} = \norm{}{\me} = \tova{\me}(\dom) \,.
	\end{equation*}
	On the other hand, every $\me \in \basws$ defines $\de\in
	\dual{\liss}$.

	Hence, $\dual{\liss}$ and $\basws$ can be identified.
\end{proposition}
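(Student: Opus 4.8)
The plan is to exhibit explicit maps in both directions between $\dual{\liss}$ and $\basws$ and to check that they are mutually inverse and norm-preserving; this is the finitely additive incarnation of the classical identification of the dual of $L^\infty$ with the space of bounded charges that vanish on null sets. Since the general statement is available in \cite[p.\ 139]{rao_theory_1983}, the real task is to verify that its construction specialises correctly to the present setting and to keep track of where weak absolute continuity enters. Throughout I would use the elementary fact that a bounded $\sal$-measurable function is a uniform, hence $\convim{\me}$, limit of $\sal$-simple functions.

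First I would treat the direction $\basws \to \dual{\liss}$. Given $\me \in \basws$, set $\df{\de}{\fun} := \I{\dom}{\fun}{\me}$. Every $\fun \in \liss$ is $\me$-integrable because it is a uniform limit of $\sal$-simple functions, each of which is $\me$-integrable as $\me$ is bounded; linearity of $\de$ is inherited from the integral, and the estimate $|\I{\dom}{\fun}{\me}| \leq \normi{\fun}\,\tova{\me}(\dom)$ gives $\de \in \dual{\liss}$ with $\norm{}{\de} \leq \tova{\me}(\dom)$. The one step that genuinely needs $\me \wac \sme$ is well-definedness on $\mae{\sme}$-classes. If $\fun = 0 \mae{\sme}$ then, because $\sme$ is $\sigma$-additive on the \tsal{} $\sal$, the preceding remark provides $N \in \sal$ with $\tova{\sme}(N)=0$ outside of which $\fun$ vanishes; weak absolute continuity upgrades $\sme(\als)=0$ to $\me(\als)=0$ for every $\sal$-subset $\als$ of $N$, whence $\tova{\me}(N)=0$, and applying the estimate above to $\fun = \fun\chi_N$ forces $\I{\dom}{\fun}{\me}=0$.

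For the reverse direction I would, given $\de \in \dual{\liss}$, define $\me(\als) := \df{\de}{\chi_\als}$ for $\als \in \sal$. Finite additivity follows from $\chi_{\als_1 \cup \als_2} = \chi_{\als_1} + \chi_{\als_2}$ for disjoint sets together with linearity of $\de$; boundedness from $|\me(\als)| \leq \norm{}{\de}\,\normi{\chi_\als} \leq \norm{}{\de}$; and $\me \wac \sme$ from the fact that $\tova{\sme}(\als)=0$ makes $\chi_\als$ the zero class in $\liss$. The representation $\df{\de}{\fun} = \I{\dom}{\fun}{\me}$ holds for simple $\fun$ by linearity, and extends to all of $\liss$ because simple functions are $\normi{\cdot}$-dense in $\liss$ and both sides are $\normi{\cdot}$-continuous. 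Uniqueness is immediate, since any representing $\me$ must satisfy $\me(\als) = \df{\de}{\chi_\als}$.

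It remains to establish $\norm{}{\de} = \tova{\me}(\dom)$, which together with the non-localisability of null functions I expect to be the only delicate point. The bound $\norm{}{\de} \leq \tova{\me}(\dom)$ is the estimate from the first direction. For the reverse inequality I would, for each finite $\sal$-partition $\{\als_i\}$ of $\dom$, test $\de$ against $\fun := \sum_i \operatorname{sgn}\bigl(\me(\als_i)\bigr)\,\chi_{\als_i}$, which has $\normi{\fun}\leq 1$ and satisfies $\df{\de}{\fun} = \I{\dom}{\fun}{\me} = \sum_i |\me(\als_i)|$; taking the supremum over all such partitions yields $\norm{}{\de} \geq \tova{\me}(\dom)$. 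The subtlety underlying the whole argument is not any individual estimate but keeping the finitely additive framework honest: as the earlier remark shows, $\mae{\sme}$-equality need not localise to a genuine null set for arbitrary charges, so it is precisely the $\sigma$-additivity of $\sme$ on the \tsal{} $\sal$ that makes the null-function description usable and legitimises the passage between $\liss$-classes and $\me$-integration.
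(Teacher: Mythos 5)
Your proposal is correct, but note that the paper does not actually prove this proposition: it is introduced as a specialised version of the representation theorem in \cite[p. 139]{rao_theory_1983} (cf. \cite[p. 53]{yosida_finitely_1951}), and the citation stands in for the proof. What you give is a self-contained reconstruction of that standard argument, and it hits the right points: the correspondence $\me(\als) = \df{\de}{\ind{\als}}$ in one direction and $\df{\de}{\fun} = \I{\dom}{\fun}{\me}$ in the other, extension from \tsimf{} functions by density in the $\lp^\infty$-norm, uniqueness by testing on indicators, and the norm identity $\norm{}{\de} = \tova{\me}(\dom)$ obtained from signed indicator sums over finite $\sal$-partitions, which is exactly the partition description of the \ttova{}. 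Most usefully, you isolate the two hypotheses that make the identification work in the finitely additive setting: the $\sigma$-additivity of $\sme$ on the \tsal{} $\sal$ is what lets a $\sme$-\tnulfun{} be localised inside a genuine $\sme$-\tnulset{} in $\sal$ (the paper's own remark around Equation \eqref{eq:nullfunc} shows this localisation fails for general \tme{}s, e.g. the \tdme{} of Example \ref{ex:dzero}), and $\me \wac \sme$ is what transfers that \tnulset{} from $\sme$ to $\me$, giving well-definedness of $\I{\dom}{\fun}{\me}$ on $\sme$-equivalence classes. Your route buys the reader a self-contained proof with an explicit accounting of where each hypothesis enters; the paper's route buys brevity at the cost of outsourcing precisely these checks to the literature.
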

Using the decomposition Proposition \ref{prop:dec_pfa_wac} from above, one obtains a more refined characterisation of the dual of
$\liss$. In particular, every element of the dual space is the sum of a \tsme{}
with $\lem$-density and a \tpfa{} \tme{}. In contrast to the literature, this
makes the intuitive idea of the dual of $\lp^\infty$ being $\lp^1$ plus
something which is not \twac{} with respect to Lebesgue measure precise.
\begin{proposition}\label{thm:dual_liss}
	Let $\dom \subset \Rn$ and $\sal \subset \pos{\dom}$ be a \tsal{} and $\sme : \sal \to \R$ be a \tsme{}. Then for every $\dual{u} \in \dual{\liss}$ there exists a unique \tpfa{} $\me_p \in \basws$ and a unique $\dfun \in \lpss{1}$ such that
	\begin{equation*}
		\df{\dual{u}}{\fun} = \I{\dom}{\fun\dfun}{\lem} + \I{\dom}{\fun}{\me_p}
	\end{equation*}
	for every $\fun \in \liss$.
\end{proposition}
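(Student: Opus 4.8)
The plan is to combine the Riesz-type representation of $\dual{\liss}$ furnished by Proposition~\ref{prop:dual_liss_pre} with the pure/$\sigma$-additive splitting of Proposition~\ref{prop:dec_pfa_wac}, and then to identify the $\sigma$-additive piece with an $\lem$-density by means of the classical Radon--Nikodym theorem. In outline: Proposition~\ref{prop:dual_liss_pre} turns the abstract functional $\dual{u}$ into a concrete measure $\me \in \basws$; Proposition~\ref{prop:dec_pfa_wac} breaks $\me$ into a $\sigma$-additive part $\me_c$ and a pure part $\me_p$; and the $\sigma$-additive part, being weakly absolutely continuous, is in fact absolutely continuous and hence represented by a density $\dfun$.

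First I would invoke Proposition~\ref{prop:dual_liss_pre} to obtain the unique $\me \in \basws$ with $\df{\dual{u}}{\fun} = \I{\dom}{\fun}{\me}$ for all $\fun \in \liss$. Applying Proposition~\ref{prop:dec_pfa_wac} with $\metoo = \sme$ yields unique $\me_c \in \cas \cap \basws$ and a pure $\me_p \in \basws$ with $\me = \me_c + \me_p$. Since the integral is built from determining sequences of simple functions and simple functions integrate additively in the measure, I would then check that $\I{\dom}{\fun}{\me} = \I{\dom}{\fun}{\me_c} + \I{\dom}{\fun}{\me_p}$ for every $\fun \in \liss$: because $\tova{\me} = \tova{\me_c} + \tova{\me_p}$, a sequence determining $\I{\dom}{\fun}{\me}$ simultaneously determines the integrals against $\me_c$ and $\me_p$, so the splitting passes to the limit. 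This reduces the problem to rewriting the $\me_c$-integral.

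For the $\sigma$-additive part the key step is to upgrade weak absolute continuity to genuine absolute continuity. Because $\me_c$ and $\sme$ are both $\sigma$-additive and finite, the setwise implication $\tova{\sme}(\als) = 0 \implies \me_c(\als) = 0$ forces the $\eps$--$\delta$ condition, so that $\me_c \ac \sme$ and, using that $\sme$ is absolutely continuous with respect to $\lem$, also $\me_c \ac \lem$. The classical Radon--Nikodym theorem then supplies a unique $\dfun \in \lpss{1}$ with $\me_c(\als) = \I{\als}{\dfun}{\lem}$, and approximating $\fun \in \liss$ by simple functions gives $\I{\dom}{\fun}{\me_c} = \I{\dom}{\fun\dfun}{\lem}$, where on the right the Daniell integral against $\me_c$ collapses to the ordinary Lebesgue integral. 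Combining the displays yields the asserted formula, and uniqueness of $\dfun$ and $\me_p$ follows from the uniqueness clauses in Proposition~\ref{prop:dec_pfa_wac} together with that of the Radon--Nikodym density.

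The main obstacle I anticipate is precisely the transition from the finitely-additive notion $\me_c \wac \sme$ to a statement strong enough to run Radon--Nikodym and, beyond that, producing a density with respect to $\lem$ rather than $\sme$; this is where the relationship between $\sme$ and Lebesgue measure must be used, and where one must verify that the integral built from convergence in measure coincides with the Lebesgue integral once the measure is $\sigma$-additive. A secondary technical point is confirming that the additive splitting of $\me$ genuinely passes to the integral on all of $\liss$, and not merely on simple functions.
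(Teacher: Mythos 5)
The paper itself contains no argument for this proposition --- its ``proof'' is a citation of Theorem 4.14 in \cite[p.~21]{schonherr_pure_2017} --- so your attempt can only be compared against the natural intended argument. Your skeleton is exactly that argument: represent $\dual{u}$ by the unique $\me \in \basws$ of Proposition \ref{prop:dual_liss_pre}, split $\me = \me_c + \me_p$ by Proposition \ref{prop:dec_pfa_wac}, pass the splitting through the integral (legitimate, since orthogonality gives $\tova{\me} = \tova{\me_c} + \tova{\me_p}$, so a determining sequence for $\me$ determines both partial integrals), and represent the $\sigma$-additive part by a density. The upgrade from $\me_c \wac \sme$ to $\me_c \ac \sme$ is also correct, because both measures are bounded and $\sigma$-additive on the $\sigma$-algebra $\sal$, and your uniqueness argument (uniqueness in Proposition \ref{prop:dual_liss_pre}, in Proposition \ref{prop:dec_pfa_wac}, and of the Radon--Nikodym density) is the right one.

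The genuine gap is the clause ``using that $\sme$ is absolutely continuous with respect to $\lem$''. No such hypothesis exists: $\sme$ is an arbitrary bounded $\sigma$-additive measure on an arbitrary $\sigma$-algebra $\sal \subset \pos{\dom}$, with no stated relation to Lebesgue measure, and elements of $\sal$ need not even be Lebesgue measurable. What Radon--Nikodym yields from $\me_c \ac \sme$ is a density with respect to $\sme$, i.e. the formula $\df{\dual{u}}{\fun} = \I{\dom}{\fun\dfun}{\sme} + \I{\dom}{\fun}{\me_p}$, and at this level of generality that is the end of the road: the statement as literally written, with $\lem$, is false for general $\sme$. For instance, take $\sme$ to be the Dirac measure at a point $\eR_0 \in \dom$. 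Then every $\me \in \basws$ is a scalar multiple of that Dirac measure, hence $\sigma$-additive with vanishing pure part, while the functional $\fun \mapsto \fun(\eR_0)$ on $\liss$ admits no representation of the form $\fun \mapsto \I{\dom}{\fun\dfun}{\lem}$; indeed that expression is not even well defined on $\sme$-equivalence classes, since $\ind{\{\eR_0\}}$ and the constant function $1$ coincide in $\liss$ but give different values. So the $\lem$ in the display must be read either as a slip for $\sme$, or as reflecting the implicit intention $\sme = \reme{\lem}{\dom}$, which is how the result is used in the vicinity of Proposition \ref{prop:dual_libln}. Under either reading your argument closes (the offending step being deleted in the first case and trivial in the second), but as a proof of the literal statement it does not, and cannot, go through; a correct write-up should state this hypothesis explicitly rather than smuggle it in.
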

\begin{proof}
	See Theorem 4.14 in \cite[p. 21]{schonherr_pure_2017}.
\end{proof}
\begin{remark}
	Note that the $\lp$-space over a \tme{} $\me\geq 0$ is in general not
	complete. Nevertheless, the completion is known to be the set of all
	\tac{} \tme{}s whose $p$-norm is finite, i.e. all bounded \tme{}s
	$\metoo$ with $\metoo \ac \me$ and
	\begin{equation*}
		\lim\limits_{\partition \in \partitions} \sum
		\limits_{\substack{\parte\in \partition\\\me(\parte) \neq 0}} \left |
		\frac{\metoo(\parte)}{\me(\parte)}\right|^p \me(\parte) <
		\infty \,.
	\end{equation*}
	Here, the limit is taken over the directed set $\partitions$ of all partitions
	$\partition$ of $\dom$. See \cite[p. 185ff]{rao_theory_1983} for
	reference. Using the convention $\frac{0}{0} = 0$, this limit is the
	same as the \textit{refinement integral}
	\begin{equation*}
		\RI{\dom}{\left | \frac{\metoo}{\me}\right|^p \me}
	\end{equation*}
	as defined by Kolmogoroff in \cite{Kolmogoroff1930}.
\end{remark}
The following proposition states that every \tpfa{} \tme{} induces a \tRaM{} on
its \tcor{}.
\begin{proposition}\label{prop:ram_on_core}
	Let $\dom \in \bor{\Rn}$ be bounded and $\me \in \bawl{\dom}$. 
	Then there exists a Radon measure $\sme$ supported on $\cor{\me}\subset \cl{\dom}$ such that for every $\sfun \in \Cefun{\dom}$
	\begin{equation*}
		\I{\dom}{\sfun}{\me} = \I{\cor{\me}}{\sfun}{\sme} \,.
	\end{equation*}
\end{proposition}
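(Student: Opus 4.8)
The plan is to regard the assignment $\sfun \mapsto \I{\dom}{\sfun}{\me}$ as a bounded linear functional on the space $\Cefun{\dom}$ of continuous functions on the compact set $\cl{\dom}$, and to invoke the Riesz representation theorem. First I would note that since $\dom$ is bounded, $\cl{\dom}$ is compact, so every $\sfun \in \Cefun{\dom}$ is a bounded continuous function; being a uniform limit of \tsimf{} functions it is $\tova{\me}$-\tIfun{} because $\me$ is bounded. The functional $T(\sfun) := \I{\dom}{\sfun}{\me}$ is linear, and the elementary estimate
\begin{equation*}
	|T(\sfun)| \leq \I{\dom}{|\sfun|}{\tova{\me}} \leq \left(\sup_{\cl{\dom}} |\sfun|\right)\, \tova{\me}(\dom)
\end{equation*}
shows that $T$ is bounded with $\|T\| \leq \tova{\me}(\dom) < \infty$, using only the basic properties of the Daniell integral and the finiteness of the total variation of a bounded \tme{}.

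Next I would apply the Riesz representation theorem for the dual of $C(\cl{\dom})$, valid because $\cl{\dom}$ is compact, to obtain a unique signed Radon measure $\sme$ on $\cl{\dom}$ with
\begin{equation*}
	\I{\dom}{\sfun}{\me} = \I{\cl{\dom}}{\sfun}{\sme}
\end{equation*}
for every $\sfun \in \Cefun{\dom}$. It then remains to show that $\sme$ is concentrated on $\cor{\me}$; once this is established, integration over $\cl{\dom}$ coincides with integration over $\cor{\me}$ and the claimed identity follows.

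The main work is the localisation argument establishing $\operatorname{supp}\sme \subset \cor{\me}$. I would fix a point $\eR \in \cl{\dom} \setminus \cor{\me}$; by the very definition of the core there is an open $\nhd \subset \Rn$ with $\eR \in \nhd$ and $\tova{\me}(\nhd \cap \dom) = 0$. For any $\sfun \in \Cefun{\dom}$ whose support lies in $\nhd$, the restriction of $\sfun$ to $\dom$ vanishes outside $\nhd \cap \dom$, so
\begin{equation*}
	\left| \I{\cl{\dom}}{\sfun}{\sme} \right| = \left| \I{\dom}{\sfun}{\me} \right| = \left| \I{\dom}{\sfun}{\reme{\me}{\nhd \cap \dom}} \right| \leq \left( \sup_{\cl{\dom}} |\sfun| \right) \tova{\me}(\nhd \cap \dom) = 0 \,.
\end{equation*}
Since the Radon measure $\sme$ annihilates every continuous function supported in the relatively open set $\nhd \cap \cl{\dom}$, its restriction to that set vanishes, whence $\eR \notin \operatorname{supp}\sme$. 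As $\eR$ was arbitrary, this gives $\operatorname{supp}\sme \subset \cor{\me}$. The inclusion $\cor{\me} \subset \cl{\dom}$ required by the statement is immediate: if $\eR \notin \cl{\dom}$ then some ball $\ball{\eR}{\delta}$ misses $\dom$, so $\tova{\me}(\ball{\eR}{\delta} \cap \dom) = 0$ and $\eR \notin \cor{\me}$.

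The step I expect to be most delicate is this localisation. The two points needing care are, first, that a finitely additive integral really restricts correctly, i.e. that $\I{\dom}{\sfun}{\me} = \I{\dom}{\sfun}{\reme{\me}{\nhd\cap\dom}}$ whenever $\sfun$ vanishes outside $\nhd\cap\dom$, so that the vanishing total-variation mass genuinely kills the integral; and second, that a $\sigma$-additive Radon measure on the compact metric space $\cl{\dom}$ which integrates to zero against every continuous test function supported in an open set must vanish on that open set, which is where the standard regularity of Radon measures and Urysohn-type constructions enter.
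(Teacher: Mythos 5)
Your proposal is correct and is, in essence, the expected argument: the paper itself gives no internal proof of this proposition but defers to Proposition 5.24 in \cite{schonherr_pure_2017}, and your route---viewing $\sfun \mapsto \I{\dom}{\sfun}{\me}$ as a bounded linear functional on $\Cefun{\dom} \cong C(\cl{\dom})$, invoking the Riesz representation theorem, and then localising $\supp{\sme}$ inside $\cor{\me}$ via the definition of the core, the restriction property of the finitely additive integral, and the variational characterisation of the total variation of a Radon measure---is the natural self-contained proof, with both of the delicate points you flag handled correctly. The only point worth making explicit is that the representing measure is in general a \emph{signed} Radon measure (unavoidable, since $\me \in \bawl{\dom}$ is signed), which is consistent with the paper's usage of Radon measures elsewhere, e.g.\ the appearance of $\tova{\sme}$ in Proposition \ref{prop:smearing}.
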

\begin{proof}
	See Proposition 5.24 in \cite[p. 36]{schonherr_pure_2017}.	
\end{proof}
\begin{remark}
	In the setting of the proposition above, $\sme$ is said to be a
	\textbf{representation of $\me$} on $\cor{\me}$.
\end{remark}
The next proposition gives a partial inverse to the statement of the
proposition above. In particular, any \tRaM{} can be extended to a \tme{} on
all of its domain.
\begin{proposition}\label{prop:smearing}
	Let $\dom \in \bor{\Rn}$ be bounded and $\ferm \subset \cl{\dom}$ be
	closed such
	that for every $\eR \in \ferm$ and every $\delta > 0$
	\begin{equation*}
		\lem(\ball{\eR}{\delta} \cap \dom) > 0 \,.
	\end{equation*}
	Furthermore, let $\sme$ be a Radon measure on $\ferm$. Then there
	exists $\me \in \bawl{\dom}$ such that for every $\sfun \in
	\Cefun{\dom}$
	\begin{equation*}
		\I{\dom}{\sfun}{\me} = \I{\ferm}{\sfun}{\sme} \,.
	\end{equation*}
	In particular, 
	\begin{equation*}
		\cor{\me} \subset \ferm 
	\end{equation*}
	and
	\begin{equation*}
		\tova{\me}(\dom) = \tova{\sme}(\ferm) \,.
	\end{equation*}
\end{proposition}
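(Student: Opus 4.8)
The plan is to realise $\me$ as a weak-$*$ limit of genuine $\sigma$-additive measures obtained by spreading the mass of $\sme$ into $\dom$ along small balls, and then to read off the three assertions from the construction. For each $\delta > 0$ I define a set function $\me_\delta$ on $\bor{\dom}$ by
\begin{equation*}
	\me_\delta(\bals) := \I{\ferm}{\frac{\lem(\bals \cap \ball{\eR}{\delta}\cap\dom)}{\lem(\ball{\eR}{\delta}\cap\dom)}}{\sme(\eR)} \,.
\end{equation*}
The hypothesis $\lem(\ball{\eR}{\delta}\cap\dom) > 0$ for $\eR \in \ferm$ makes the integrand a well-defined measurable function of $\eR$ bounded by $1$, so $\me_\delta$ is finite. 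Countable additivity of $\me_\delta$ follows from that of $\lem$ together with dominated convergence (the integrands are dominated by $1$ and $\sme$ is finite), and $\me_\delta \wac \lem$ because $\lem(\bals) = 0$ forces the integrand to vanish identically. Testing against $g$ with $\normi{g} \le 1$ and using Fubini gives $\tova{\me_\delta}(\dom) \le \tova{\sme}(\ferm)$, so each $\me_\delta$ is a $\sigma$-additive element of $\bawl{\dom}$ of norm at most $\tova{\sme}(\ferm)$.

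Next I would pass to the limit $\delta \downarrow 0$. By Proposition \ref{prop:dual_liss_pre}, applied with the \tsal{} $\bor{\dom}$ and the \tsme{} $\lem$, the space $\bawl{\dom}$ is the dual of $\libm$; the net $\{\me_\delta\}_{\delta > 0}$ lies in the ball of radius $\tova{\sme}(\ferm)$, which is weak-$*$ compact by Banach--Alaoglu. Let $\me \in \bawl{\dom}$ be a weak-$*$ cluster point, realised along a subnet $\me_{\delta_\alpha} \to \me$. The reason for using continuous test functions is that for $\sfun \in \Cefun{\dom}$ the inner average $\frac{1}{\lem(\ball{\eR}{\delta}\cap\dom)}\I{\ball{\eR}{\delta}\cap\dom}{\sfun}{\lem}$ tends to $\sfun(\eR)$ uniformly on the compact set $\ferm$ as $\delta \downarrow 0$, so after Fubini the \emph{ordinary} limit $\I{\dom}{\sfun}{\me_\delta} \to \I{\ferm}{\sfun}{\sme}$ holds along the full net. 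Evaluating the cluster point then yields $\I{\dom}{\sfun}{\me} = \I{\ferm}{\sfun}{\sme}$ for every $\sfun \in \Cefun{\dom}$, which is the required identity.

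It remains to deduce the two refinements. For $\cor{\me} \subset \ferm$, take $\eR \notin \ferm$; as $\ferm$ is closed there is $\delta_0 > 0$ with $\ball{\eR}{\delta_0}\cap\ferm = \emptyset$, and for every $\delta < \delta_0/2$ the smearing balls $\ball{\eR'}{\delta}$ with $\eR' \in \ferm$ are disjoint from $\ball{\eR}{\delta_0/2}$, whence $\me_\delta(\bals) = 0$ for every Borel $\bals \subset \ball{\eR}{\delta_0/2}\cap\dom$. The subnet limit inherits $\me(\bals) = 0$ for all such $\bals$, so $\tova{\me}(\ball{\eR}{\delta_0/2}\cap\dom) = 0$ and $\eR \notin \cor{\me}$. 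For the total variation, the uniform bound already yields $\tova{\me}(\dom) \le \tova{\sme}(\ferm)$; for the reverse inequality I would extend, via Tietze's theorem, any continuous function on the compact set $\ferm$ with sup-norm at most $1$ to some $\sfun \in \Cefun{\dom}$ with $\normi{\sfun} \le 1$, and take the supremum of $\I{\ferm}{\sfun}{\sme} = \I{\dom}{\sfun}{\me}$ over all such $\sfun$: the left-hand side recovers $\tova{\sme}(\ferm)$ by the Riesz representation of $\sme$, while the right-hand side is bounded by $\tova{\me}(\dom)$.

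The step I expect to be the main obstacle is pinning down the \emph{exact} total-variation equality: the weak-$*$ limit is only lower semicontinuous in norm, so the lower bound $\tova{\me}(\dom) \ge \tova{\sme}(\ferm)$ cannot be read off from the approximants and must instead be extracted from the continuous-function identity together with the Tietze extension and the compactness of $\ferm$. Confirming that no mass of $\sme$ carried on $\bd{\dom}$ is lost in this duality — so that $\Cefun{\dom}$ genuinely separates the mass of $\sme$ — is the delicate point of the argument.
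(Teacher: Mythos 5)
Your proof is correct, but note that it cannot be compared line by line with the paper, because the paper gives no argument at all for this proposition: it simply cites Proposition 5.26 of \cite{schonherr_pure_2017}. Your smearing construction is thus a genuinely self-contained alternative, and it holds together at every step: the kernel $\eR \mapsto \lem(\bals\cap\ball{\eR}{\delta}\cap\dom)/\lem(\ball{\eR}{\delta}\cap\dom)$ is even continuous in $\eR$ (numerator and denominator change by at most $\lem$ of a symmetric difference of balls), so each $\me_\delta$ is a well-defined $\sigma$-additive measure with $\lp^1$-density and, by Fubini, total variation at most $\tova{\sme}(\ferm)$; Proposition \ref{prop:dual_liss_pre} identifies $\bawl{\dom}$ with the dual of $\lp^\infty$ taken with respect to $\bor{\dom}$ and $\lem$, and since that space is non-separable you are right to extract a weak-$*$ convergent subnet rather than a subsequence; uniform continuity of $\sfun\in\Cefun{\dom}$ on the compact set $\cl{\dom}$ gives the full-net limit $\I{\dom}{\sfun}{\me_\delta}\to\I{\ferm}{\sfun}{\sme}$, so every cluster point satisfies the trace identity; the core localisation is legitimate because weak-$*$ convergence tested on indicators gives $\me(\bals)=\lim_\alpha\me_{\delta_\alpha}(\bals)=0$ for every Borel $\bals\subset\ball{\eR}{\delta_0/2}\cap\dom$ (subnets are eventually in every tail, so eventually $\delta_\alpha<\delta_0/2$), and a finitely additive measure vanishing on all Borel subsets of a set has zero variation there; finally the norm identity follows from weak-$*$ lower semicontinuity of the dual norm (the inequality $\leq$) combined with Tietze extension and the Riesz duality between continuous functions and finite Radon measures on the compact set $\ferm$ (the inequality $\geq$). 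The delicate point you flag at the end is in fact already resolved by your own argument: the standing hypothesis $\lem(\ball{\eR}{\delta}\cap\dom)>0$ for $\eR\in\ferm$ is exactly what guarantees that the values of a continuous function on $\ferm$ are visible to $\lp^\infty(\lem)$-duality, so no mass of $\sme$ on $\bd{\dom}$ can be lost. What your route buys, compared with the opaque citation, is an explicit exhibition of $\me$ as a weak-$*$ limit of honest $\sigma$-additive, $\lem$-absolutely continuous approximants, at the price of invoking Banach--Alaoglu and net convergence; an alternative organisation of the same ingredients (closer in spirit to the Hahn--Banach arguments used elsewhere in the paper, e.g.\ in Theorem \ref{thm:ggt_me}) would be to extend the functional $\sfun\mapsto\I{\ferm}{\sfun}{\sme}$, defined on the subspace of continuous elements of $\lp^\infty$, by Hahn--Banach, which yields the norm equality directly but makes the core localisation the step requiring extra work.
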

\begin{remark}
	The conditions of the statement are satisfied if, for example,  $\ferm \subset \mbd{\dom} \cup \mint{\dom}$.	
\end{remark}
\begin{proof}
	See Proposition 5.26 in \cite[p. 37]{schonherr_pure_2017}.	
\end{proof}
The \tme{} from the preceding proposition is \tpfa{} if the \tRaM{} is singular
with respect to Lebesgue measure.
\begin{corollary}
	Let $\dom \in \bor{\Rn}$ be bounded and $\ferm \subset \cl{\dom}$ be
	closed such
	that for every $\eR \in \ferm$ and $\delta >0$
	\begin{equation*}
		\lem(\ball{\eR}{\delta} \cap \dom) >0
	\end{equation*}
	and 
	\begin{equation*}
		\lem(\ferm \cap \dom) = 0\,.
	\end{equation*}
	Furthermore, let $\sme$ be a \tRaM{} on $\ferm$.

	Then there exists $\me \in
	\bawl{\dom}$ such that for all $\sfun \in \Ccfun{\dom}$ 
	\begin{equation*}
		\I{\dom}{\sfun}{\me} = \I{\ferm}{\sfun}{\sme}\,.
	\end{equation*}
	Furthermore, 
	\begin{equation*}
		\tova{\me}(\dom) = \tova{\sme}(\ferm)
	\end{equation*}
	and $\me$ is \tpfa{}.
\end{corollary}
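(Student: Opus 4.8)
The plan is to obtain the corollary as a direct specialisation of Proposition \ref{prop:smearing}, supplemented by the purity criterion of Proposition \ref{prop:corenul_pfa}. First I would check that the hypotheses of Proposition \ref{prop:smearing} are in force: $\dom \in \bor{\Rn}$ is bounded, $\ferm \subset \cl{\dom}$ is closed, and the density condition $\lem(\ball{\eR}{\delta} \cap \dom) > 0$ holds for every $\eR \in \ferm$ and every $\delta > 0$. These are exactly the standing assumptions of the corollary. Applying Proposition \ref{prop:smearing} to the given \tRaM{} $\sme$ therefore yields a measure $\me \in \bawl{\dom}$ with $\cor{\me} \subset \ferm$ such that $\I{\dom}{\sfun}{\me} = \I{\ferm}{\sfun}{\sme}$ for every $\sfun \in \Cefun{\dom}$, together with the total-variation identity $\tova{\me}(\dom) = \tova{\sme}(\ferm)$. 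The latter is already one of the assertions to be proved.

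Next I would transfer the integral identity to the smaller test class. Since every compactly supported continuous function on $\dom$ extends continuously to $\cl{\dom}$ by setting it to zero off its support, one has $\Ccfun{\dom} \subset \Cefun{\dom}$, so the representation $\I{\dom}{\sfun}{\me} = \I{\ferm}{\sfun}{\sme}$ holds a fortiori for all $\sfun \in \Ccfun{\dom}$. This establishes the displayed integral formula of the corollary. Thus existence, the integral identity, and the mass equality are all inherited verbatim from Proposition \ref{prop:smearing}; the only genuinely new claim is that $\me$ is \tpfa{}.

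For purity I would invoke Proposition \ref{prop:corenul_pfa}, whose hypothesis $\me \in \bawl{\dom}$ is already satisfied. It remains to verify that $\cor{\me} \cap \dom$ is a $\lem$-\tnulset{}. This is precisely where the additional assumption $\lem(\ferm \cap \dom) = 0$ of the corollary is used: combining it with the inclusion $\cor{\me} \subset \ferm$ supplied by Proposition \ref{prop:smearing} gives $\cor{\me} \cap \dom \subset \ferm \cap \dom$, whence $\lem(\cor{\me} \cap \dom) \le \lem(\ferm \cap \dom) = 0$. Proposition \ref{prop:corenul_pfa} then yields that $\me$ is \tpfa{}, completing the argument.

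There is no substantial obstacle here, as the corollary is essentially an assembly of two earlier results, with the singularity hypothesis $\lem(\ferm \cap \dom) = 0$ tailored precisely to force the core of $\me$ into a Lebesgue-null set. The only points demanding a line of care are that $\cor{\me}$ is closed (immediate from its definition, its complement being a union of open sets on which $\tova{\me}$ vanishes), so that $\cor{\me} \cap \dom$ is Borel and the monotonicity estimate $\lem(\cor{\me} \cap \dom) \le \lem(\ferm \cap \dom)$ is legitimate, together with the verification of the inclusion $\Ccfun{\dom} \subset \Cefun{\dom}$; neither is more than routine.
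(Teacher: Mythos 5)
Your proposal is correct and follows exactly the paper's own argument: the paper proves this corollary by citing the preceding Proposition \ref{prop:smearing} together with Proposition \ref{prop:corenul_pfa}, which is precisely your assembly (smearing gives existence, the integral identity, the mass equality, and $\cor{\me} \subset \ferm$; the hypothesis $\lem(\ferm \cap \dom) = 0$ then makes Proposition \ref{prop:corenul_pfa} applicable). Your additional remarks on $\Ccfun{\dom} \subset \Cefun{\dom}$ and the closedness of the core merely fill in routine details the paper leaves implicit.
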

\begin{proof}
	The preceding proposition and Proposition \ref{prop:corenul_pfa} yield
	the statement.
\end{proof}
The following example presents another way to construct a density at zero.
\begin{example}
	Let $\dom\in \bor{\Rn}$ be bounded and $\eR \in \cl{\dom}$ such that for every
	$\delta >0$ 
	\begin{equation*}
		\lem(\ball{\eR}{\delta} \cap \dom) > 0 \,.
	\end{equation*}
	Then there exists a \tpfa{} $\me \in \bawl{\dom}$ such that for every
	$\sfun \in \Cefun{\dom}$
	\begin{equation*}
		\I{\dom}{\sfun}{\me} = \sfun(\eR) \,.
	\end{equation*}
\end{example}
The next example shows an extension for $\ham^{n-1}$.
\begin{example}
	Let $\dom \in \bor{\Rn}$ be open, bounded and have smooth boundary. Then $\lem(\bd{\dom}) =
	0$ and $\ferm= \bd{\dom}$ satisfies the assumptions of
	Proposition \ref{prop:smearing}.
	Hence, there exists $\me \in \bawl{\dom}$ such that for all $\sfun \in
	\Cefun{\dom}$
	\begin{equation*}
		\I{\bd{\dom}}{\sfun}{\ham^{n-1}} = \I{\dom}{\sfun}{\me} \,.
	\end{equation*}
\end{example}
The following example shows, that the surface part of a Gauß formula can be
expressed as an integral with respect to a \tpfa{} measure. In Section
\ref{sec:nt_dmi} this is extended to vector fields having divergence
measure.
\begin{example}
	Let $\dom \in \bor{\Rn}$ be a bounded set with smooth boundary. Then $\ferm = \bd{\dom} \subset
	\cl{\dom}$ is a closed set and for every $k \in \N$ such that $1 \leq
	k \leq n$
	\begin{equation*}
		\bvnu{k} \cdot \reme{\ham^{n-1}}{\bd{\dom}}
	\end{equation*}
	is a \tRaM{} on $\ferm$. By Proposition \ref{prop:smearing} there
	exists $\me_k \in \bawl{\dom}$ such that for every $\sfun \in
	\Cefun{\dom}$
	\begin{equation*}
		\I{\bd{\dom}}{\sfun \cdot \bvnu{k}}{\ham^{n-1}} =
		\I{\dom}{\sfun}{\me_k} = \sI{\bd{\dom}}{\sfun}{\me_k} 
	\end{equation*}
	and 
	\begin{equation*}
		\cor{\me_k} \subset \bd{\dom} \,.
	\end{equation*}
	Hence, there exists $\me \in \bawln{\dom}$ such that for all $\sfun \in
	\ConeN{\cl{\dom}}$
	\begin{equation*}
		\sI{\bd{\dom}}{\sfun}{\me} = \I{\dom}{\sfun}{\me} = \I{\bd{\dom}}{\sfun
		\cdot \normal{}}{\ham^{n-1}} =
			\I{\dom}{\divv{\sfun}}{\lem} \,,
	\end{equation*}
	where the Gauß formula for sets with finite perimeter from Evans \cite[p. 209]{Evans1992} was used.
	Furthermore,
	\begin{equation*}
		\cor{\me} \subset \bd{\dom}
	\end{equation*}
	and $\me$ is \tpfa{} by
	Proposition \ref{prop:corenul_pfa}.
\end{example}
\section{Bounded Vector Fields having Divergence Measure}\label{sec:nt_dmi}

The following lemma enables the use of the characterisation of the dual of
$L^\infty$ in the following theorem. The key point of this statement is that
the dual of a product space is essentially the product of the dual spaces.

Nevertheless, a self-contained proof is given.
\begin{proposition}\label{prop:dual_libln}
	Let $\dom \subset \Rn$. The dual space of
	\begin{equation*}
		\libln
	\end{equation*}
	equipped with the norm
	\begin{equation*}
		\norm{}{\funv} := \sup \limits_{\substack{k \in \N\\ 1 \leq k
			\leq n}} \normi{\funv_k} \mspace{6mu} \text{ for }
			\mspace{6mu}\funv \in \libln
	\end{equation*}
	is the space
	\begin{equation*}
		\bawln{\dom}
	\end{equation*}
	equipped with the norm
	\begin{equation*}
		\norm{}{\norme} = \sum \limits_{k=1}^n |\norme_k|(\dom) \text{ for } \norme \in \bawln{\dom} \,.
	\end{equation*}
\end{proposition}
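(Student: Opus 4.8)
The plan is to reduce this vector-valued duality to the scalar duality of Proposition \ref{prop:dual_liss_pre}, exploiting the fact that the dual of a finite product of Banach spaces carrying the sup norm is the product of the duals carrying the sum norm.

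First I would extract the components of a given functional. For $\de \in \dual{\libln}$ and each $k$ with $1 \le k \le n$, define a scalar functional $\de_k$ on $\liss$ by letting $\df{\de_k}{\fun}$ be the value of $\de$ on the vector field $(0,\dots,\fun,\dots,0)$ whose $k$-th slot is $\fun$ and whose remaining slots vanish. Since $\norm{}{(0,\dots,\fun,\dots,0)} = \normi{\fun}$, this single-slot embedding is an isometry of $(\liss,\normi{\cdot})$ into $\libln$, so each $\de_k$ is bounded. Proposition \ref{prop:dual_liss_pre} then furnishes a unique $\norme_k \in \basws$ with $\df{\de_k}{\fun} = \I{\dom}{\fun}{\norme_k}$ for all $\fun \in \liss$ and $\norm{}{\de_k} = \tova{\norme_k}(\dom)$; in particular each $\norme_k$ is automatically weakly absolutely continuous. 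Decomposing an arbitrary $\funv \in \libln$ as the sum of its single-slot pieces and using linearity of $\de$ gives the representation $\df{\de}{\funv} = \sum_{k=1}^n \I{\dom}{\funv_k}{\norme_k}$, so that $\norme := (\norme_1,\dots,\norme_n) \in \bawln{\dom}$ represents $\de$. The converse, that every such $\norme$ induces a bounded functional through this formula, is immediate, and uniqueness descends directly from the scalar case.

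It then remains to identify the norms. The upper bound $\norm{}{\de} \le \sum_{k=1}^n \tova{\norme_k}(\dom)$ follows from the estimate $|\df{\de}{\funv}| \le \sum_k \normi{\funv_k}\,\tova{\norme_k}(\dom) \le \norm{}{\funv}\sum_k \tova{\norme_k}(\dom)$. For the reverse inequality I would invoke the scalar norm identity $\norm{}{\de_k} = \tova{\norme_k}(\dom)$: given $\eps > 0$, choose for each $k$ a function $\fun_k \in \liss$ with $\normi{\fun_k} \le 1$ and $\I{\dom}{\fun_k}{\norme_k} \ge \tova{\norme_k}(\dom) - \eps/n$, and set $\funv := (\fun_1,\dots,\fun_n)$. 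Because the product carries the sup norm, $\norm{}{\funv} = \sup_k \normi{\fun_k} \le 1$, while $\df{\de}{\funv} = \sum_k \I{\dom}{\fun_k}{\norme_k} \ge \sum_k \tova{\norme_k}(\dom) - \eps$; letting $\eps \downarrow 0$ yields the matching lower bound.

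I do not anticipate a genuine obstacle, since everything reduces componentwise to Proposition \ref{prop:dual_liss_pre}. The one point demanding care is the interplay of the two norms: it is precisely because the sup norm on the product dualises to the sum norm on the dual product that the near-optimal test functions may be chosen independently in each slot and assembled at no cost, which is what makes the lower bound go through.
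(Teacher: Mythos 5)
Your proposal is correct and follows essentially the same route as the paper: componentwise reduction via the single-slot embeddings $\fun \mapsto \fun e_k$ to the scalar duality of Proposition \ref{prop:dual_liss_pre}, followed by the observation that the sup norm on the product dualises to the sum norm. The only cosmetic difference is that you justify the norm lower bound with an explicit $\eps$-selection of near-optimal test functions in each slot, whereas the paper directly interchanges the supremum and the sum (which is valid for the same reason: the constraints decouple componentwise).
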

\begin{proof}
	Let $\norme \in \bawln{\dom}$. Then $\de : \libln \to \R$ defined by
	\begin{equation*}
		\df{\de}{\funv} = \sum \limits_{k=1}^n \I{\dom}{\funv_k}{\norme_k}	
	\end{equation*}
	for $\funv \in \libln$ is obviously a linear functional on $\libln$.
	Furthermore
	\begin{equation*}
		\left |\df{\de}{\funv}\right| \leq \sum \limits_{k=1}^n \normi{\funv_k}\tova{\norme_k}(\dom)\leq \norm{}{\funv}\norm{}{\norme}
	\end{equation*}
	for $\funv \in \libln$, where the norms are defined as in the statement of the proposition.

	Now let $\de \in \dual{\libln}$. Then for every $k \in \N, 1 \leq k \leq n$
	\begin{equation*}
		\de_k : \libl \to \R : \fun \mapsto \df{\de}{\fun e_k}
	\end{equation*}
	is a continuous linear functional on $\libl$. By Proposition
	\ref{prop:dual_liss_pre} there exist $\norme_k \in \bawl{\dom}$ such that
	\begin{equation*}
		\df{\de_k}{\fun} = \I{\dom}{\fun}{\norme_k}
	\end{equation*}
	for every $\fun \in \libl$. Hence for every $\funv \in \libln$
	\begin{equation*}
		\df{\de}{\funv} = \sum \limits_{k=1}^n \df{\de_k}{\funv_k} = \sum \limits_{k=1}^n \I{\dom}{\funv_k}{\norme_k} = \I{\dom}{\funv}{\norme} \,,
	\end{equation*}
	where $\norme = (\norme_1, ..., \norme_n)$.

	Now let $\norme \in \bawln{\dom}$. Then
	\begin{align*}
		\sup\limits_{\substack{\funv\in \libln\\\norm{}{\funv}\leq 1}} \left |\I{\dom}{\funv}{\norme} \right |& = \sup\limits_{\substack{\funv\in \libln\\\norm{}{\funv}\leq 1}} \I{\dom}{\funv}{\norme} \\
		&= \sup \limits_{\substack{\funv\in \libln\\\norm{}{\funv}\leq 1}} \sum \limits_{k=1}^n \I{\dom}{\funv_k}{\norme_k} \\
		& = \sum \limits_{k=1}^n \sup \limits_{\substack{\funv_k\in \libl\\ \normi{\funv_k} \leq 1}} \I{\dom}{\funv_k}{\norme_k} \\
		& = \sum \limits_{k=1}^n \tova{\norme_k}(\dom)= \norm{}{\norme}\,.
	\end{align*}
	This finishes the proof.
\end{proof}
The proof of the upcoming Gauß Theorem relies on the following notion of
approximation of the domain $\dom$. It turns out that this is not only a
technical necessity but gives the obtained Gauß formulas a more flexible shape.
\begin{definition}
	Let $\encl{\dom} \subset \Rn$ be open and $\dom \in \bor{\encl{\dom}}$
	with $\distf{\bd{\encl{\dom}}}(\dom) > 0$. 
	
	A sequence $\{\nap_k\}_{k\in\N}\subset
	\skpdr{1}{\infty}{\encl{\dom}}{[0,1]}$ of \tLcont{} real functions with
	compact support in $\encl{\dom}$ is called \textbf{\tnormapprox{} for
		$\ind{\dom}$}\index{good
		app@\tnormapprox{}}\nomenclature[f]{$\nap_k$}{\tnormapprox{} of
	the characteristic function} with limit function $\nap$, if 
	\begin{enumerate}
		\item 	
			\begin{equation*}
				\lim\limits_{k \to \infty} \nap_k(\eR) =:
				\nap(x) \mspace{6mu}\text{ exists $\ham^{n-1}$-\tmae{} on }
				\encl{\dom}
			\end{equation*}
		\item
			\begin{equation*}
				\nap = 1 \mspace{12mu}\text{$\ham^{n-1}$-\tmae{} on } \Int{\dom}
			\end{equation*}
		\item 
			\begin{equation*}
				\nap = 0
				\mspace{12mu}\text{$\ham^{n-1}$-\tmae{} on }
				\left(\cl{\dom}\right)^c
			\end{equation*}
		\item 
			\begin{equation*}\sup\limits_{k \in \N}  \norm{1}{\Deriv{\nap_k}} <
			\infty\,.
			\end{equation*}
	\end{enumerate}
\end{definition}
A necessary condition for $\dom$ to allow a \tnormapprox{} is given in the next
proposition.
\begin{proposition}
	Let $\encl{\dom}\subset \Rn$ be open and $\dom \in \bor{\encl{\dom}}$
	be bounded such that $\dist{\dom}{\bd{\encl{\dom}}} >0$. If there is a
	\tnormapprox{} for $\ind{\dom}$  with
	$\norm{1}{\nap - \ind{\dom}} = 0$. Then $\dom$ is a \tbvs{}.
\end{proposition}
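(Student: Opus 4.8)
The plan is to show directly that $\ind{\dom}$ has bounded variation, i.e. that
\[
	\sup\left\{ \I{\encl{\dom}}{\ind{\dom}\,\divv{\varphi}}{\lem} \right\} < \infty,
\]
the supremum being taken over all continuously differentiable $\varphi : \encl{\dom} \to \Rn$ with compact support in $\encl{\dom}$ and $\sup|\varphi| \le 1$; this finiteness is exactly the statement that $\dom$ is a \tbvs{}. Since $\dom$ is bounded with $\dist{\dom}{\bd{\encl{\dom}}} > 0$, the closure $\cl{\dom}$ is a compact subset of $\encl{\dom}$, so a cut-off argument identifies the quantity above with the perimeter computed using test fields on all of $\Rn$. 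The idea is then to recognise the $\nap_k$ as a $W^{1,1}$ approximating sequence of $\ind{\dom}$ with uniformly bounded gradient norms and to pass to the limit in the integration-by-parts formula.

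First I would upgrade the mode of convergence. A set of vanishing $\ham^{n-1}$-measure has vanishing $n$-dimensional Hausdorff measure and hence vanishing Lebesgue measure, so the pointwise convergence $\nap_k \to \nap$ holding $\ham^{n-1}$-\tmae{} on $\encl{\dom}$ also holds $\lem$-\tmae{}. The hypothesis $\norm{1}{\nap - \ind{\dom}} = 0$ forces $\nap = \ind{\dom}$ $\lem$-\tmae{}, so altogether $\nap_k \to \ind{\dom}$ pointwise $\lem$-\tmae{} on $\encl{\dom}$.

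Next, fixing an admissible test field $\varphi$ with compact support $K \subset \encl{\dom}$ and using $0 \le \nap_k \le 1$, the bound $|\nap_k \, \divv{\varphi}| \le \sup|\divv{\varphi}|$ holds on the finite-measure set $K$, so dominated convergence gives
\[
	\I{\encl{\dom}}{\ind{\dom}\,\divv{\varphi}}{\lem} = \lim_{k\to\infty} \I{\encl{\dom}}{\nap_k\,\divv{\varphi}}{\lem}.
\]
Each $\nap_k$ is Lipschitz with compact support in $\encl{\dom}$, hence lies in $W^{1,1}$, so integration by parts is justified and
\[
	\I{\encl{\dom}}{\nap_k\,\divv{\varphi}}{\lem} = -\I{\encl{\dom}}{\Deriv{\nap_k}\cdot\varphi}{\lem} \le \norm{1}{\Deriv{\nap_k}} \le \sup_{j\in\N}\norm{1}{\Deriv{\nap_j}}.
\]
Letting $k\to\infty$ and then taking the supremum over all admissible $\varphi$ bounds the variation by the finite constant $\sup_{j\in\N}\norm{1}{\Deriv{\nap_j}}$ supplied by condition (4) in the definition of a \tnormapprox{}, which completes the argument.

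I expect the genuinely delicate points to be the two measure-theoretic reductions: passing from the hypotheses stated $\ham^{n-1}$-\tmae{} to statements usable for Lebesgue integration, and reducing test fields on $\Rn$ to ones compactly supported in $\encl{\dom}$ by exploiting $\dist{\dom}{\bd{\encl{\dom}}} > 0$. Once these are secured, the integration-by-parts step and the passage to the limit are routine, and the uniform gradient bound does the rest.
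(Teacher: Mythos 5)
Your proof is correct and takes essentially the same route as the paper: the paper passes from the $\ham^{n-1}$-\tmae{} hypotheses and $\norm{1}{\nap - \ind{\dom}} = 0$ to $L^1$-convergence $\nap_k \to \ind{\dom}$ and then cites lower semicontinuity of the total variation, whereas your dominated-convergence-plus-integration-by-parts argument is precisely the standard proof of that lower semicontinuity, written out by hand. Both arguments hinge on the same two ingredients, namely the identification of $\nap$ with $\ind{\dom}$ up to a $\lem$-\tnulset{} and the uniform bound $\sup_{k\in\N}\norm{1}{\Deriv{\nap_k}} < \infty$ from the definition of a \tnormapprox{}.
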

\begin{proof}
	Let $\{\nap_k\}_{k\in \N}$ be a \tnormapprox{} for $\ind{\dom}$. Since $\dom$ is bounded and every $\ham^{n-1}$-\tnulset{} is a $\lem$-\tnulset{},
	\begin{equation*}
		\nap_k \xrightarrow{L^1} \ind{\dom} \,.
	\end{equation*}
	Since the \ttova{} is \tlsc{},
	\begin{equation*}
		\tova{\Deriv{\ind{\dom}}}(\encl{\dom}) \leq \liminf\limits_{k
		\to \infty} \norm{1}{\Deriv{\nap_k}} < \infty \,.
	\end{equation*}
	This proves the statement.
\end{proof}
\begin{remark}
	In Example \ref{ex:can_norme}, it is shown that every \tbvs{}
	allows a \tnormapprox{}.
\end{remark}
Now, the Gauß Theorem can be proved using \tnormapprox{}s and the
characterisation of the dual of $\liblnd{\encl{\dom}}$.
\begin{theorem}\label{thm:ggt_me}
	Let $\encl{\dom} \subset \Rn$ be open, $\dom \in \bor{\encl{\dom}}$ be
	a bounded \tbvs{} such that $\dist{\dom}{\bd{\encl{\dom}}} > 0$. Furthermore,
	let $\{\nap_k\}_{k\in \N}$ be a \tnormapprox{} with limit $\nap$. Then
	there exists $\norme \in \bawln{\encl{\dom}}$ such that for every $k\in
	\N$, $1\leq k \leq n$
	\begin{equation*}
		\cor{\norme_k} \subset \bd{\dom} \,.
	\end{equation*}
	and the \textit{Gauß formula}\index{Gauß formula}
	\begin{equation}\label{eq:ggt_me}
		\divv{\funv}\left(\Int{\dom}\right) +
		\I{\bd{\dom}}{\nap}{\divv{\funv}} = \sI{\bd{\dom}}{\funv}{\norme} 
	\end{equation}
	holds for every $\funv \in \dmid{\encl{\dom}}$.
	The \tme{} $\norme$ is minimal in the norm, i.e. if $\norme' \in \bawln{\encl{\dom}}$ satisfies \eqref{eq:ggt_me} for every $\funv \in \dmid{\encl{\dom}}$, then
	\begin{equation*}
		\norm{}{\norme} \leq \norm{}{\norme'} \,.
	\end{equation*}
	In addition, for every \tbvs{} $\bvs\in \bor{\encl{\dom}}$
	\begin{equation*}
		\norme(\bvs) = - \lim \limits_{k \to \infty} \I{\bvs\cap
		\supp{\Deriv{\nap_k}}}{\Deriv{\nap_k}}{\lem} = -
		\I{\rbd{\bvs}\cap \cl{\dom}}{\nap \cdot \normal{\bvs}}{\ham^{n-1}}
	\end{equation*}
\end{theorem}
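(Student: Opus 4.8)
The plan is to realise the left-hand side of \eqref{eq:ggt_me} as a single bounded linear functional on $\libln$ and then invoke the duality of Proposition \ref{prop:dual_libln} to manufacture $\norme$. Concretely, I would define $\de : \dmid{\encl{\dom}} \to \R$ by
\[
	\df{\de}{\funv} := \divv{\funv}\left(\Int{\dom}\right) + \I{\bd{\dom}}{\nap}{\divv{\funv}} \,.
\]
This is patently linear in $\funv$, and the first task is continuity for the $\libln$-norm. For this I would integrate by parts against the normal approximation: since each $\nap_k$ is \tLcont{} with compact support in $\encl{\dom}$ and $\funv$ has divergence measure,
\[
	\I{\encl{\dom}}{\nap_k}{\divv{\funv}} = -\I{\encl{\dom}}{\funv\cdot\Deriv{\nap_k}}{\lem} \,.
\]
Properties (1)--(3) give $\nap_k\to\nap$ $\ham^{n-1}$-\tmae{}, and because for $\funv\in\dmid{\encl{\dom}}$ the measure $\divv{\funv}$ does not charge $\ham^{n-1}$-\tnulset{}s (a standard property of essentially bounded divergence-measure fields, cf. \cite{silhavy_divergence_2009}), dominated convergence yields $\I{\encl{\dom}}{\nap_k}{\divv{\funv}}\to\I{\encl{\dom}}{\nap}{\divv{\funv}}$. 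Splitting this last integral over $\Int{\dom}$, $\bd{\dom}$ and $\left(\cl{\dom}\right)^c$ (on which $\nap\equiv 0$ $\ham^{n-1}$-\tmae{}) recovers exactly $\df{\de}{\funv}$. Hence $\df{\de}{\funv}=-\lim_k\I{\encl{\dom}}{\funv\cdot\Deriv{\nap_k}}{\lem}$, and the uniform bound (4) forces $|\df{\de}{\funv}|\leq\left(\sup_k\norm{1}{\Deriv{\nap_k}}\right)\norm{}{\funv}$, so $\de$ is bounded.

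Having bounded $\de$ on the subspace $\dmid{\encl{\dom}}\subset\libln$, I would extend it to all of $\libln$ by Hahn--Banach without increasing its norm and identify the extension with $\norme\in\bawln{\encl{\dom}}$ through Proposition \ref{prop:dual_libln}, so that $\df{\de}{\funv}=\I{\encl{\dom}}{\funv}{\norme}$ for every $\funv$. Minimality is then automatic: if $\norme'\in\bawln{\encl{\dom}}$ also satisfies \eqref{eq:ggt_me}, then the symbol $\sI{\bd{\dom}}{\funv}{\norme'}$ already presupposes $\cor{\norme_k'}\subset\bd{\dom}$, hence equals $\I{\encl{\dom}}{\funv}{\norme'}$, which shows that the functional associated to $\norme'$ restricts to $\de$ on $\dmid{\encl{\dom}}$; by the norm preservation of the Hahn--Banach extension together with the isometry of Proposition \ref{prop:dual_libln}, $\norm{}{\norme'}\geq\norm{}{\norme}$.

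Next I would pin down the core. For $\eR\notin\bd{\dom}$ choose an open ball $\nhd\ni\eR$ disjoint from $\bd{\dom}$ and contained either in the interior of $\dom$ or in $\left(\cl{\dom}\right)^c$. For any continuously differentiable $\phi$ with compact support in $\nhd$ the field $\funv=\phi e_k$ lies in $\dmid{\encl{\dom}}$ and its divergence measure is supported in $\nhd$, so both terms of $\df{\de}{\funv}$ vanish (the divergence of a compactly supported field has total mass zero, and $\nhd$ misses $\bd{\dom}$); thus $\I{\encl{\dom}}{\phi}{\norme_k}=0$. By Proposition \ref{prop:ram_on_core} the measure $\norme_k$ is represented on $\cor{\norme_k}$ by a Radon measure, which therefore vanishes on $\nhd$, so $\cor{\norme_k}\cap\nhd=\emptyset$ and $\eR\notin\cor{\norme_k}$. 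This gives $\cor{\norme_k}\subset\bd{\dom}$ for every $k$, which legitimises the surface symbol $\sI{\bd{\dom}}{\funv}{\norme}=\I{\encl{\dom}}{\funv}{\norme}$ and turns the identity above into \eqref{eq:ggt_me}.

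Finally, for the explicit values on a \tbvs{} $\bvs\in\bor{\encl{\dom}}$, I would test \eqref{eq:ggt_me} against $\funv=\ind{\bvs}e_k$, which belongs to $\dmid{\encl{\dom}}$ because $\divv{\funv}=\partial_k\ind{\bvs}$ is the $k$-th component of $\Deriv{\ind{\bvs}}=-\normal{\bvs}\,\reme{\ham^{n-1}}{\rbd{\bvs}}$. The left side equals $\norme_k(\bvs)$ (the integral of a characteristic function against $\norme_k$), while the right side, after using $\nap\equiv 1$ $\ham^{n-1}$-\tmae{} on $\Int{\dom}$ and collecting the two terms, equals $-\I{\rbd{\bvs}\cap\cl{\dom}}{\nap\,(\normal{\bvs})_k}{\ham^{n-1}}$; this identifies $\norme(\bvs)$ with the third expression. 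For the middle expression I would instead apply the Gauß--Green formula for the \tLcont{} $\nap_k$ on the \tbvs{} $\bvs$,
\[
	\I{\bvs}{\Deriv{\nap_k}}{\lem} = \I{\rbd{\bvs}}{\nap_k\,\normal{\bvs}}{\ham^{n-1}} \,,
\]
(the cut to $\supp{\Deriv{\nap_k}}$ being immaterial since $\Deriv{\nap_k}$ vanishes off its support) and pass to the limit by dominated convergence, using $\nap_k\to\nap$ $\ham^{n-1}$-\tmae{} and the finiteness of $\reme{\ham^{n-1}}{\rbd{\bvs}\cap\cl{\dom}}$. The main obstacle throughout is the measure-theoretic compatibility between $\divv{\funv}$ (and $\ham^{n-1}$) and the merely $\ham^{n-1}$-almost-everywhere convergence $\nap_k\to\nap$: every limit passage hinges on the fact that the divergence measures of $\dmid{\encl{\dom}}$-fields and the boundary measures $\reme{\ham^{n-1}}{\rbd{\bvs}}$ ignore $\ham^{n-1}$-\tnulset{}s, together with the Federer-type bookkeeping (cf. \cite{Evans1992}) ensuring that $\Int{\dom}$, $\bd{\dom}$ and $\left(\cl{\dom}\right)^c$ exhaust $\Rn$ up to such a set.
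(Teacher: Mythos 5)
Your skeleton matches the paper's proof: you build the functional $\de_0$ on the subspace $\dmid{\encl{\dom}}\subset \liblnd{\encl{\dom}}$, obtain continuity from the integration by parts $\I{\encl{\dom}}{\nap_k}{\divv{\funv}} = -\I{\encl{\dom}}{\funv\cdot\Deriv{\nap_k}}{\lem}$ together with hypothesis (4) and $\divv{\funv}\wac \ham^{n-1}$, extend by Hahn--Banach, represent the extension through Proposition \ref{prop:dual_libln}, obtain minimality from norm preservation, and read off the values on a \tbvs{} $\bvs$ by testing with $\ind{\bvs}e_k$. All of these steps are sound and coincide with the paper's argument.

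The genuine gap is the localisation $\cor{\norme_k}\subset\bd{\dom}$, which is exactly the hard part of the theorem. You test with fields $\phi e_k$, $\phi$ smooth and compactly supported in a ball $\nhd$ disjoint from $\bd{\dom}$, get $\I{\encl{\dom}}{\phi}{\norme_k}=0$, and conclude via Proposition \ref{prop:ram_on_core} that $\cor{\norme_k}\cap\nhd=\emptyset$. This inference fails for \emph{signed} \tpfa{} \tme{}s: vanishing of all integrals against continuous compactly supported functions only forces the Radon \emph{representation} of $\norme_k$ to vanish on $\nhd$; the \tcor{} is defined through $\tova{\norme_k}$ on Borel sets, which the representation does not control (and Proposition \ref{prop:ram_on_core} only asserts the representation is supported on $\cor{\norme_k}$, not that its support equals the core). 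Concretely, fix $\eR_0 \in \Int{\dom}$, put $H^{\pm} := \{\eRR \in \Rn \setpipe \pm(\eRR-\eR_0)\cdot e_1 > 0\}$, and let $\me^{\pm}\in\bawl{\encl{\dom}}$ be one-sided densities at $\eR_0$, constructed as in Example \ref{ex:dzero} (take $\lambda := \reme{\lem}{H^{\pm}}$ and $\ferm = \{\eR_0\}$ there). The dipole $\me := \me^{+}-\me^{-}$ satisfies $\I{\encl{\dom}}{\sfun}{\me} = \sfun(\eR_0)-\sfun(\eR_0)=0$ for every bounded continuous $\sfun$, so its Radon representation is the zero measure; yet $\tova{\me}(\ball{\eR_0}{\delta}\cap\encl{\dom}) = 2$ for every small $\delta>0$, hence $\cor{\me}=\{\eR_0\}\subset\Int{\dom}$. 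Your test is therefore blind to such components, and nothing in your argument excludes them from the Hahn--Banach extension. The paper closes precisely this hole by a different mechanism: it tests $\de_0$ against $\funv\cdot\ind{\dnhd{\dom}{\delta}}$ and $\funv\cdot\ind{\idnhd{\dom}{\delta}}$ (admissible by the Coarea formula and the product rule for $\dmid{\encl{\dom}}$-fields), deduces that $\reme{\norme}{\dnhd{\dom}{\delta}}$ and $\reme{\norme}{(\encl{\dom}\setminus\idnhd{\dom}{\delta})}$ still represent $\de_0$, and then combines $\norm{}{\norme}=\norm{}{\de_0}$ with the additivity of the \ttova{} to force $\tova{\norme_k}(\encl{\dom}\setminus\dnhd{\dom}{\delta}) = 0$ and $\tova{\norme_k}(\idnhd{\dom}{\delta}) = 0$. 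It is this norm-minimality argument, which controls the variation rather than the action on test functions, that you must substitute for your core step; the remainder of your proof then goes through.
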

The preceding new Gauß Theorem sets itself apart from the literature by
introducing normal measures. In the literature, Gauß formulas for sets of
finite perimeter and essentially bounded vector field can be found in the form
of functionals on a function space on the boundary (cf.
\cite[p. 448]{silhavy_divergence_2009}) or as functions on the boundary which are
obtained by mollification (cf. \cite[p. 262f]{chen_gauss-green_2009}). The
approach chosen here enables a clean separation of geometry and vector field.
In plus, it yields the existence of a normal measure which is defined on all
Borel subsets.
\begin{definition}
	$\norme\in \bawln{\encl{\dom}}$ satisfying \eqref{eq:ggt_me} for all
	$\funv \in \dmid{\encl{\dom}}$ with some
	limit $\nap$ of a \tnormapprox{} of $\ind{\dom}$ that is minimal in the
	sense of Theorem \ref{thm:ggt_me} is called
	\textbf{\tggtme}\index{normal measure@\tggtme{}} of $\dom$ related to
	$\nap$.
\end{definition}
Now, the proof of Theorem \ref{thm:ggt_me} is given.
\begin{proof}
	Let $\dom \in \bor{\encl{\dom}}$ be a bounded set of finite perimeter
	with $\dist{\dom}{\bd{\encl{\dom}}} > 0$ and let $\{\nap_k\}_{k
	\in \N} \subset \skpdr{1}{\infty}{\encl{\dom}}{[0,1]}$ be an associated
	\tnormapprox{} with limit function $\nap$.

	Now, let $\funv \in \dmid{\encl{\dom}}$. 
	Then by the Dominated Convergence Theorem (cf. \cite[p. 20]{Evans1992})
	\begin{equation*}
		\I{\encl{\dom}}{\nap_k}{\divv{\funv}} \xrightarrow{k \to
	\infty} \divv{\funv}(\Int{\dom}) + \I{\bd{\dom}}{\nap}{\divv{\funv}} \,.	
	\end{equation*}
	Note that $\divv{\funv} \wac \ham^{n-1}$ (cf.
	\cite[p. 1014]{chen_structure_2011}).
	On the other hand, 
	\begin{equation*}
		\funv \cdot \nap_k \in \dmid{\encl{\dom}}
	\end{equation*}
	by \cite[p. 101]{chen_divergencemeasure_1999}. Furthermore, $\funv \cdot
	\nap_k$ is compactly supported in $\encl{\dom}$. Thus by
	\cite[p. 252]{chen_divergence-measure_2005} for every $k \in \N$
	\begin{equation*}
		\I{\encl{dom}}{\nap_k}{\divv{\funv}} = -
		\I{\encl{\dom}}{\Deriv{\nap_k} \cdot \funv}{\lem} \,.
	\end{equation*}
		Hence for every $k \in \N$
	\begin{equation*}
		\left | \I{\encl{\dom}}{\nap_k}{\divv{\funv}} \right | \leq
		\norm{\lp^1}{\Deriv{\nap_k}} \norm{\infty, \supp{\nap_k}}{\funv} \,.
	\end{equation*}
	This implies
	\begin{equation*}
		\left|  \divv{\funv}(\Int{\dom}) +
		\I{\bd{\dom}}{\nap}{\divv{\funv}} \right | \leq \limsup
		\limits_{k \to \infty} \norm{\lp^1}{\Deriv{\nap_k}}
		\norm{\infty,\supp{\nap_k}}{\funv} \leq \sup\limits_{k\in \N}
		\norm{1}{\Deriv{\nap_k}} \norm{\infty}{\funv} \,.
	\end{equation*}
	Hence
	\begin{equation*}
		\de_0 : \dmid{\encl{\dom}} \to \R : \funv \mapsto
	\divv{\funv}(\Int{\dom}) + \I{\bd{\dom}}{\nap}{\divv{\funv}}  
	\end{equation*}
	is a continuous linear functional on a subspace of
	$\liblnd{\encl{\dom}}$. By the Hahn-Banach Theorem
	\cite[p. 63]{dunford_linear_1988}
	there exists a continuous linear extension $\de$ of $\de_0$ to all of
	$\liblnd{\encl{\dom}}$ such that $\norm{}{\de} = \norm{}{\de_0}$. In
	particular, this extension is minimal in the norm.
	By Proposition \ref{prop:dual_libln} there exists a $\norme \in \bawln{\encl{\dom}}$ such that for all $\funv \in \dmi$
	\begin{equation*}
		\divv{\funv}(\Int{\dom}) + \I{\bd{\dom}}{\nap}{\divv{\funv}} = \I{\encl{\dom}}{\funv}{\norme} \,.
	\end{equation*}
	and $\norm{}{\norme}=\norm{}{\de_0}$.
	Furthermore
	\begin{equation*}
		\sum \limits_{k=1}^n \tova{\norme_k}(\dom) = \norm{}{\de_0} = \norm{}{\de} \,.
	\end{equation*}
	Note that by the Coarea Formula (cf. \cite[p. 112]{Evans1992}), for
	\tmae{} $0< \delta <
	\dist{\dom}{\bd{\encl{\dom}}}$ the neighbourhood $\dnhd{\dom}{\delta}$
	is a \tbvs{}. By \cite[p. 101]{chen_divergencemeasure_1999}
	\begin{equation*}
		\funv \cdot \ind{\dnhd{\dom}{\delta}} \in \dmid{\encl{\dom}}
		\,.
	\end{equation*}
	But $\funv \cdot \ind{\dnhd{\dom}{\delta}}$ and $\funv$ agree on a
	neighbourhood of $\dom$, whence
	\begin{equation*}
		\divv{(\funv\cdot\ind{\dnhd{\dom}{\delta}})} (\Int{\dom}) +
		\I{\bd{\dom}}{\nap}{\divv{(\funv \cdot
	\ind{\dnhd{\dom}{\delta}})}} = \divv{\funv}(\Int{\dom}) +
\I{\bd{\dom}}{\nap}{\divv{\funv}}\,.
	\end{equation*}
	whence
	\begin{equation*}
		\I{\encl{\dom}\setminus\dnhd{\dom}{\delta}}{\funv}{\norme} = 0 
	\end{equation*}
	for every $\funv \in \dmid{\encl{\dom}}$ and \tmaet{} $0< \delta < \dist{\dom}{\bd{\encl{\dom}}}$.
	Thus for \tmaet{} such $\delta>0$ and $\funv\in \dmid{\encl{\dom}}$
	\begin{equation*}
		\df{\de}{\funv} = \I{\dom}{\funv}{\reme{\norme}{\dnhd{\dom}{\delta}}} \,.
	\end{equation*}
	This implies 
	\begin{equation*}
		\norm{}{\de_0} \leq \norm{}{\reme{\norme}{\dnhd{\dom}{\delta}}} \leq \norm{}{\norme} = \norm{}{\de_0}
	\end{equation*}
	and thus
	\begin{equation*}
		\sum \limits_{k=1}^n \tova{\reme{\norme_k}{(\encl{\dom}
		\setminus \dnhd{\dom}{\delta})}}(\dom) = \sum \limits_{k=1}^n
		\tova{\norme_k}(\encl{\dom}) - \tova{\reme{\norme_k}{\dnhd{\dom}{\delta}}}(\dom) = \norm{}{\de_0} -\norm{}{\de_0} = 0 
	\end{equation*}
	whence
	\begin{equation*}
		\tova{\norme_k}\left ( \encl{\dom} \setminus \dnhd{\dom}{\delta} \right ) = 0 
	\end{equation*}
	for \tmaet{} $0<\delta < \dist{\dom}{\bd{\encl{\dom}}}$.
	Since $\tova{\norme_k}$ is monotone, the statement follows for all
	$0<\delta < \dist{\dom}{\bd{\encl{\dom}}}$.

	Note that by the Coarea formula (cf. \cite[p. 112]{Evans1992})
	$\idnhd{\dom}{\delta}$ is a \tbvs{} for \tmaet{} $\delta > 0$. By \cite[p. 101]{chen_divergencemeasure_1999}
	\begin{equation*}
		\funv \cdot \ind{\idnhd{\dom}{\delta}} \in \dmid{\encl{\dom}} \,.
	\end{equation*}
	
	Since $\dom$ is bounded, $\funv \cdot \ind{\idnhd{\dom}{\delta}}$ is
	compactly supported in $\Int{\dom}$ and thus by
	\cite[p. 252]{chen_divergence-measure_2005}
	\begin{equation*}
		\divv{\funv \cdot \ind{\idnhd{\dom}{\delta}}}(\Int{\dom}) +
	\I{\bd{\dom}}{\nap}{\divv{\left(\funv \cdot \ind{\idnhd{\dom}{\delta}}\right)}} = 0 \,.
	\end{equation*}
	This implies
	\begin{equation*}
		\I{\dom}{\funv}{\reme{\norme}{\idnhd{\dom}{\delta}}} = 0 
	\end{equation*}
	for every such $\delta > 0$ and $\funv \in \dmid{\encl{\dom}}$.
	Hence
	\begin{equation*}
		\norm{}{\de_0} \leq \norm{}{\reme{\norme}{\encl{\dom}\setminus\idnhd{\dom}{\delta}}} \leq \norm{}{\norme} = \norm{}{\de_0} \,.
	\end{equation*}
	and analogously to the reasoning for $\dnhd{\dom}{\delta}$ one deduces
	\begin{equation*}
		\tova{\norme_k}\left(\idnhd{\dom}{\delta}\right) = 0 
	\end{equation*}
	for every $k \in \N, 1 \leq k \leq n$.

	Now for every $\delta > 0, \delta < \dist{\dom}{\bd{\encl{\dom}}}$ and every $k \in \N, 1 \leq k \leq n$
	\begin{equation*}
		\tova{\norme_k}\left( \left(\encl{\dom} \setminus \dnhd{\dom}{\delta} \right ) \cup \idnhd{\dom}{\delta} \right ) = 0 \,.	
	\end{equation*}
	This implies
	\begin{equation*}
		\cor{\norme_k} \subset \bd{\dom}
	\end{equation*}
	for every $k \in \N, 1 \leq k \leq n$. This establishes Equation
	\eqref{eq:ggt_me}.

	Now, let $\bvs \in \bor{\encl{\dom}}$ be a \tbvs{}.
	Then for $k \in \N, 1\leq k \leq n$
	\begin{equation*}
		e_k\cdot \ind{\bvs} \in \dmid{\encl{\dom}} \,.
	\end{equation*}
	Note that
	\begin{equation*}
		\divv{(e_k\cdot \ind{\bvs})} = \pD{\ind{\bvs}}{k} = -
		\acme{(\normal{\bvs})_k}{\reme{\ham^{n-1}}{\rbd{\bvs}}} \,.
	\end{equation*}
	The established Gauß formula yields
	\begin{align*}
		\I{\encl{\dom}}{\ind{\bvs}}{\norme_k} & = \I{\encl{\dom}}{e_k \cdot \ind{\bvs}}{\norme} \\
						      &	= \divv{(e_k \cdot \ind{\bvs})}(\Int{\dom}) + \I{\bd{\dom}}{\nap}{\divv{(e_k\cdot \ind{\bvs})}}\\
						      & = \I{\encl{\dom}}{\nap}{\divv{(e_k\cdot\ind{\bvs})}} \\
						      & = \lim \limits_{l \to \infty} \I{\encl{\dom}}{\nap_l}{\divv{(e_k\cdot\ind{\bvs})}}\\
						      & = \lim \limits_{l \to \infty} - \I{\bvs}{e_k \cdot\Deriv{\nap_l}}{\lem}
	\end{align*}
	Since $k \in \N, 1\leq k \leq n$ was arbitrary
	\begin{equation*}
		\norme(\bvs) = -\lim \limits_{k \to \infty}\I{\bvs\cap
		\supp{\Deriv{\nap_k}}}{\Deriv{\nap_k}}{\lem} \,.
	\end{equation*}
	On the other hand, for every \tbvs{} $\bvs\in \bor{\encl{\dom}}$ and
	for $k \in \N$, $1 \leq k \leq n$
	\begin{align*}
		\norme_k(\bvs) & = \lim \limits_{l \to
		\infty}\I{\encl{\dom}}{\nap_l}{\divv{(e_k \cdot \ind{\bvs})}} \\
										       &
		= \lim \limits_{l \to \infty}\I{\encl{\dom}}{\nap_l}{\pD{(\ind{\bvs})}{k}} \\
										       &
		=  - \lim \limits_{l \to \infty} \I{\rbd{\bvs}}{\nap_l \cdot
		(\normal{\bvs})_k}{\ham^{n-1}} \\
										       &
		= - \I{\rbd{\bvs}}{\nap \cdot (\normal{\bvs})_k}{\ham^{n-1}} \,.
	\end{align*}
	Hence
	\begin{equation*}
		\norme(\bvs) = - \I{\rbd{\bvs}\cap \cl{\dom}}{\nap \cdot \normal{\bvs}}{\ham^{n-1}} \,.	
	\end{equation*}
\end{proof}
Given a \tnormapprox{} of $\ind{\dom}$, normal measures are uniquely
defined on sets of finite perimeter.
\begin{proposition}
	Let $\encl{\dom}\subset \Rn$ be open and $\dom \in \bor{\encl{\dom}}$
	be a bounded \tbvs{} such that $\dist{\dom}{\bd{\encl{\dom}}}>0$.
	Let $\{\nap_k\}_{k \in \N}$ be a \tnormapprox{} with limit $\nap$.
	Let $\norme \in \bawln{\encl{\dom}}$ be an associated \tggtme{}. Then
	for every \tbvs{} $\bvs \in \bor{\encl{\dom}}$ there exists a Lebesgue \tnulset{} $N \subset \R$
	\begin{equation*}
		\norme(\bvs) = \I{\rbd{\bvs} \cap \bd{\dom}}{-\nap
			\normal{\bvs}}{\ham^{n-1}} + \lim \limits_{\substack{\delta \downarrow 0\\\delta \notin N}}\I{\mint{\bvs} \cap \rbd{\idnhd{\dom}{\delta}}}{\normal{\idnhd{\dom}{\delta}}}{\ham^{n-1}}
	\,.
	\end{equation*}
\end{proposition}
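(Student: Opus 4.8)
The plan is to reduce everything to the representation of $\norme(\bvs)$ already obtained in Theorem \ref{thm:ggt_me}, namely
\begin{equation*}
	\norme(\bvs) = - \I{\rbd{\bvs} \cap \cl{\dom}}{\nap \cdot \normal{\bvs}}{\ham^{n-1}} \,,
\end{equation*}
and to recast its two geometric pieces separately. First I would split the domain of integration along the disjoint decomposition $\cl{\dom} = \bd{\dom} \cup \Int{\dom}$. On the interior part, property~(2) of the \tnormapprox{} gives $\nap = 1$ $\ham^{n-1}$-\tmae{} on $\Int{\dom}$, so that
\begin{equation*}
	\norme(\bvs) = \I{\rbd{\bvs} \cap \bd{\dom}}{-\nap \normal{\bvs}}{\ham^{n-1}} - \I{\rbd{\bvs} \cap \Int{\dom}}{\normal{\bvs}}{\ham^{n-1}} \,.
\end{equation*}
The first summand is already the boundary term in the claim, so the whole task becomes identifying the interior integral with the limit over $\delta$.

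For this I would relate $\rbd{\bvs}$ to $\rbd{\idnhd{\dom}{\delta}}$ through the intersection $\bvs \cap \idnhd{\dom}{\delta}$. By the Coarea Formula (cf. \cite[p. 112]{Evans1992}) $\idnhd{\dom}{\delta}$ is a \tbvs{} for \tmaet{} $\delta > 0$, hence so is the bounded set $\bvs \cap \idnhd{\dom}{\delta}$. Applying the Gauß--Green formula to the constant fields $e_k$ yields that the total normal integral vanishes, i.e. $\I{\rbd{(\bvs \cap \idnhd{\dom}{\delta})}}{\normal{\bvs \cap \idnhd{\dom}{\delta}}}{\ham^{n-1}} = 0$. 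Inserting the standard decomposition of the reduced boundary of an intersection of sets of finite perimeter (cf. \cite{Ambrosio2000}), which writes $\rbd{(\bvs \cap \idnhd{\dom}{\delta})}$ as the union, up to an $\ham^{n-1}$-\tnulset{}, of $\rbd{\bvs} \cap \mint{\idnhd{\dom}{\delta}}$ (where the intersection normal equals $\normal{\bvs}$), of $\mint{\bvs} \cap \rbd{\idnhd{\dom}{\delta}}$ (where it equals $\normal{\idnhd{\dom}{\delta}}$), and of a remaining piece $C_\delta \subset \rbd{\bvs} \cap \rbd{\idnhd{\dom}{\delta}}$ carrying coinciding normals, I would obtain for every admissible $\delta$
\begin{equation*}
	\I{\mint{\bvs} \cap \rbd{\idnhd{\dom}{\delta}}}{\normal{\idnhd{\dom}{\delta}}}{\ham^{n-1}} = - \I{\rbd{\bvs} \cap \mint{\idnhd{\dom}{\delta}}}{\normal{\bvs}}{\ham^{n-1}} - \I{C_\delta}{\normal{\bvs}}{\ham^{n-1}} \,.
\end{equation*}

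The main obstacle is to control the exceptional parameters, which is exactly the Lebesgue \tnulset{} $N$ of the statement. Into $N$ I would gather those $\delta$ for which $\idnhd{\dom}{\delta}$ fails to be a \tbvs{} (a $\lem$-\tnulset{} by Coarea) together with those for which $\I{C_\delta}{\normal{\bvs}}{\ham^{n-1}} \neq 0$. The latter is governed by the fact that, since $\idnhd{\dom}{\delta}$ is an inner neighbourhood, the topological boundaries $\rbd{\idnhd{\dom}{\delta}}$ lie on pairwise disjoint level sets of the distance to $\dom^c$; as $\ham^{n-1}(\rbd{\bvs}) < \infty$, only countably many of these level sets can meet $\rbd{\bvs}$ in a set of positive $\ham^{n-1}$-measure. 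Hence $C_\delta$, which sits inside $\rbd{\bvs} \cap \rbd{\idnhd{\dom}{\delta}}$, is $\ham^{n-1}$-null for all but countably many $\delta$, so $N$ is indeed Lebesgue-null and the last term above drops out for $\delta \notin N$.

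Finally I would pass to the limit $\delta \downarrow 0$ along $\delta \notin N$. Since $\idnhd{\dom}{\delta}$ increases to $\Int{\dom}$, the sets $\rbd{\bvs} \cap \mint{\idnhd{\dom}{\delta}}$ increase to $\rbd{\bvs} \cap \Int{\dom}$ up to the same null level-set corrections already absorbed into $N$; as $|\normal{\bvs}| = 1$ is $\ham^{n-1}$-integrable on the finite-measure set $\rbd{\bvs}$, dominated convergence gives
\begin{equation*}
	\lim_{\substack{\delta \downarrow 0 \\ \delta \notin N}} \I{\rbd{\bvs} \cap \mint{\idnhd{\dom}{\delta}}}{\normal{\bvs}}{\ham^{n-1}} = \I{\rbd{\bvs} \cap \Int{\dom}}{\normal{\bvs}}{\ham^{n-1}} \,,
\end{equation*}
and combining with the two displays above produces the asserted formula. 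The points I expect to require the most care are the measure-theoretic identification of the intersection normal on each piece of the Federer decomposition and the verification that the \emph{topological} interior $\Int{\dom}$, rather than a measure-theoretic variant, is recovered in the limit; both reduce to the observation that the symmetric differences involved are supported on the level-set corrections collected in $N$.
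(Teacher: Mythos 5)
Your proof is correct, but it takes a genuinely different route from the paper's. The paper never splits $\cl{\dom}$ into $\bd{\dom}\cup\Int{\dom}$: it first reduces to $\bvs = \mint{\bvs}$ (justified because $\norme$ is \twac{} with respect to $\lem$), then uses the core property $\cor{\norme}\subset\bd{\dom}$ to write $\norme(\bvs) = \norme\bigl(\bvs\cap\mint{(\dnhd{\dom}{\delta}\setminus\idnhd{\dom}{\delta})}\bigr)$ for admissible finite $\delta$, applies the representation formula of Theorem \ref{thm:ggt_me} to this intersection with the annulus, and decomposes its reduced boundary (via \cite{degiovanni_cauchy_1999}) so that \emph{both} terms of the asserted formula are already visible at finite $\delta$; the limit $\delta \downarrow 0$ then acts only on the $\rbd{\bvs}$-term, by continuity from above of a finite \tsme{}. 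You instead apply the representation formula to $\bvs$ itself, use $\nap = 1$ $\ham^{n-1}$-\tmae{} on $\Int{\dom}$ to isolate the interior contribution, and convert that interior integral into the limit of inner-boundary fluxes by a purely geometric identity, namely $\I{\rbd{G}}{\normal{G}}{\ham^{n-1}} = 0$ for the bounded \tbvs{} $G = \bvs\cap\idnhd{\dom}{\delta}$, combined with the intersection decomposition of reduced boundaries from \cite{Ambrosio2000}. Both arguments rest on the same pillars (the representation formula of Theorem \ref{thm:ggt_me}, coarea to make $\idnhd{\dom}{\delta}$ a \tbvs{} for \tmae{} $\delta$, and a decomposition of the boundary of an intersection), but your version buys two things: the exceptional set $N$ is built explicitly and transparently --- a coarea \tnulset{} together with the at most countably many $\delta$ for which $\rbd{\bvs}$ meets the pairwise disjoint sets $\rbd{\idnhd{\dom}{\delta}}\subset\{\distf{\dom^c}=\delta\}$ in positive $\ham^{n-1}$-measure --- and the factor $\mint{\bvs}$ in the statement falls out of the decomposition directly, so the preliminary reduction to $\bvs=\mint{\bvs}$ is not needed. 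What the paper's route buys in exchange is an identity valid at each admissible finite $\delta$, and it makes the sign of the second integrand visible (namely that $\rbd{\idnhd{\dom}{\delta}}$ is the \emph{inner} boundary of the annulus, whose outer normal is $-\normal{\idnhd{\dom}{\delta}}$). One small simplification available to you: the sets $\mint{\idnhd{\dom}{\delta}}$ increase \emph{exactly} to $\Int{\dom}$, since $\idnhd{\dom}{\delta}\subset\mint{\idnhd{\dom}{\delta}}\subset\{\distf{\dom^c}\geq\delta\}\subset\Int{\dom}$ and $\bigcup_{\delta>0}\idnhd{\dom}{\delta} = \Int{\dom}$, so the ``null level-set corrections'' you anticipate in the final limit are in fact unnecessary.
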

\begin{proof} Note that $\bvs$ and $\mint{\bvs}$ only differ by a
	$\lem$-\tnulset{} (cf. \cite[p. 43]{Evans1992}). Hence $\mint{\bvs}$ is also a set of finite
	perimeter. W.l.o.g. $\bvs = \mint{\bvs}$.
	The Coarea formula (cf. \cite[p. 112]{Evans1992}) implies that for
	\tmae{} $\delta >0$ the set
	\begin{equation*}
		\dnhd{\dom}{\delta}\setminus \idnhd{\dom}{\delta}
	\end{equation*}
	has finite perimeter. Then
	$\mint{(\dnhd{\dom}{\delta}\setminus\idnhd{\dom}{\delta})}$ is also a
	\tbvs{}. By \cite[p. 5]{gurtin_geometric_1986}, 
	\begin{equation*}
		\bvs \cap \mint{(\dnhd{\dom}{\delta}\setminus
		\idnhd{\dom}{\delta})}
	\end{equation*}
	is also a set of finite perimeter. Note that the Coarea Formula also
	implies that for \tmae{} $\delta >0$
	\begin{equation*}
		\ham^{n-1}(\mbd{\bvs} \cap \bd{(\dnhd{\dom}{\delta}\setminus
		\idnhd{\dom}{\delta})}) = 0 \,.
	\end{equation*}
	Using this and \cite[p. 199]{degiovanni_cauchy_1999}
	\begin{equation*}
		\mbd{(\bvs \cap
		\mint{(\dnhd{\dom}{\delta}\setminus\idnhd{\dom}{\delta})})}
	\end{equation*}
	differs from 
	\begin{equation*}
		(\mbd{\bvs} \cap \mint{(\dnhd{\dom}{\delta}\setminus
			\idnhd{\dom}{\delta})}) \cup (\bvs \cap
			\mbd{(\dnhd{\dom}{\delta}\setminus
			\idnhd{\dom}{\delta})})
	\end{equation*}
	only by a $\ham^{n-1}$-\tnulset{}. Since $\dnhd{\dom}{\delta}\setminus
	\idnhd{\dom}{\delta}$ has density $1$ at points of its measure
	theoretic interior and the measure theoretic normal is characterised by
	the halfspace it generates (cf. \cite[p. 203]{Evans1992}), one sees that
	\begin{equation*}
		\normal{\bvs \cap
			(\dnhd{\dom}{\delta}\setminus\idnhd{\dom}{\delta})} =
			\normal{\bvs} \mspace{6mu} \text{ on } \mspace{6mu} \rbd{\bvs} \cap
			\mint{(\dnhd{\dom}{\delta}\setminus
				\idnhd{\dom}{\delta})} \cap \rbd{(\bvs \cap
			(\dom \setminus \idnhd{\dom}{\delta}))}\,.
	\end{equation*}

	Theorem \ref{thm:ggt_me} states $\cor{\norme}\subset \bd{\dom}$, thus
	\begin{equation*}
		\norme(\bvs) = \norme(\bvs \cap
		\mint{(\dnhd{\dom}{\delta}\setminus \idnhd{\dom}{\delta})}) =
		- \I{\rbd{(\bvs \cap \mint{(\dnhd{\dom}{\delta}\setminus
			\idnhd{\dom}{\delta})})} \cap
				\cl{\dom}}{\nap\normal{(\bvs \cap
					\mint{(\dnhd{\dom}{\delta}\setminus
			\idnhd{\dom}{\delta})})}}{\ham^{n-1}} \,.
	\end{equation*}
	for \tmae{} $\delta >0$. The integral on the right hand side is for
	\tmae{} $\delta >0$ equal to 
	\begin{equation*}
		\I{\rbd{\bvs} \cap \cl{\dom} \cap 
		\mint{(\dnhd{\dom}{\delta}\setminus\idnhd{\dom}{\delta})}}{-\nap\normal{\bvs}}{\ham^{n-1}}
		+ \I{\bvs \cap \cl{\dom}
		\cap \rbd{(\idnhd{\dom}{\delta})}}{\nap\normal{\idnhd{\dom}{\delta}}}{\ham^{n-1}}
		\,.
	\end{equation*}
	Noting that
	\begin{equation*}
		\I{\rbd{\bvs} \cap \cl{\dom} \cap
		\als}{-\nap\normal{\bvs}}{\ham^{n-1}}
	\end{equation*}
	defines a \tsme{} in $\als$ and using continuity from above yields
	\begin{equation*}
		\lim \limits_{\delta \downarrow 0} \I{\rbd{\bvs} \cap \cl{\dom}
			\cap \mint{(\dnhd{\dom}{\delta}\setminus
		\idnhd{\dom}{\delta})}}{-\nap\normal{\bvs}}{\ham^{n-1}} = \I{\rbd{\bvs} \cap \bd{\dom}}{-\nap \normal{\bvs}}{\ham^{n-1}}
	\end{equation*}
	On the other hand $\rbd{(\idnhd{\dom}{\delta})} \subset \cl{\dom}$.
	Hence
	\begin{equation*}
		\rbd{(\idnhd{\dom}{\delta})} \cap \cl{\dom} \cap \bvs = \bvs
		\cap \rbd{(\idnhd{\dom}{\delta})}  \,.
	\end{equation*}
	Furthermore $\nap =1$ on $\Int{\dom}$.
	This finishes the proof.
\end{proof}
The following picture illustrates the representation of a normal measure from
the preceding proposition.
\begin{figure}[H]
	\centering
	\begin{tikzpicture}[scale=1.2]
		\draw[gray] (-2,-2) rectangle (2,2);
		\draw[gray] (-0.0,0) node {$\dom$};
		\filldraw[pattern=north east lines] (1.5,-1.5) rectangle
			(2,1.5);
		\draw[dashed] (0.5,-1.5) rectangle (2,1.5);
		\draw (2.2,0.5) node {$\bvs$};
		\draw (1.75,1.6) node[rotate=-90] {$\{$};
		\draw (1.75,1.8) node {$\delta$};
		\draw[->,thick] (1.5,-0.1) to (2.5,-0.1);
		\draw (2.5,-0.3) node {$\normal{\idnhd{\dom}{\delta}}$};
		\draw[->,thick] (2,0.1) to (1.2,0.1);
		\draw (1.0,0.3) node {$-\nap\normal{\bvs}$};
		\filldraw[pattern=north east lines] (-1.5,-1.5) rectangle
			(-2,1.5);
		\draw[dashed] (-0.5,-1.5) rectangle (-3.0,1.5);
		\draw (-2.5,0.5) node {$\bvs$};
		\draw (-1.75,1.6) node[rotate=-90] {$\{$};
		\draw (-1.75,1.8) node {$\delta$};
		\draw[->,thick] (-1.5,0) to (-2.5,0);
		\draw (-2.5,-0.3) node {$\normal{\idnhd{\dom}{\delta}}$};

	\end{tikzpicture}
	\caption{Domain of influence for a normal measure and a \tbvs{} $\bvs$}\label{fig:norme_rep}
\end{figure}
The relation of $\reme{\ham^{n-1}}{\rbd{\dom}}$ and $\tova{\norme}$ is treated
in the next proposition.
\begin{proposition}\label{prop:tova_norme}
	Let $\encl{\dom} \subset \Rn$ be open, $\dom \in \bor{\encl{\dom}}$ be
	a bounded \tbvs{} such that $\dist{\dom}{\bd{\encl{\dom}}} > 0$. Furthermore,
	let $\{\nap_k\}_{k\in \N}$ be a \tnormapprox{} with limit $\nap$ and
	let $\norme \in \bawln{\encl{\dom}}$ be the associated \tggtme{}.

	If $\norm{1}{\nap - \ind{\dom}} = 0$, then for every open set $\bals
	\subset \encl{\dom}$
	\begin{equation*}
		\tova{\norme}(\bals) \geq (\reme{\ham^{n-1}}{\rbd{\dom}})(\bals) \,.
	\end{equation*}
\end{proposition}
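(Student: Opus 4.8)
The plan is to prove the inequality by a duality argument: test the normal measure $\norme$ against smooth compactly supported vector fields inside $\bals$, collapse the resulting functional to the classical flux, and recognise the supremum over admissible fields as the perimeter.

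First I would fix an open set $\bals \subset \encl{\dom}$ and take a smooth vector field $\funv$ with compact support in $\bals$ and $\sup|\funv| \leq 1$. Such a field lies in $\dmid{\encl{\dom}}$, so the Gauß formula of Theorem \ref{thm:ggt_me} applies and gives $\I{\encl{\dom}}{\funv}{\norme} = \divv{\funv}(\Int{\dom}) + \I{\bd{\dom}}{\nap}{\divv{\funv}}$. Because $\funv$ is smooth, its divergence measure $\divv{\funv}$ is absolutely continuous with respect to $\lem$, so both terms on the right are ordinary Lebesgue integrals of a $\lem$-a.c.\ measure. This is exactly where the hypothesis $\norm{1}{\nap - \ind{\dom}} = 0$ enters: since $\divv{\funv}$ does not charge $\lem$-null sets, I may replace $\nap$ by its $\lem$-a.e.\ equal $\ind{\dom}$ in both integrals. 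The first term accounts for the interior and the second for the part of $\bd{\dom}$ belonging to $\dom$; as these pieces are disjoint and cover $\dom$ up to a $\lem$-null set, the right-hand side collapses to $\divv{\funv}(\dom)$. I expect this bookkeeping — justifying the replacement of $\nap$ by $\ind{\dom}$ against the abstract (Daniell) integral and checking that the interior/boundary split telescopes to the full set $\dom$ — to be the \emph{main obstacle}; everything downstream is routine.

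With the reduction in hand, the classical Gauß--Green theorem for sets of finite perimeter (Evans) yields $\divv{\funv}(\dom) = \I{\rbd{\dom}}{\funv \cdot \normal{\dom}}{\ham^{n-1}}$, and since $\supp{\funv} \subset \bals$ this integral is effectively taken over $\rbd{\dom} \cap \bals$. Thus $\I{\encl{\dom}}{\funv}{\norme} = \I{\rbd{\dom}\cap\bals}{\funv \cdot \normal{\dom}}{\ham^{n-1}}$ for every admissible $\funv$. Taking the supremum over all such $\funv$ and using that $\normal{\dom}$ is a unit vector $\ham^{n-1}$-a.e.\ on $\rbd{\dom}$, together with the density of smooth fields, the right-hand supremum equals $(\reme{\ham^{n-1}}{\rbd{\dom}})(\bals)$; this is the standard Riesz duality for the total variation of the vector Radon measure $\normal{\dom}\,\reme{\ham^{n-1}}{\rbd{\dom}}$ on the open set $\bals$.

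Finally I would bound the left-hand side by the total variation of $\norme$. Approximating a continuous $\funv$ uniformly by $\norme$-simple functions subordinate to finite measurable partitions of $\bals$ and using $|\funv| \leq 1$ with the (finite) additivity of $\norme$, one obtains $\I{\encl{\dom}}{\funv}{\norme} \leq \tova{\norme}(\bals)$ for every admissible $\funv$; alternatively this follows from the Radon representation of Proposition \ref{prop:ram_on_core}. Hence the supremum over $\funv$ of $\I{\encl{\dom}}{\funv}{\norme}$ is at most $\tova{\norme}(\bals)$, and comparing with the previous paragraph gives $(\reme{\ham^{n-1}}{\rbd{\dom}})(\bals) \leq \tova{\norme}(\bals)$, which is the asserted inequality.
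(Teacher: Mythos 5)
Your proposal is correct and follows essentially the same route as the paper's own proof: test $\norme$ against compactly supported vector fields in $\bals$ with sup-norm at most $1$, use the Gauß formula of Theorem \ref{thm:ggt_me} together with $\norm{1}{\nap - \ind{\dom}} = 0$ to collapse the right-hand side to $\I{\dom}{\divv{\funv}}{\lem}$, apply the classical Gauß--Green theorem for sets of finite perimeter, and identify the resulting supremum with $\tova{\Deriv{\ind{\dom}}}(\bals) = (\reme{\ham^{n-1}}{\rbd{\dom}})(\bals)$. The step you flag as the main obstacle (replacing $\nap$ by $\ind{\dom}$ and telescoping $\Int{\dom}$ and $\bd{\dom}\cap\dom$ to $\dom$) is exactly the silent first chain of equalities in the paper's proof and works just as you describe, since $\divv{\funv}$ is $\lem$-absolutely continuous for smooth $\funv$.
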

\begin{remark}
	Note that $\lem(\bd{\dom})=0$ implies $\norm{1}{\nap-\ind{\dom}} = 0$.
\end{remark}
\begin{proof}
	Let $\sfun \in \ConecN{\encl{\dom}}$. Then using the Gauß Theorem from
	Evans \cite[p. 209]{Evans1992}
	\begin{align*}
		\I{\encl{\dom}}{\sfun}{\norme} & =
		\I{\Int{\dom}}{\divv{\sfun}}{\lem} + \I{\bd{\dom}}{\nap
		\divv{\sfun}}{\lem} \\
		& = \I{\dom}{\divv{\sfun}}{\lem} \\
		& = \I{\rbd{\dom}}{\sfun \cdot \normal{\dom}}{\ham^{n-1}} \,.
	\end{align*}
	Hence for every open set $\bals\subset \encl{\dom}$
	\begin{align*}
		\tova{\norme}(\bals) & \geq \sup \limits_{\substack{\sfun\in
		\ConecN{\bals},\\\normi{\sfun} \leq 1}}
		\I{\encl{\dom}}{\sfun}{\norme} \\
		& \geq \sup \limits_{\substack{\sfun \in
		\ConecN{\bals},\\\norm{C}{\sfun} \leq 1}} \I{\rbd{\dom}}{\sfun
		\cdot \normal{\dom}}{\ham^{n-1}} \\
		& = \tova{\Deriv{\ind{\dom}}}(\bals) \\
		& = (\reme{\ham^{n-1}}{\rbd{\dom}})(\bals) \,.
	\end{align*}
	For the last equality, see e.g. \cite[p. 205]{Evans1992}.
	
	Since $\bals \in \bor{\encl{\dom}}$ was arbitrary, this finishes the
	proof.
\end{proof}
The following example shows that for every \tbvs{} there exists a canonical
normal measure. Hence, Theorem \ref{thm:ggt_me} is always applicable.
\begin{example}{Canonical \tggtme{}\\}\label{ex:can_norme}
	Let $\encl{\dom} \subset \Rn$ be open and $\dom \in \bor{\encl{\dom}}$
	be a bounded \tbvs{} such that $\dist{\dom}{\bd{\encl{\dom}}}>0$.
	Furthermore, let $\mol \in \Sfun{\Rn}$ be the standard \tmol{} (cf.
	\cite[p. 122]{Evans1992}). 
	Then
	\begin{equation*}
		\nap_k(\eR) := \I{\Rn}{\frac{1}{k^n} \mol\left(k (\eRR - \eR)
		\right) \ind{\dom}(\eR)}{y} \,.
	\end{equation*}
	is a \tnormapprox{} for $\ind{\dom}$.
	The limit function $\nap$ satisfies 
	\begin{equation*}
		\nap = \ind{\mint{\dom}} + \frac{1}{2}\ind{\rbd{\dom}} \text{ $\ham^{n-1}$-\tmae{}} \,.
	\end{equation*}
	See \cite[p. 175]{Ambrosio2000} for reference.
	Hence, there exists a \tggtme{} $\nu \in \bawln{\encl{\dom}}$ such that
	for every $\funv\in \dmid{\encl{\dom}}$ the following \textit{Gauß
	formula} holds
	\begin{equation*}
		\divv{\funv}(\mint{\dom}) + \frac{1}{2}\divv{\funv}(\rbd{\dom})
		= \sI{\bd{\dom}}{\funv}{\nu} \,.
	\end{equation*}
	Furthermore,
	\begin{equation*}
		\cor{\nu} \subset \bd{\dom} \,.
	\end{equation*}
\end{example}
The divergence on the regular boundary of $\dom$, weighted with $\frac{1}{2}$,
cannot be found in the literature. This is due the fact, that the majority of
the texts prohibit the vector fields under consideration from exhibiting such
concentrations. The remaining sources treat settings similar to the one of
Theorem \ref{thm:ggt_me_all} below. The weight $\frac{1}{2}$ appears plausible,
when interpreting the divergence as source strength of the field $\funv$. At
points of the regular boundary, $\dom$ geometrically resembles a half-space.
Then half of the source strength can be seen to flow into the domain and the other half flows
outwards.

The next example shows that for many closed sets of finite perimeter a more
familiar form of the Gauß Theorem can be derived.
\begin{example}{Outer \tggtme{}\\}\label{ex:outer_port}
	Let $\encl{\dom}\subset \Rn$ be open and $\dom \in \bor{\dom}$ be a
	bounded, closed \tbvs{} such that $\delta_0 := \dist{\dom}{\bd{\encl{\dom}}}>0$.
	Furthermore, let there be a sequence $\{\delta_k\}_{k \in \N}\subset
	(0,\infty)$ such
	that $\lim \limits_{k \to \infty} \delta_k = 0$ and 
	\begin{equation*}
		\sup \limits_{k \in \N} \mI{(0,\delta_k)}{\ham^{n-1}(\bd{\dnhd{\dom}{\delta}})}{\delta}
		< \infty \,.
	\end{equation*}
	This is the case if, e.g. 	
	\begin{equation*}
		\lim \limits_{\delta \downarrow 0 }
		\ham^{n-1}(\bd{\dnhd{\dom}{\delta}}) =
		\ham^{n-1}(\rbd{\dom}) \,.
	\end{equation*}
	For $\eR \in \encl{\dom}$ and $k \in \N$ set
	\begin{align*}
		\nap_k(\eR) & := \ind{\dnhd{\dom}{\delta_k}}(\eR) \left(1 -
			\frac{1}{\delta_k} \ind{\dnhd{\dom}{\delta_k}\setminus
		\dom} \dist{\dom}{\eR}\right)\\ &  =
				\max\left\{0,\min\left\{1,1-\frac{1}{\delta_k}\distf{\dom}\right\}\right\}
				\,.
	\end{align*}
	Then $\nap_k \in \skpdr{1}{\infty}{\encl{\dom}}{[0,1]}$ is \tLcont{} (cf.
	\cite[p. 47]{clarke_optimization_1984}).
	These functions are called (outer) \textbf{Portmanteau
	functions}\index{Portmanteau functions}.
	Note that by the Coarea formula for functions of bounded variation (cf. \cite[p. 185]{Evans1992})
	\begin{equation*}
		\norm{1}{\D{\nap_k}} =
		\I{(0,1)}{\ham^{n-1}(\nap_k^{-1}(\delta))}{\delta}=
		\mI{(0,\delta_k)}{\ham^{n-1}(\bd{\dnhd{\dom}{\delta}})}{\delta}
		\,.
	\end{equation*}
	Hence, the sequence $\{\nap_{\delta_k}\}_{k\in \N}$ is a \tnormapprox{}
	for $\ind{\dom}$ and the limit function is
	\begin{equation*}
		\nap = \ind{\dom} \,.
	\end{equation*}
	Thus, there exists a \tggtme{} $\nu \in \bawln{\encl{\dom}}$ such that
	for every $\funv \in \dmid{\encl{\dom}}$ the following \textit{Gauß
	formula} holds
	\begin{equation*}
		\divv{\funv}(\dom) = \sI{\bd{\dom}}{\funv}{\nu} \,.
	\end{equation*}
\end{example}
Open \tbvs{} can be treated similarly, as the following example shows.
\begin{example}{Inner \tggtme{}\\}\label{ex:inner_port}
	Let $\encl{\dom}\subset \Rn$ be open and $\dom \subset \encl{\dom}$ be
	a bounded, open \tbvs{} such that $\dist{\dom}{\bd{\encl{\dom}}}>0$.
	Furthermore, assume there exists $\{\delta_k\}_{k \in \N} \subset
	(0,\infty)$ such that $\lim \limits_{k \to \infty} \delta_k = 0$ and
	\begin{equation*}
		\sup \limits_{k \in \N} \mI{(0,\delta_k)}
		{\ham^{n-1}(\bd{\idnhd{\dom}{\delta}})}{\delta} < \infty \,.
	\end{equation*}
	For $k \in \N$ and $\eR \in \encl{\dom}$ set
	\begin{align*}
		\nap_k(\eR) & := \ind{\idnhd{\dom}{\delta_k}}(\eR) +
		\frac{1}{\delta_k} \dist{\dom^c}{\eR}
		\ind{\dom\setminus\idnhd{\dom}{\delta_k}}\\
		& = \min\left\{1,\max\left\{0,\frac{1}{\delta_k}\distf{\dom^c}\right\}\right\}\,.
	\end{align*}
	Then $\nap_k \in \skpdr{1}{\infty}{\encl{\dom}}{[0,1]}$ is \tLcont{} (cf.
	\cite[p. 47]{clarke_optimization_1984}).
	Then as in Example \ref{ex:outer_port}, the sequence $\{\nap_{\delta_k}\}_{k\in \N}$ is a \tnormapprox{} for
	$\ind{\dom}$ and the limit function is
	\begin{equation*}
		\nap = \ind{\dom} \,.
	\end{equation*}
	These functions are called (inner) \textbf{Portmanteau functions}.
	Hence there exists a \tggtme{} $\norme\in \bawln{\encl{\dom}}$ such
	that for every vector field $\funv \in \dmid{\encl{\dom}}$ the following
	\textit{Gauß formula}\index{Gauß formula} holds
	\begin{equation*}
		\divv{\funv}(\dom) = \sI{\bd{\dom}}{\funv}{\norme} \,.
	\end{equation*}
\end{example}
The subsequent corollary illustrates the dependence of the integral with
respect to \tggtme{} on the \tnormapprox{} of $\ind{\dom}$.
\begin{corollary}
	Let $\encl{\dom}\subset \Rn$ and $\dom \in \bor{\encl{\dom}}$ be a
	bounded \tbvs{} such
	that $\dist{\dom}{\bd{\encl{\dom}}}> 0$. Then for any two
	\tnormapprox{}s $\{\nap_k^1\}_{k \in \N}, \{\nap_k^2\}_{k \in \N}
	\subset \skpdr{1}{\infty}{\encl{\dom}}{[0,1]}$ for $\ind{\dom}$, associated
	\tggtme{}s $\norme_1,\norme_2\in \bawln{\encl{\dom}}$ and any $\funv\in \dmid{\encl{\dom}}$
	\begin{equation*}
		\sI{\bd{\dom}}{\funv}{\norme_1 - \norme_2} =
		\I{\bd{\dom}}{\nap_1 - \nap_2}{\divv{\funv}}
	\end{equation*}
	where $\nap_1$ and $\nap_2$ are the limit functions for
	$\{\nap_k^1\}_{k \in N}$ and $\{\nap_k^2\}_{k \in \N}$ respectively.
\end{corollary}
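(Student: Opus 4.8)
The plan is to apply the Gauß formula \eqref{eq:ggt_me} of Theorem \ref{thm:ggt_me} once for each of the two normal measures and then subtract the two resulting identities. By the definition of a normal measure, $\norme_1$ and $\norme_2$ satisfy \eqref{eq:ggt_me} with their respective limit functions, so for every $\funv \in \dmid{\encl{\dom}}$ and each $i \in \{1,2\}$ one has
\[
	\divv{\funv}(\Int{\dom}) + \I{\bd{\dom}}{\nap_i}{\divv{\funv}} = \sI{\bd{\dom}}{\funv}{\norme_i}\,.
\]
Subtracting these two equations eliminates the common volume term $\divv{\funv}(\Int{\dom})$, leaving an identity between a difference of divergence-measure integrals on the left and a difference of surface integrals on the right.

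Next I would collapse each difference into a single integral by linearity. On the left the integration is against the fixed $\sigma$-additive measure $\divv{\funv}$; since $\nap_1$ and $\nap_2$ are bounded and $\divv{\funv} \wac \ham^{n-1}$, both limit functions are $\divv{\funv}$-integrable, and ordinary linearity of the integral in the integrand gives $\I{\bd{\dom}}{\nap_1}{\divv{\funv}} - \I{\bd{\dom}}{\nap_2}{\divv{\funv}} = \I{\bd{\dom}}{\nap_1 - \nap_2}{\divv{\funv}}$. On the right I would invoke linearity of the integral in the measure argument to write $\sI{\bd{\dom}}{\funv}{\norme_1} - \sI{\bd{\dom}}{\funv}{\norme_2} = \sI{\bd{\dom}}{\funv}{\norme_1 - \norme_2}$, which yields exactly the claimed formula.

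The one point that needs care—and the only plausible obstacle—is the well-definedness of the abbreviated surface integral $\sI{\bd{\dom}}{\funv}{\norme_1 - \norme_2}$, that is, the requirement $\cor{(\norme_1 - \norme_2)_k} \subset \bd{\dom}$ for each component $k$. This I would derive from Theorem \ref{thm:ggt_me}, which guarantees $\cor{\norme_{1,k}} \subset \bd{\dom}$ and $\cor{\norme_{2,k}} \subset \bd{\dom}$: since $\tova{(\norme_1 - \norme_2)_k} \leq \tova{\norme_{1,k}} + \tova{\norme_{2,k}}$, any open neighbourhood charged by the difference must be charged by one of the summands, so $\cor{(\norme_1 - \norme_2)_k} \subset \cor{\norme_{1,k}} \cup \cor{\norme_{2,k}} \subset \bd{\dom}$. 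With this inclusion in hand the value of $\sI{\bd{\dom}}{\funv}{\norme_1 - \norme_2}$ is independent of the chosen $\delta$, and the linearity step above is justified on the common neighbourhood $\dnhd{\bd{\dom}}{\delta} \cap \dom$. Apart from this bookkeeping, the statement is an immediate consequence of the Gauß formula.
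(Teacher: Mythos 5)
Your proposal is correct and is exactly the intended derivation: the paper states this corollary without proof as an immediate consequence of Theorem \ref{thm:ggt_me}, and subtracting the two instances of Equation \eqref{eq:ggt_me} (the common volume term $\divv{\funv}(\Int{\dom})$ cancelling) together with linearity in the integrand and in the measure is precisely the argument. Your additional bookkeeping, namely that $\cor{(\norme_1-\norme_2)_k}\subset\cor{\norme_{1,k}}\cup\cor{\norme_{2,k}}\subset\bd{\dom}$ so that the abbreviated surface integral is well-defined and $\delta$-independent, is a worthwhile point the paper leaves implicit.
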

\begin{remark}
	In particular, if $\tova{\divv{\funv}}(\bd{\dom})=0$, 
	\begin{equation*}
		\sI{\bd{\dom}}{\funv}{\norme}
	\end{equation*}
	is independent of the choice of the \tnormapprox{}.
\end{remark}
Since $\norme{}$ is a bounded \tme{}, all essentially bounded vector fields
$\funv$ are integrable with respect to this \tme{}. This leads to the question,
whether $\funv \in \lpbmd{1}{\norme}{\encl{\dom}}$ for unbounded vector fields.
The next example answers this negatively. The function is similar to the one in \cite[p. 100]{chen_divergencemeasure_1999}.
\begin{example}\label{ex:tang_unbound}
	Let $\encl{\dom}:=(0,1)^2 \subset \R^2$ and $\dom := \{(x,y)\in R^2 | x \leq y \}\cap \ball{\frac{1}{2}(1,1)}{\frac{1}{4}}$. Furthermore let $\norme \in \bawln{\encl{\dom}}$ be a normal measure for $\dom$ and $\funv\in \dmone$ defined by
	\begin{equation*}
	\funv(\eR,\eRR) := |\eR-\eRR|^{-\frac{1}{2}}\begin{pmatrix} 1 \\ 1 \end{pmatrix}
	\end{equation*}
	for $x\neq y$.
	Then $\divv{\funv} = 0$. In order to see that, let $\Delta := \{(\eR,\eR) \in \encl{\dom} | \eR \in \R\}$, $1 > \delta > 0$ and note that for $\sfun \in \Conec{\encl{\dom}}$
	\begin{align*}
		\I{\encl{\dom}}{\funv \cdot \Deriv{\sfun}}{\lem} & = \I{\encl{\dom} \cap \dnhd{\Delta}{\delta}}{\funv \cdot \Deriv{\sfun}}{\lem} + \I{\encl{\dom} \setminus \dnhd{\Delta}{\delta}}{\funv \cdot \Deriv{\sfun}}{\lem} \\
																																								       & = \I{\encl{\dom}\cap\dnhd{\Delta}{\delta}}{\funv \cdot \Deriv{\sfun}}{\lem} - \I{\encl{\dom} \setminus \dnhd{\Delta}{\delta}}{\sfun\divv{\funv}}{\lem} - \I{\bd{(\encl{\dom} \setminus \dnhd{\Delta}{\delta})}}{\sfun \funv \cdot \normal{\dnhd{\Delta}{\delta}}}{\ham^1} 
	\end{align*}
Since $\divv{\funv} = 0$ outside of $\dnhd{\Delta}{\delta}$ and $\funv \cdot
\normal{\dnhd{\Delta}{\delta}} = \pm \begin{pmatrix} 1 \\ -1 \end{pmatrix} \cdot \funv = 0$ on $\encl{\dom}$ 
	\begin{equation*}
		\I{\encl{\dom}}{\funv \cdot \Deriv{\sfun}}{\lem} = \I{\encl{\dom} \cap \dnhd{\Delta}{\delta}}{\funv \cdot \Deriv{\sfun}}{\lem} \xrightarrow{\delta \downarrow 0} 0 \,.
	\end{equation*}
	Note that for every $\const >0$ with $\frac{1}{\const^2}>\delta >0$
	\begin{equation*}
		\dnhd{\dom}{\delta} \cap \left\{(x,y) \in \encl{\dom}\setpipe 
		|\funv(x,y)| \geq \const\right\} \supset
		\dnhd{\Delta}{\delta} \cap \dnhd{\dom}{\delta}
	\end{equation*}
	Hence for every $\funv' \in \liblnd{\encl{\dom}}$
	\begin{align*}
		\tova{\norme}& \left(\dnhd{\dom}{\delta} \cap \left\{(x,y) \in
		\encl{\dom} \setpipe |\funv(x,y) - \funv'(x,y)| \geq \eps\right\}\right) \\
		& \geq \tova{\norme}\left(\dnhd{\dom}{\delta} \cap \left\{(x,y)
\in \encl{\dom} \setpipe |\funv (x,y)| \geq \normi{\funv'}+ \eps \right\}\right) \\
				  & \geq \tova{\norme}
		\left(\dnhd{\Delta}{\delta}\cap \dnhd{\dom}{\delta}\right) \geq
		\ham^1 \left(\Delta \cap
			\ball{\frac{1}{2}(1,1)}{\frac{1}{4}}\right) =
			\frac{1}{2} > 0
	\end{align*}
for every $0 < \delta < \frac{1}{(\eps + \normi{\funv'})^2}$.

Hence, there is no sequence $\{\funv_k\}_{k \in \N} \subset
\liblnd{\encl{\dom}}$ converging \tconvim{} to $\funv$. In particular, $\funv$
cannot be approximated in measure by \tsimf{} functions.
\end{example}
\begin{remark}
	The preceding example indeed works for $\encl{\dom} = (0,1)^2$ and every $\funv \in \dmone$ such that for some $\sfun : \R \to \R$ satisfying
	\begin{enumerate}
		\item $\sfun$ is continuously differentiable on $\R\setminus\{0\}$
		\item $\lim \limits_{\eR \to 0} \sfun(\eR) = \infty$
		\item $g : \encl{\dom} \to \R : (\eR,\eRR) \mapsto \sfun(\eR-\eRR)$ is integrable on $\encl{\dom}$
	\end{enumerate}
	it holds
	\begin{equation*}
		\funv = g \cdot \begin{pmatrix}
			1 \\ 1
		\end{pmatrix} \,.
	\end{equation*}
	The essential point is that $\funv$ is tangential to the curve where it is unbounded.
	Hence, there are many vector fields which cannot even be approximated
	in measure.
\end{remark}
The following example gives a vector field that only blows up at one point and
still is not integrable with respect to \tggtme{}. The function is the same as
in \cite[p. 403]{chen_theory_2001}.
\begin{example}\label{ex:dirac_mass}
	Let $n=2$, $\encl{\dom}:= \ball{0}{1}\subset \R^2$ and 
	\begin{equation*}
		\dom:= \ball{0}{\frac{1}{2}} \cap \{(\eR,\eRR) \in \R^2|
		\eR,\eRR\geq 0\} \,.
	\end{equation*} 
	Furthermore, let 
	\begin{equation*}
		\funv: \encl{\dom} \to \R^2 : \eR \mapsto \frac{1}{2\pi}\frac{\eR}{|\eR|^2} \,.
	\end{equation*}
	Then $\funv \in \dmone$ and $\divv{\funv}=\drm_0$.
	Let $\{\nap_k\}_{k \in\N}$ be the canonical \tnormapprox{} from
	Example \ref{ex:can_norme}. 
	Let $\norme \in \bawln{\encl{\dom}}$ be the \tggtme{} associated with this \tnormapprox{}. Assume that $\funv \in \lpbmd{1}{\norme}{\encl{\dom}}$. Then
	\begin{equation*}
		\I{\encl{\dom}}{|\funv|}{\tova{\norme}} < \infty \,.
	\end{equation*}
	But 
	\begin{equation*}
		\I{\encl{\dom}}{|\funv|}{\tova{\norme}} \geq \frac{1}{2\pi}
		\I{\bd{\dom}}{\frac{1}{|x|}}{\ham^1} \geq \frac{1}{2\pi}\I{\left(0,\frac{1}{2}\right)}{\frac{1}{t}}{t}= \infty \,,
	\end{equation*}
	a contradiction. Hence $\funv \notin \lpbmd{1}{\norme}{\encl{\dom}}$.
\end{example}
Up to now, the Gauss Theorem was given for sets that have a positive distance
to the boundary. In order to complement this result, the following theorem
states the theorem for the whole set, in the case of $\encl{\dom} = \dom$.
\begin{theorem}\label{thm:ggt_me_all}
	Let $\dom \in \bor{\Rn}$ be a bounded open \tbvs{}. If there exists
	$\delta_0>0$ and $\const >0$ such that for \tmaet{} $\delta \in
	(0,\delta_0)$
	\begin{equation*}
		\ham^{n-1}(\bd{\idnhd{\dom}{\delta}}) \leq \const \,,
	\end{equation*}
	then there exists $\norme \in
	\bawln{\dom}$ such that for every $k \in \N$, $1\leq k \leq n$ 
	\begin{equation*}
		\cor{\norme_k} \subset \bd{\dom}
	\end{equation*}
	and for all $\funv \in \dmid{\dom}$ the following \textit{Gauß
	formula}\index{Gauß formula} holds
	\begin{equation*}
		\sI{\bd{\dom}}{\funv}{\norme} = \divv{\funv}(\dom) \,.
	\end{equation*}
	and for every open set $\bals \subset \Rn$
	\begin{equation*}
		\tova{\norme}(\bals \cap \dom) \geq
		(\reme{\ham^{n-1}}{\rbd{\dom}})(\bals) \,.
	\end{equation*}
	Furthermore, $\norme$ is minimal in the sense, that if $\norme' \in
	\bawln{\dom}$ satisfies the equations above, then
	\begin{equation*}
		\norm{}{\norme} \leq \norm{}{\norme'} \,.
	\end{equation*}
	For every $\bvs\in \bor{\dom}$ having finite perimeter in $\Rn$
	\begin{equation*}
		\norme(\bvs) = -\I{\rbd{\bvs}\cap
		\dom}{\normal{\bvs}}{\ham^{n-1}} \,.
	\end{equation*}
\end{theorem}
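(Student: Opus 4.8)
The plan is to realise the target functional $\funv \mapsto \divv{\funv}(\dom)$ as an element of $\dual{(\liblnd{\dom})}$ and to invoke the representation of that dual space. Concretely, I would set
\begin{equation*}
	\de_0 : \dmid{\dom} \to \R, \qquad \funv \mapsto \divv{\funv}(\dom),
\end{equation*}
verify that $\de_0$ is linear and bounded for the $L^\infty$-norm on the subspace $\dmid{\dom} \subset \liblnd{\dom}$, extend it by the Hahn--Banach Theorem \cite[p. 63]{dunford_linear_1988} to a norm-minimal $\de \in \dual{(\liblnd{\dom})}$, and then apply Proposition \ref{prop:dual_libln} to obtain $\norme \in \bawln{\dom}$ with $\divv{\funv}(\dom) = \I{\dom}{\funv}{\norme}$ for all $\funv \in \dmid{\dom}$ and $\norm{}{\norme} = \norm{}{\de_0}$. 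Norm-minimality of $\norme$ is then inherited directly from that of the Hahn--Banach extension, exactly as in Theorem \ref{thm:ggt_me}: any competitor $\norme'$ satisfying the Gauß formula represents an extension of $\de_0$, hence has norm at least $\norm{}{\de_0} = \norm{}{\norme}$.

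The technical heart is the boundedness of $\de_0$, and this is where the perimeter hypothesis enters. Since $\encl{\dom} = \dom$, the inner Portmanteau functions of $\dom$ themselves fail to have compact support in $\dom$, so Theorem \ref{thm:ggt_me} does not apply to $\dom$ directly. Instead I would exhaust $\dom$ by the open sets $\idnhd{\dom}{\delta_k}$ along a sequence $\delta_k \downarrow 0$ chosen in the full-measure set where $\ham^{n-1}(\bd{\idnhd{\dom}{\delta}}) \leq \const$; these are \tbvs{} for \tmaet{} $\delta$ by the Coarea formula. As $\idnhd{\dom}{\delta_k} \uparrow \dom$ and $\divv{\funv}$ is a finite measure, $\divv{\funv}(\idnhd{\dom}{\delta_k}) \to \divv{\funv}(\dom)$. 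For each fixed $k$ the inner Portmanteau functions $\nap_l$ of $\idnhd{\dom}{\delta_k}$ are \tLcont{} with compact support in $\dom$ (their support is $\cl{\idnhd{\dom}{\delta_k}}$, which is a compact subset of $\dom$), so integration by parts for divergence-measure fields \cite[p. 252]{chen_divergence-measure_2005} gives $\I{\dom}{\nap_l}{\divv{\funv}} = -\I{\dom}{\Deriv{\nap_l}\cdot \funv}{\lem}$, and letting $l \to \infty$ yields $\divv{\funv}(\idnhd{\dom}{\delta_k}) = \lim_l -\I{\dom}{\Deriv{\nap_l}\cdot\funv}{\lem}$. Using $\idnhd{\idnhd{\dom}{\delta_k}}{r} = \idnhd{\dom}{\delta_k+r}$ and the Coarea identity, $\norm{1}{\Deriv{\nap_l}} = \mI{(0,s_l)}{\ham^{n-1}(\bd{\idnhd{\dom}{\delta_k + r}})}{r} \leq \const$, whence $|\divv{\funv}(\idnhd{\dom}{\delta_k})| \leq \const\,\normi{\funv}$ uniformly in $k$ and, in the limit, $|\de_0(\funv)| \leq \const\,\normi{\funv}$. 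I expect precisely this estimate, in particular the need to route the integration by parts through the compactly supported inner approximations of $\idnhd{\dom}{\delta_k}$ rather than of $\dom$, to be the main obstacle.

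To locate the core I would show $\tova{\norme_k}(\idnhd{\dom}{\delta}) = 0$ for every $\delta > 0$. For $\funv \in \dmid{\dom}$ and \tmaet{} $\delta$ the truncation $\funv \cdot \ind{\idnhd{\dom}{\delta}}$ again lies in $\dmid{\dom}$ and is compactly supported in $\dom$, so its total divergence over $\dom$ vanishes and the Gauß formula gives $\I{\idnhd{\dom}{\delta}}{\funv}{\norme} = 0$; the norm-minimality of $\norme$ then forces $\tova{\norme_k}(\idnhd{\dom}{\delta}) = 0$, as in the corresponding step of Theorem \ref{thm:ggt_me}, and this extends to all $\delta$ by monotonicity. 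The essential conceptual point is that, because $\norme$ is only finitely additive, this does \emph{not} entail $\tova{\norme_k}(\dom) = 0$ even though $\idnhd{\dom}{\delta} \uparrow \dom$; rather it confines $\cor{\norme_k}$ to $\cl{\dom}\setminus \dom = \bd{\dom}$, since any interior point admits a neighbourhood contained in some $\idnhd{\dom}{\delta}$. This also legitimises the abbreviation $\sI{\bd{\dom}}{\funv}{\norme}$ and completes the Gauß formula.

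The two remaining assertions follow by specialisation. For the lower bound I would mirror Proposition \ref{prop:tova_norme}: testing the Gauß formula against $\sfun \in \ConeN{\cl{\dom}}$ and using the Gauß--Green formula for the \tbvs{} $\dom$ \cite[p. 209]{Evans1992} gives $\sI{\bd{\dom}}{\sfun}{\norme} = \I{\rbd{\dom}}{\sfun \cdot \normal{\dom}}{\ham^{n-1}}$; because such $\sfun$ need not vanish on $\bd{\dom}$, I would pass through the Radon-measure representation of $\norme$ on $\cor{\norme}\subset \bd{\dom}$ from Proposition \ref{prop:ram_on_core} to localise to an open set $\bals$, and take the supremum over $\sfun$ supported in $\bals$ with $\normi{\sfun}\leq 1$ to recover $\tova{\Deriv{\ind{\dom}}}(\bals) = (\reme{\ham^{n-1}}{\rbd{\dom}})(\bals)$. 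Finally, for a \tbvs{} $\bvs \in \bor{\dom}$ the field $e_k \cdot \ind{\bvs}$ lies in $\dmid{\dom}$ with $\divv{(e_k\cdot\ind{\bvs})} = \pD{(\ind{\bvs})}{k} = -(\normal{\bvs})_k\,\reme{\ham^{n-1}}{\rbd{\bvs}}$, so inserting it into the Gauß formula yields $\norme_k(\bvs) = \divv{(e_k\cdot\ind{\bvs})}(\dom) = -\I{\rbd{\bvs}\cap\dom}{(\normal{\bvs})_k}{\ham^{n-1}}$, which assembles into the stated vector identity.
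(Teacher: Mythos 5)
Your proposal is correct and follows essentially the same route as the paper's proof: the paper likewise realises $\de_0(\funv) = \divv{\funv}(\dom)$ as a bounded functional on $\dmid{\dom}\subset\liblnd{\dom}$ — using the single cutoff sequence $\nap_k = \min\{1,\max\{0,k\distf{\bd{\dom}}-1\}\}$, whose gradients satisfy $\norm{1}{\Deriv{\nap_k}}\leq \const$ by the Coarea formula, in place of your two-step exhaustion by $\idnhd{\dom}{\delta_k}$ — then applies Hahn--Banach and Proposition \ref{prop:dual_libln}, localises the core via truncation by $\ind{\idnhd{\dom}{\delta}}$ combined with norm-minimality, and obtains the perimeter lower bound and the formula on finite-perimeter sets exactly as you do. The only inessential deviation is your detour through Proposition \ref{prop:ram_on_core} for the total-variation bound, which the paper avoids by testing the established Gauß formula directly against fields $\sfun \in \ConecN{\bals}$ with $\normi{\sfun}\leq 1$.
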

\begin{remark}
	Note that if $\dom\in\bor{\dom}$ is only supposed to be open and 
	\begin{equation*}
		\ham^{n-1}(\bd{\idnhd{\dom}{\delta}}) \leq \const
	\end{equation*}
	is required, then $\dom$ is necessarily a \tbvs{}, due to the \ttova{}
	being lower semi-continuous.

	On the other hand, this condition loosely resembles the definition of
	Lipschitz deformable boundaries defined in
	\cite[p. 94]{chen_divergencemeasure_1999}, but is much more general.
\end{remark}
\begin{proof}
	Let $\{\nap_k\}_{k\in \N} \subset \skpdr{1}{\infty}{\Rn}{[0,1]}$
	be such that
	\begin{align*}
		\nap_k & := \ind{\idnhd{\dom}{\frac{2}{k}}} +
		\ind{\left(\idnhd{\dom}{\frac{1}{k}}\setminus\idnhd{\dom}{\frac{2}{k}}\right)}
		\left(k\distf{\bd{\dom}}-1 \right)  \\
		& = \min\left\{1,\max\left\{0,k\distf{\bd{\dom}}-1\right\}\right\}\,.
	\end{align*}
	See \cite[p. 47]{clarke_optimization_1984} for reference.
	Then
	\begin{equation*}
		|\Deriv{\nap_k}| =
		k
		\ind{\left(\idnhd{\dom}{\frac{1}{k}}\setminus\idnhd{\dom}{\frac{2}{k}}\right)}
		\,.
	\end{equation*}
	Then the Coarea Formula \cite[p. 112]{Evans1992} implies
	\begin{equation*}
		\norm{1}{\Deriv{\nap_k}} =
		\I{\idnhd{\dom}{\frac{1}{k}}\setminus\idnhd{\dom}{\frac{2}{k}}}{k}{\lem}
		=
		\mI{(\frac{1}{k},\frac{2}{k})}{\ham^{n-1}(\bd{\idnhd{\dom}{\delta}})}{\delta}\leq
		\const \,.
	\end{equation*}
	As in the proof of Theorem \ref{thm:ggt_me}, 
	\begin{equation*}
		\lim \limits_{k \to \infty} \I{\dom}{\funv \cdot
		\Deriv{\nap_k}}{\lem} = - \lim \limits_{k \to \infty}
		\I{\dom}{\nap_k}{\divv{\funv}} = -\I{\dom}{1}{\divv{\funv}} =
		\divv{\funv}(\dom) \,.
	\end{equation*}
	On the other hand, for every $k\in \N$
	\begin{equation*}
		\left | \I{\dom}{\funv\cdot \Deriv{\nap_k}}{\lem}\right | \leq
		\normi{\funv} \norm{1}{\Deriv{\nap_k}} \leq \normi{\funv} \sup \limits_{k \in
		\N} \norm{1}{\Deriv{\nap_k}} \leq \const \normi{\funv} \,.
	\end{equation*}
	Hence
	\begin{equation*}
		\de_0 : \dmid{\dom} \to \R: \funv \mapsto \divv{\funv}(\dom)
	\end{equation*}
	is a continuous linear functional on a subspace of $\liblnd{\dom}$.
	\begin{change}
		The Hahn-Banach Theorem (cf. \cite[p.63]{dunford_linear_1988})
		implies the existence of a \tme{} $\norme \in \bawln{\dom}$
		such that for all $\funv \in \dmi$
		\begin{equation}\label{eq:ggt_me_all}
			\divv{\funv}(\dom) = \I{\dom}{\funv}{\norme} \,.
		\end{equation}
		Furthermore, $\norm{}{\norme} = \norm{}{\de_0}$, implying
		minimality in the norm. Now by
		\cite[p. 101]{chen_divergencemeasure_1999}, for almost every $\delta > 0$ and every
		$\funv \in \dmid{\dom}$
		\begin{equation*}
			\funv \cdot \ind{\idnhd{\dom}{\delta}} \in \dmid{\dom}
		\end{equation*}
		and $\funv \cdot \ind{\idnhd{\dom}{\delta}}$ has compact
		support in $\dom$. By \cite[p. 252]{chen_divergence-measure_2005}
		\begin{equation*}
			\divv{(\funv \cdot \ind{\idnhd{\dom}{\delta}})}(\dom) =
			0 \,.
		\end{equation*}
		Thus, for every $\funv \in \dmid{\dom}$
		\begin{align*}
			\I{\dom}{\funv}{\norme} & = \divv{\funv}(\dom) \\
						& = \divv{(\funv \cdot
		\ind{\dom\setminus\idnhd{\dom}{\delta}})}(\dom)
		+\divv{(\funv\cdot \ind{\idnhd{\dom}{\delta}})}(\dom) \\
		& = \divv{(\funv\cdot \ind{\dom\setminus
\idnhd{\dom}{\delta}})}(\dom) \\
& = \I{\dom}{\funv}{\reme{\norme}{(\dom\setminus\idnhd{\dom}{\delta})}} \,.
		\end{align*}
		Thus, $\reme{\norme}{(\dom\setminus \idnhd{\dom}{\delta})}$
		also satisfies Equation \eqref{eq:ggt_me_all}. The minimality
		of $\norm{}{\norme}$ then implies
		\begin{equation*}
			\norm{}{\reme{\norme}{\idnhd{\dom}{\delta}}} = 0 \,.
		\end{equation*}
		Since $\delta > 0$ can be arbitrarily small
		\begin{equation*}
			\cor{\norme} \subset \bd{\dom} \,.
		\end{equation*}
		Note that for $\bals \in \bor{\dom}$ having finite perimeter in $\Rn$
		\begin{equation*}
			e_k \ind{\bals} \in \dmid{\dom} \,.
		\end{equation*}
		In order to see this, compute
		\begin{equation*}
			\divv{(e_k \cdot \ind{\bvs})} = \pD{\ind{\bvs}}{k} =
			-\reme{\acme{\normal{\bvs}_k}{\ham^{n-1}}}{\rbd{\bvs}} \,.
		\end{equation*}
		In particular
		\begin{equation*}
			\norme(\bvs) = -\I{\rbd{\bvs}\cap
			\dom}{\normal{\bvs}}{\ham^{n-1}} \,.
		\end{equation*}
		Now, let $\bals \subset\Rn$ be open. Then using the Gauß
		Theorem from Evans (cf. \cite[p.209]{Evans1992})
		\begin{align*}
			\tova{\norme}(\bals \cap \dom) & \geq \sup
			\limits_{\substack{\sfun \in
				\ConecN{\bals},\\\normi{\sfun}\leq 1}}
				\I{\dom}{\sfun}{\norme} \\
				& \geq \sup \limits_{\substack{\sfun \in
			\ConecN{\bals},\\\norm{C}{\sfun}\leq 1}}
			\I{\dom}{\sfun}{\norme} \\
			& \overset{\text{\eqref{eq:ggt_me_all}}}{=}\sup
			\limits_{\substack{\sfun \in \ConecN{\bals},\\\norm{C}{\sfun}\leq 1}} \divv{\sfun}(\dom) \\
				& = \sup \limits_{\substack{\sfun \in
			\ConecN{\bals},\\\norm{C}{\sfun}\leq 1}}
			\I{\rbd{\dom}}{\sfun\cdot \normal{\dom}}{\ham^{n-1}} \\
				& = \tova{\Deriv{\ind{\dom}}}(\bals) \\
				& = (\reme{\ham^{n-1}}{\rbd{\dom}})(\bals) \,.
		\end{align*}
	\end{change}
\end{proof}
\begin{remark}
	Theorem \ref{thm:ggt_me_all} still holds true for open $\dom$ such that
	there exists $\{\delta_k\}_{k \in \N}\subset (0,\infty)$ such that
	$\lim \limits_{k \to \infty} \delta_k = 0$ and
	\begin{equation*}
		\sup \limits_{k \in \N}
		\mI{\left(\frac{\delta_k}{2},\delta_k\right)}{\ham^{n-1}(\idnhd{\dom}{\delta})}{\delta} <
		\infty \,.
	\end{equation*}
	The arguments are the same as in Example \ref{ex:outer_port} and
	Example \ref{ex:inner_port}.
\end{remark}
The following proposition is a new Gauß-Green formula for essentially bounded
functions of bounded variation and essentially bounded vector fields having
divergence measure. In contrast to the literature, where only continuous scalar
fields were treated (cf. \cite[p. 448]{silhavy_divergence_2009},
\cite[p. 1014]{chen_structure_2011}), this is a new quality.
\begin{proposition}\label{prop:ntfunv_dmi}
	Let $\encl{\dom}\subset \Rn$ be open and $\dom \in \bor{\encl{\dom}}$
	be a bounded \tbvs{} such that $\dist{\dom}{\bd{\encl{\dom}}}>0$.
	Furthermore, let $\{\nap_k\}_{k \in \N} \subset
	\skpdr{1}{\infty}{\encl{\dom}}{[0,1]}$ be a \tnormapprox{} for
	$\ind{\dom}$
	and $\norme \in \bawln{\encl{\dom}}$ be an associated \tggtme{}.
	
	Then for every $\funv \in \dmi$ the set function
	\begin{equation*}
		\ntfunv{\funv} : \bor{\encl{\dom}} \to \R: \bals \mapsto
		\I{\bals}{\funv}{\norme}
	\end{equation*}
	is an element of $\bawl{\encl{\dom}}$ with 
	\begin{equation*}
		\cor{\ntfunv{\funv}} \subset \bd{\dom}
	\end{equation*}
	and for every compactly supported $\bvfun \in
	\BVd{\encl{\dom}} \cap \libld{\encl{\dom}}$ the following \textit{Gauß
	formula}\index{Gauß formula} holds
	\begin{equation*}
		\divv{(\bvfun \cdot \funv)}(\Int{\dom}) +
		\I{\bd{\dom}}{\nap}{\divv{(\bvfun \cdot \funv)}} =
		\sI{\bd{\dom}}{\bvfun}{\ntfunv{\funv}} =
		\sI{\bd{\dom}}{\bvfun \cdot \funv}{\norme} \,.
	\end{equation*}
	Call $\ntfunv{\funv}$ \textbf{normal trace of $\funv$ on $\bd{\dom}$}.
\end{proposition}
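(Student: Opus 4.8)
The plan is to dispose of the three assertions in turn, the decisive move being to reduce the Gauß formula to Theorem \ref{thm:ggt_me} applied to the product field $\bvfun\cdot\funv$.

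First I would verify that $\ntfunv{\funv}\in\bawl{\encl{\dom}}$. Writing $\ntfunv{\funv}(\bals)=\sum_{k=1}^n\I{\bals}{\funv_k}{\norme_k}$, finite additivity is inherited from additivity of the integral over disjoint Borel sets, and boundedness follows from $|\ntfunv{\funv}(\bals)|\leq\sum_k\normi{\funv_k}\,\tova{\norme_k}(\encl{\dom})\leq\normi{\funv}\,\norm{}{\norme}$. For weak absolute continuity I would use that each $\norme_k$ satisfies $\norme_k\wac\lem$; since every Borel subset of a $\lem$-null set is again $\lem$-null, this upgrades to $\tova{\norme_k}\wac\lem$, so that $\lem(\bals)=0$ yields $\tova{\norme_k}(\bals)=0$ and hence, the integrand being essentially bounded, $\I{\bals}{\funv_k}{\norme_k}=0$. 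Thus $\lem(\bals)=0$ forces $\ntfunv{\funv}(\bals)=0$.

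For $\cor{\ntfunv{\funv}}\subset\bd{\dom}$ I would exploit the concentration of $\norme$ established inside the proof of Theorem \ref{thm:ggt_me}, namely $\tova{\norme_k}(\idnhd{\dom}{\delta})=0$ and $\tova{\norme_k}(\encl{\dom}\setminus\dnhd{\dom}{\delta})=0$ for every $\delta>0$ and every $k$. If $\eR\notin\bd{\dom}$, a small ball $\ball{\eR}{r}$ lies either inside $\Int{\dom}$, hence in some $\idnhd{\dom}{\delta_0}$, or in the complement of $\cl{\dom}$; in both cases $\tova{\norme_k}(\ball{\eR}{r}\cap\dom)=0$ for all $k$ by monotonicity. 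As the integral of a bounded function over a $\tova{\norme_k}$-null set vanishes, $\tova{\ntfunv{\funv}}(\ball{\eR}{r}\cap\dom)=0$, so $\eR\notin\cor{\ntfunv{\funv}}$. This also makes $\sI{\bd{\dom}}{\bvfun}{\ntfunv{\funv}}$ well-defined.

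For the formula itself, the essential step is that $\bvfun\cdot\funv$ again belongs to $\dmi$. Since $\bvfun\in\BVd{\encl{\dom}}\cap\libld{\encl{\dom}}$ is compactly supported and $\funv\in\dmi$, the product rule for divergence-measure fields (cf.\ \cite{chen_gauss-green_2009}) gives $\bvfun\cdot\funv\in\dmi$ with compact support in $\encl{\dom}$. Applying Theorem \ref{thm:ggt_me} to $\bvfun\cdot\funv$ then yields $\divv{(\bvfun\cdot\funv)}(\Int{\dom})+\I{\bd{\dom}}{\nap}{\divv{(\bvfun\cdot\funv)}}=\sI{\bd{\dom}}{\bvfun\cdot\funv}{\norme}$, which is the outer chain of equalities. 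The middle equality $\sI{\bd{\dom}}{\bvfun}{\ntfunv{\funv}}=\sI{\bd{\dom}}{\bvfun\cdot\funv}{\norme}$ is a change-of-density statement: componentwise $\funv_k$ is the density of $\bals\mapsto\I{\bals}{\funv_k}{\norme_k}$ with respect to $\norme_k$, and I would establish $\I{\als}{\bvfun}{\ntfunv{\funv}}=\sum_k\I{\als}{\bvfun\,\funv_k}{\norme_k}$ first for simple $\bvfun$, then in general by approximating $\bvfun$ in measure and passing to the limit via dominated convergence for the Daniell integral, using essential boundedness of $\bvfun$ and $\funv$; evaluating at $\als=\dnhd{\bd{\dom}}{\delta}\cap\dom$ for small $\delta$ gives the claim.

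The main obstacle is the product rule $\bvfun\cdot\funv\in\dmi$: in contrast to the Lipschitz scalar factors already handled in Theorem \ref{thm:ggt_me}, here $\bvfun$ is only $BV\cap L^\infty$, so the full Leibniz rule for divergence-measure fields must be invoked and the resulting product must be checked to possess a finite, compactly supported divergence measure. A secondary technical point is the change-of-density identity in the finitely additive setting, where measurability and integrability of $\bvfun$ with respect to $\ntfunv{\funv}$ have to be controlled through convergence in measure rather than through a pointwise Radon-Nikodym derivative.
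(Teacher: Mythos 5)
Your proposal matches the paper's proof in all essentials: the product rule $\bvfun\cdot\funv\in\dmi$ taken from the literature, the application of Theorem \ref{thm:ggt_me} to the product field to get the outer equality, the localisation of $\cor{\ntfunv{\funv}}$ inside $\bd{\dom}$ via the concentration of $\norme$ near the boundary, and the simple-function approximation (using essential boundedness of $\bvfun$) for the identity $\sI{\bd{\dom}}{\bvfun}{\ntfunv{\funv}}=\sI{\bd{\dom}}{\bvfun\cdot\funv}{\norme}$. The only difference is that you make the weak absolute continuity of $\ntfunv{\funv}$ with respect to $\lem$ explicit, which the paper leaves implicit in the membership claim $\ntfunv{\funv}\in\bawl{\encl{\dom}}$; this is a welcome but inessential addition.
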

\begin{proof}
	Note that	
	\begin{equation*}
		\bvfun \cdot \funv \in \dmi \,.
	\end{equation*}
	See \cite[p. 1014]{chen_structure_2011} for reference.
	Hence
	\begin{equation*}
		\divv{(\bvfun \cdot \funv)}(\Int{\dom}) +
		\I{\bd{\dom}}{\nap}{\divv{(\bvfun \cdot \funv)}} =
		\sI{\bd{\dom}}{\bvfun \cdot \funv}{\norme}\,.
	\end{equation*}
	Note that for every $\bals \in \bor{\encl{\dom}}$
	\begin{equation*}
		\left |\I{\bals}{\funv}{\norme}\right | \leq \normi{\funv}
		\tova{\norme}(\bals)  \,,
	\end{equation*}
	whence
	\begin{equation*}
		\ntfunv{\funv} \in \bawl{\encl{\dom}}\,.	
	\end{equation*}
	Since for every $\bals \in \bor{\encl{\dom}}$
	\begin{equation*}
		\ntfunv{\funv}(\bals) = \I{\bals}{\funv}{\norme} = \I{\bals
			\cap (\dnhd{\dom}{\delta} \setminus
		\idnhd{\dom}{\delta})}{\funv}{\norme}
	\end{equation*}
	the \tcor{} of $\ntfunv{\funv}$ is a subset of $\bd{\dom}$.

	Let $\eps >0$. Since $\fun \in \libld{\encl{\dom}}$, there exist $m
	\in \N$, $\{\eRR_k\}_{k=\mN}^m \subset \R$ and $\{\bals_k\}_{k=\mN}^m$
	pairwise disjoint, such that
	\begin{equation*}
		\normi{\eRR_k - \fun\cdot \ind{\bals_k}}\leq \eps \text{ and } \bigcup\limits_{k=\mN}^m \bals_k = \encl{\dom}
	\end{equation*}
	Set $\simf := \sum \limits_{k=\mN}^m \eRR_k \ind{\bals_k}$. Then
	\begin{align*}
		\left |\sI{\bd{\dom}}{\fun \funv}{\norme} -
		\sI{\bd{\dom}}{\fun}{\ntfunv{\funv}} \right | & \leq \left |
		\sI{\bd{\dom}}{(\fun - \simf)\funv}{\norme} \right | + \left
		|\sI{\bd{\dom}}{\simf \funv}{\norme} -
		\sI{\bd{\dom}}{\simf}{\ntfunv{\funv}} \right | \\
			& + \left | \sI{\bd{\dom}}{\fun -
		\simf}{\ntfunv{\funv}} \right | \\
		& \leq \eps \normi{\funv}\tova{\norme}(\encl{\dom}) + 0 + \eps
		\tova{\ntfunv{\funv}}(\encl{\dom}) \,.
	\end{align*}
	Since $\eps >0$ was arbitrary
	\begin{equation*}
		\sI{\bd{\dom}}{\fun\funv}{\norme} =
		\sI{\bd{\dom}}{\fun}{\ntfunv{\funv}} \,.	
	\end{equation*}
\end{proof}
\section{Unbounded Vector Fields and Open Sets}
In the previous section, general Gauß formulas for essentially bounded vector
fields having divergence measure were presented. Example \ref{ex:tang_unbound}
and \ref{ex:dirac_mass} showed that it is in general not possible to integrate
unbounded vector fields with respect to the normal measures obtained.
In Proposition \ref{prop:ntfunv_dmi}, the \tme{} $\ntfunv{\funv}$ was presented
as a notion of normal trace.

In the following, this is carried over to the case of unbounded vector
fields. Therefore, a result due to Silhavy (cf. \cite{silhavy_divergence_2009})
is improved upon. In particular, Silhavy proved that for $\funv \in
\dmpd{1}{\encl{\dom}}$ there exists a continuous linear functional on
$\Lcspace{\bd{\dom}}$, the space of \tLcont{} functions on $\bd{\dom}$, balancing
the volume part of the Gauß formula. The following exposition proves that
this functional can be represented by the sum of a \tRaM{} $\ntfunv{\funv}$ and a \tme{}
$\ntfunvd{\funv} \in \bawln{\encl{\dom}}$ with \tcor{} on the boundary.
The arguments from Silhavy are retraced, in order to give a self-contained
proof of the main theorem.

Throughout this section, for $\funv \in (\libld{\encl{\dom}})^n$ and
$\nhd\subset \encl{\dom}$ open, set
\begin{equation*}
	\norm{\infty,\nhd}{\funv} := \essup{\nhd} |\funv| \,.	
	\nomenclature[n]{$\norm{\infty,\nhd}{\funv}$}{$\essup{\nhd}{\tova{\funv}}$}
\end{equation*}
It is essential for the subsequent proofs to be able to compare the Lipschitz
constant of a function by the norm of its gradient. The following lemma enables
this comparison on balls.
\begin{lemma}\label{lem:bd_lip_ball}
	Let $\encl{\dom}\subset \Rn$ be open and $\fun \in
	\skpdr{1}{\infty}{\encl{\dom}}{\R}$. Then for every $\eR_0 \in
	\encl{\dom}$ and $0 < \delta < \frac{1}{2}\dist{\eR_0}{\bd{\encl{\dom}}}$ with $\ball{\eR_0}{\delta}\subset
	\encl{\dom}$
	\begin{equation*}
		\sup\limits_{\substack{\eR,\eRR\in\ball{\eR_0}{\delta}\\\eR \neq
		\eRR}} \frac{|\fun(\eR) - \fun(\eRR)|}{|\eR- \eRR|} \leq
	\norm{\infty,\ball{\eR_0}{2\delta}}{\Deriv{\fun}} \,.
	\end{equation*}
\end{lemma}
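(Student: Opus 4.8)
The plan is to establish the bound first for smooth mollifications of $\fun$, where it is an immediate consequence of the fundamental theorem of calculus along segments, and then to pass to the limit. The key geometric observation is that $\ball{\eR_0}{\delta}$ is convex, so for any $\eR,\eRR\in\ball{\eR_0}{\delta}$ the segment $\eRR+t(\eR-\eRR)$, $t\in[0,1]$, stays inside $\ball{\eR_0}{\delta}\subset\ball{\eR_0}{2\delta}$. Throughout I would identify $\fun$ with its continuous (indeed locally Lipschitz) representative, which exists because $\fun\in\skpdr{1}{\infty}{\encl{\dom}}{\R}$; this is what makes the pointwise difference quotients on the left-hand side meaningful and the left-hand supremum well-posed.

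First I would do the distance bookkeeping. Since $\delta<\frac{1}{2}\dist{\eR_0}{\bd{\encl{\dom}}}$, the larger ball satisfies $\ball{\eR_0}{2\delta}\subset\encl{\dom}$, so the right-hand side is well-defined. Moreover, for every $\eR\in\ball{\eR_0}{\delta}$,
\begin{equation*}
	\dist{\eR}{\bd{\encl{\dom}}} \geq \dist{\eR_0}{\bd{\encl{\dom}}} - |\eR-\eR_0| > 2\delta-\delta = \delta \,,
\end{equation*}
so for any mollification radius $0<\eps<\delta$ the ball $\ball{\eR}{\eps}$ lies in $\encl{\dom}$ and, since $\eps<\delta$, in $\ball{\eR_0}{2\delta}$.

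Next, for $0<\eps<\delta$ let $\fun_\eps$ be the convolution of $\fun$ with the rescaled standard mollifier, which is supported in $\ball{0}{\eps}$ (cf. Example \ref{ex:can_norme}). On $\ball{\eR_0}{\delta}$ this is a well-defined smooth function whose derivative $\Deriv{\fun_\eps}$ is the convolution of $\Deriv{\fun}$ with the same mollifier; since for $\eR\in\ball{\eR_0}{\delta}$ this averages only values of $\Deriv{\fun}$ over $\ball{\eR}{\eps}\subset\ball{\eR_0}{2\delta}$ and the mollifier integrates to $1$, I obtain
\begin{equation*}
	\sup\limits_{\eR\in\ball{\eR_0}{\delta}} |\Deriv{\fun_\eps}(\eR)| \leq \norm{\infty,\ball{\eR_0}{2\delta}}{\Deriv{\fun}} =: \const \,.
\end{equation*}
Applying the fundamental theorem of calculus along the segment joining $\eRR$ and $\eR$ then gives $|\fun_\eps(\eR)-\fun_\eps(\eRR)| \leq \const|\eR-\eRR|$ for all $\eR,\eRR\in\ball{\eR_0}{\delta}$.

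Finally I would pass to the limit $\eps\downarrow 0$. For each fixed $\eR,\eRR\in\ball{\eR_0}{\delta}$ one has $\fun_\eps(\eR)\to\fun(\eR)$ and $\fun_\eps(\eRR)\to\fun(\eRR)$ by continuity of the chosen representative, whence
\begin{equation*}
	|\fun(\eR)-\fun(\eRR)| = \lim\limits_{\eps\downarrow 0} |\fun_\eps(\eR)-\fun_\eps(\eRR)| \leq \const|\eR-\eRR| \,,
\end{equation*}
and taking the supremum over $\eR\neq\eRR$ yields the claim. The main obstacle is the bookkeeping guaranteeing that the mollified gradient sees only values of $\Deriv{\fun}$ inside $\ball{\eR_0}{2\delta}$, together with the passage to the limit; the latter hinges on $\fun$ admitting a continuous representative, i.e. on the standard fact that $\skpdr{1}{\infty}{\encl{\dom}}{\R}$ functions are locally Lipschitz. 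That regularity is precisely what underlies the well-posedness of the left-hand supremum and the validity of the limit.
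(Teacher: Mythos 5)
Your proof is correct and follows essentially the same route as the paper's: mollify $\fun$, bound $\Deriv{\fun_\eps}$ on $\ball{\eR_0}{\delta}$ by $\norm{\infty,\ball{\eR_0}{2\delta}}{\Deriv{\fun}}$ using that the mollifier only sees values in $\ball{\eR}{\eps}\subset\ball{\eR_0}{2\delta}$, apply the mean value inequality on the convex ball, and pass to the limit pointwise. Your explicit appeal to the continuous representative of $\fun$ is in fact slightly more careful than the paper's bare citation of pointwise convergence of mollifications, but the argument is the same.
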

\begin{proof}
	Let $\eps < \delta$. For $\eR \in
	\ball{\eR_0}{\delta}$ set
	\begin{equation*}
		\fun_\eps(\eR) := \I{\Rn}{\mol_\eps(\eRR-\eR) \fun(\eRR)}{\eRR} =
		\mol_\eps * \fun (\eR)
		\,,
	\end{equation*}
	where $\mol_\eps$ is a scaled standard mollification kernel. Then as in
	Evans \cite[p. 123]{Evans1992}
	\begin{equation*}
		\Deriv{\fun_\eps} = \mol_\eps * \Deriv{\fun}\,.
	\end{equation*}
	Note that $\fun_\eps \to \fun$ point wise (cf.
	\cite[p. 123]{Evans1992}).
	Hence, for every $\eR,\eRR\in \ball{\eR_0}{\delta}$ with $\eR \neq
	\eRR$
	\begin{align*}
		|\fun(\eR) - \fun(\eRR)| & = \lim \limits_{\eps \downarrow 0}
		|\fun_\eps(\eR) - \fun_\eps(\eRR)| \\
		& \leq \liminf\limits_{\eps \downarrow 0}
		\normi{\Deriv{\fun_\eps}} |\eR- \eRR| \,.
	\end{align*}
	Now for every $\eR \in \ball{\eR_0}{\delta}$
	\begin{equation*}
		|\Deriv{\fun_\eps}(\eR)| \leq \I{\Rn}{|\mol_\eps(\eRR-\eR)|
			|\Deriv{\fun}(\eRR)|}{\lem} \leq
			\norm{\infty,\ball{\eR_0}{\delta + \eps}}{\Deriv{\fun}} \,.
	\end{equation*}
	Thus
	\begin{equation*}
		\sup\limits_{\substack{\eR,\eRR \in
			\ball{\eR_0}{\delta}\\\eR\neq \eRR}} \frac{|\fun(\eR) -
			\fun(\eRR)|}{|\eR-\eRR|} \leq \liminf\limits_{\eps
			\downarrow 0}\norm{\infty,\ball{\eR_0}{\delta +
				\eps}}{\Deriv{\fun}} \leq
				\norm{\infty,\ball{\eR_0}{2\delta}}{\Deriv{\fun}}
	\end{equation*}
\end{proof}
Once the estimate on balls is obtained, it is possible to prove the statement
for path-connected sets.
\begin{lemma}\label{lem:bd_lip_path}
	Let $\encl{\dom}\subset \Rn$ be open and $\ferm \subset \encl{\dom}$ be
	compact and path-connected. Furthermore let $0< \delta <\dist{\ferm}{\bd{\encl{\dom}}}$.

	Then there exists $\const >0$ depending only on $\ferm$ and $\delta$ such
	that for every $\fun \in \skpdr{1}{\infty}{\encl{\dom}}{\R}$
	\begin{equation*}
		\sup \limits_{\substack{\eR,\eRR\in
			\ferm\\\eR\neq \eRR}}
			\frac{|\fun(\eR)-\fun(\eRR)|}{|\eR-\eRR|} \leq \const
			\norm{\infty,\dnhd{\ferm}{\delta}}{\Deriv{\fun}} \,.
	\end{equation*}
\end{lemma}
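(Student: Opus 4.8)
The plan is to promote the per-ball Lipschitz estimate of Lemma \ref{lem:bd_lip_ball} to all of $\ferm$ by a chaining argument whose number of links is bounded uniformly in the two endpoints. First I would fix the radius $\rho := \delta/2$ and record the local estimate. For each $\eR_0 \in \ferm$ one has $\dist{\eR_0}{\bd{\encl{\dom}}} \ge \dist{\ferm}{\bd{\encl{\dom}}} > \delta = 2\rho$, so $\rho < \dist{\eR_0}{\bd{\encl{\dom}}}/2$ and $\ball{\eR_0}{\rho} \subset \ball{\eR_0}{\delta} \subset \dnhd{\ferm}{\delta} \subset \encl{\dom}$; thus Lemma \ref{lem:bd_lip_ball} applies with centre $\eR_0$ and radius $\rho$. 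Since $\ball{\eR_0}{2\rho} = \ball{\eR_0}{\delta} \subset \dnhd{\ferm}{\delta}$, writing $M := \norm{\infty,\dnhd{\ferm}{\delta}}{\Deriv{\fun}}$ it yields
\begin{equation*}
	|\fun(\eR) - \fun(\eRR)| \le M\,|\eR - \eRR| \qquad\text{whenever } \eR,\eRR \in \ball{\eR_0}{\rho},\ \eR_0 \in \ferm .
\end{equation*}

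The crux is to chain such local estimates with a uniformly bounded number of steps. I would cover the compact set $\ferm$ by finitely many balls $\ball{c_1}{\rho/2}, \dots, \ball{c_K}{\rho/2}$ centred at points $c_j \in \ferm$, where $K$ depends only on $\ferm$ and $\delta$. On $\{1,\dots,K\}$ form the graph joining $j$ and $j'$ precisely when $\ferm \cap \ball{c_j}{\rho/2} \cap \ball{c_{j'}}{\rho/2} \neq \emptyset$; any such edge forces $|c_j - c_{j'}| < \rho$ by the triangle inequality. Since $\ferm$ is connected (being path-connected) and the relatively open sets $\ferm \cap \ball{c_j}{\rho/2}$ cover it, this graph is connected, hence has diameter at most $K - 1$.

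Given $\eR,\eRR \in \ferm$ I would then pick $c_{j_0}, c_{j_m}$ with $\eR \in \ball{c_{j_0}}{\rho/2}$, $\eRR \in \ball{c_{j_m}}{\rho/2}$ and a graph path $j_0, j_1, \dots, j_m$ with $m \le K-1$. The chain $\eR, c_{j_0}, c_{j_1}, \dots, c_{j_m}, \eRR$ lies entirely in $\ferm$, consists of at most $N := K+1$ links, and each consecutive pair lies within distance $\rho$ with its first point in $\ferm$; applying the local estimate link by link gives $|\fun(\eR)-\fun(\eRR)| \le M N \rho$. To turn this into a Lipschitz bound I split into two regimes: if $|\eR-\eRR| < \rho$ the local estimate centred at $\eR$ gives $|\fun(\eR)-\fun(\eRR)| \le M|\eR-\eRR|$ directly, while if $|\eR-\eRR| \ge \rho$ then $M N \rho \le M N |\eR-\eRR|$. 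In both cases
\begin{equation*}
	\frac{|\fun(\eR) - \fun(\eRR)|}{|\eR - \eRR|} \le N\,M = \const\,\norm{\infty,\dnhd{\ferm}{\delta}}{\Deriv{\fun}},
\end{equation*}
with $\const := K+1$ depending only on $\ferm$ and $\delta$, which is the assertion.

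The step I expect to require the most care is exactly the uniformity of the link count. Subdividing a single path from $\eR$ to $\eRR$ would tie the number of links to that path's modulus of continuity, and hence to the chosen points, spoiling a uniform constant; routing the chain through the fixed finite cover instead lets connectedness of $\ferm$ supply one bound $K-1$ on the graph diameter valid for every pair. The remaining ingredients — the inclusions of balls, the distance bound along edges, and the link-by-link summation — are routine once Lemma \ref{lem:bd_lip_ball} is in hand.
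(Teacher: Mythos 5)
Your proof is correct, and it shares the overall skeleton of the paper's proof — Lemma \ref{lem:bd_lip_ball} on small balls, a finite cover of the compact set $\ferm$, a chain with a uniformly bounded number of links, and a case split according to whether $|\eR-\eRR|$ is below or above the ball radius — but the mechanism producing the uniform chain is genuinely different. The paper uses path-connectedness directly: it takes a continuous path $\path:[0,1]\to\ferm$ from $\eR$ to $\eRR$ and runs a shortcutting induction (tracking the last exit time from each ball of the cover), which yields at most $m$ intermediate points at mutual distance $\leq 2\delta$; this forces applying the ball lemma at radius $3\delta$, produces the constant $2(m+2)$, and requires the final rescaling $\overline{\delta}=\frac{1}{6}\delta$ so that the gradient norm is ultimately taken over $\dnhd{\ferm}{\delta}$. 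You instead invoke the purely topological fact that the nerve graph of a finite open cover of a connected space is connected: two balls of the cover are adjacent when they meet inside $\ferm$, connectedness bounds the graph diameter by $K-1$, and you chain through the ball centres. This buys three things: you only need connectedness of $\ferm$ (path-connectedness is used merely to infer it), you avoid the somewhat delicate shortcut induction, and your choice of radii ($\rho=\delta/2$, cover by $\rho/2$-balls) keeps every application of Lemma \ref{lem:bd_lip_ball} inside $\dnhd{\ferm}{\delta}$ from the outset, so no rescaling step is needed and the constant $K+1$ comes out cleanly. The paper's construction is more hands-on in that the chain actually follows a curve lying in $\ferm$; yours is shorter and slightly more general. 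Both give a constant depending only on $\ferm$ and $\delta$ through the fixed finite cover, as required.
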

\begin{proof}
	First, let $\delta < \frac{1}{6} \dist{\ferm}{\bd{\encl{\dom}}}$.
	Note that
	\begin{equation*}
		\ferm \subset  \bigcup\limits_{\eR \in \ferm}
		\ball{\eR}{\delta} \,.
	\end{equation*}
	Since $\ferm$ is compact, there exists $m \in \N$ and
	$\{\eR_k\}_{k=\mN}^m\subset \ferm$ such that
	\begin{equation*}
		\ferm \subset \bigcup \limits_{k=\mN}^m \ball{\eR_k}{\delta}
		\,.
	\end{equation*}
	Now, let $\eR,\eRR \in \ferm$ with $\eR\neq \eRR$ be such that $|\eR-\eRR| <\delta$. 
	Then
	\begin{equation*}
		\eRR \in \ball{\eR}{\delta} \subset \ball{\eR}{2\delta}\subset
		\dnhd{\ferm}{2\delta} \subset \encl{\dom} \,.
	\end{equation*}
	By Lemma \ref{lem:bd_lip_ball}
	\begin{equation*}
		\frac{|\fun(\eR) - \fun(\eRR)|}{|\eR-\eRR|} \leq
		\norm{\infty,\ball{\eR}{2\delta}}{\Deriv{\fun}} \leq
		\norm{\infty,\dnhd{\ferm}{2\delta}}{\Deriv{\fun}} \,.
	\end{equation*}
	Now, assume $|\eR-\eRR|\geq \delta$. Then there exists a continuous $\path : [0,1]
	\to \ferm$ such that $\path(0) = \eR$ and $\path(1) = \eRR$. Let $\mN\leq
	k \leq m$ such that
	\begin{equation*}
		\eR \in \ball{\eR_k}{\delta}	
	\end{equation*}
	and set
	\begin{equation*}
		t_\mN := \sup\{t\in [0,1] \setpipe \path(t) \in \ball{\eR_k}{\delta}\}
		\,.
	\end{equation*}
	If $\path(t_\mN) \in \ball{\eR_k}{\delta}$, then $t_\mN =1$ and
	$\eRR\in \ball{\eR_k}{\delta}$. Hence
	\begin{equation*}
		\frac{|\fun(\eR)- \fun(\eR)|}{|\eR-\eRR|} \leq
		\norm{\infty,\ball{\eR_k}{2\delta}}{\Deriv{\fun}} \leq
		\norm{\infty,\dnhd{\ferm}{2\delta}}{\Deriv{\fun}} \,.
	\end{equation*}
	Otherwise, $\path(t_\mN) \notin\ball{\eR_k}{\delta}$. But then there
	exists $\mN\leq l \leq m$ and $l \neq k$ such that
	\begin{equation*}
		\path(t_\mN) \in \ball{\eR_l}{\delta}
	\end{equation*}
	and 
	\begin{equation*}
		\path(t) \notin \ball{\eR_k}{\delta} \text{ for all } t \geq
		t_\mN \,.
	\end{equation*}
	Set 
	\begin{equation*}
		\path^\mN(t) = \begin{cases}
			\eR + \frac{t}{t_0}(\path(t_\mN) - \eR) &\mbox{ for } t \leq t_\mN
			\\
			\path(t) &\mbox{ otherwise.}
		\end{cases}	
	\end{equation*}
	In essence, $\path^{\mN}$ is a shortcut in $\cl{\ball{\eR_k}{\delta}}$ to the last point where $\path$
	is in $\cl{\ball{\eR_k}{\delta}}$.

	Repeating the steps above, induction yields a continuous path
	$\overline{\path} :
	[0,1] \to \cl{\dnhd{\ferm}{\delta}}$, $\mN \leq m' \leq m$ and $\{t_l\}_{l=1}^{m'}\subset
	[0,1]$ such that
	\begin{equation*}
		|\overline{\path}(t_l) - \overline{\path}(t_{l+1})| \leq 2\delta \text{ for } l=\mN,...,m'-1
	\end{equation*}
	and 
	\begin{equation*}
		|\eR-\overline{\path}(t_\mN)|\leq 2\delta \text{ and }
		|\eRR-\overline{\path}(t_{m'})|\leq 2\delta
		\,.
	\end{equation*}
	Using Lemma \ref{lem:bd_lip_ball} again for balls of radius $3\delta$
	\begin{align*}
		|\fun(\eR) - \fun(\eRR)| \leq & |\fun(\eR) -
		\fun(\overline{\path}(t_\mN)|
		+ \sum \limits_{l=\mN}^{m'-1} | \fun(\overline{\path}(t_l)) -
		\fun(\overline{\path}(t_{l+1}))| + |\fun(\overline{\path}_{m'}) - \fun(\eRR)|\\
		\leq & \norm{\infty,\ball{\eR}{6\delta}}{\Deriv{\fun}}
		|\eR-\overline{\path}(t_\mN)| + \sum \limits_{l=\mN}^{m'-1}
		\norm{\infty,\ball{\overline{\path}(t_l)}{6\delta}}{\Deriv{\fun}}
		|\overline{\path}(t_l)-\overline{\path}(t_{l+1})| \\
		& + \norm{\infty,\ball{\eRR}{6\delta}}{\Deriv{\fun}}
		|\eRR-\overline{\path}(t_{m'})| \\
		\leq & \norm{\infty,\dnhd{\ferm}{6\delta}}{\Deriv{\fun}}
		2\delta (m'+2) \\
		\leq &
		2(m+2)\norm{\infty,\dnhd{\ferm}{6\delta}}{\Deriv{\fun}}|\eR-\eRR|
		\,.
	\end{align*}
	Since $\eR,\eRR\in \ferm$ were arbitrary
	\begin{equation*}
		\sup \limits_{\substack{\eR,\eRR\in \ferm\\\eR\neq \eR}}
		\frac{|\fun(\eR) - \fun(\eRR)|}{|\eR-\eRR|} \leq 2(m+2)
		\norm{\infty,\dnhd{\ferm}{6\delta}}{\Deriv{\fun}} \,.
	\end{equation*}
	Note that $m$ only depends on $\ferm$ and $\delta$.
	Finally, for $0< \delta < \dist{\ferm}{\bd{\encl{\dom}}}$ set
	\begin{equation*}
		\overline{\delta} := \frac{1}{6}{\delta} \,.
	\end{equation*}
	Then the inequality above yields
	\begin{equation*}
		\sup \limits_{\substack{\eR,\eRR\in \ferm\\\eR\neq \eRR}}
		\frac{|\fun(\eR)-\fun(\eRR)|}{|\eR-\eRR|} \leq 2(m+2)
		\norm{\infty,\dnhd{\ferm}{\delta}}{\Deriv{\fun}} \,.
	\end{equation*}
	This finishes the proof.
\end{proof}
\begin{remark}
	The requirement that $\ferm$ is path-connected cannot be dropped. In
	order to see this, let $\encl{\dom} := \R^2$ and 
	\begin{equation*}
		\ferm := [-1,1]\times \{-2,2\} \,.
	\end{equation*}
	Let $\fun\in \skpdr{1}{\infty}{\R^2}{\R}$ be a \tLcont{} function such that
	\begin{equation*}
		\fun := \begin{cases}
		1 &\mbox{ on } [-2,2]\times [1,3] \\
		0 &\mbox{ on } [-2,2]\times [-3,-1] \,.
		\end{cases}
	\end{equation*}
	Then for $0 < \delta < 1$
	\begin{equation*}
		\norm{\infty,\dnhd{\ferm}{\delta}}{\Deriv{\fun}} = 0 
	\end{equation*}
	but
	\begin{equation*}
		\sup \limits_{\substack{\eR,\eRR\in \ferm\\\eR\neq \eRR}}
		\frac{|\fun(\eR) - \fun(\eRR)|}{|\eR-\eRR|} > 0 \,.
	\end{equation*}
\end{remark}
Since the trace operator of Silhavy \cite{silhavy_divergence_2009} is defined
on the space of \tLcont{} functions, this space needs to be introduced now.
\begin{definition}
	Let $\dom \subset \Rn$. Let 
	\begin{equation*}
		\Lcspace{\dom}
		\nomenclature[f]{$\Lcspace{\dom}$}{\tLcont{} functions on $\dom$}
	\end{equation*}
	denote the set of all \tLcont{} functions on $\dom$. For $\fun \in
	\Lcspace{\dom}$ set
	\begin{equation*}
		\Lcnorm{\fun} := \norm{C}{\fun} + \sup
		\limits_{\substack{\eR,\eRR\in \dom,\\\eR\neq \eRR}}
		\frac{|\fun(\eR) - \fun(\eRR)|}{|\eR - \eRR|} \,.
		\nomenclature[n]{$\Lcnorm{\fun}$}{$\norm{C}{\fun} + \sup
		\limits_{\substack{\eR,\eRR\in \dom,\\\eR\neq \eRR}}
		\frac{\tova{\fun(\eR) - \fun(\eRR)}}{\tova{\eR - \eRR}}$}
	\end{equation*}
\end{definition}
The following result is a slight variation of Lemma 3.2 in
Silhavy \cite[p. 451]{silhavy_divergence_2009}. It states that the Gauß formula
yields zero, if the scalar field is zero on the boundary.
\begin{proposition}\label{prop:lip_bd_zero}
	Let $\dom \subset \Rn$ be open and bounded, $\funv \in \dmpd{1}{\dom}$ and $\fun \in \Lcspace{\cl{\dom}}$ be
	such that
	\begin{equation*}
		\refun{\fun}{\bd{\dom}} = 0 \,.
	\end{equation*}
	Then
	\begin{equation*}
		\I{\dom}{\funv \cdot \Deriv{\fun}}{\lem} +
		\I{\dom}{\fun}{\divv{\funv}} = 0 \,.
	\end{equation*}
\end{proposition}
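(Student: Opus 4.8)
The plan is to reduce the identity to the integration-by-parts formula for test functions with \emph{compact} support in $\dom$, where no boundary term appears, and to absorb the error made by cutting $\fun$ off near $\bd{\dom}$ using the fact that $\fun$ vanishes there.

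Before cutting off, I would record the decisive pointwise estimate. Since $\dom$ is bounded, $\bd{\dom}$ is compact, so every $\eR \in \dom$ has a nearest boundary point $\eRR \in \bd{\dom}$ with $|\eR - \eRR| = \dist{\eR}{\bd{\dom}}$. Writing $L := \Lcnorm{\fun}$ for (an upper bound of) the Lipschitz constant and using $\fun(\eRR) = 0$ yields
\begin{equation*}
	|\fun(\eR)| = |\fun(\eR) - \fun(\eRR)| \leq L\,\dist{\eR}{\bd{\dom}} \qquad \text{for every } \eR \in \dom \,,
\end{equation*}
so that $\fun$ decays at least linearly towards the boundary; in particular $\normi{\fun} \leq L$.

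Next I would reuse the Lipschitz cutoffs from the proof of Theorem \ref{thm:ggt_me_all}: for $k \in \N$ set
\begin{equation*}
	\eta_k := \min\left\{1, \max\left\{0, k\,\distf{\bd{\dom}} - 1\right\}\right\} \,,
\end{equation*}
so that $\eta_k = 0$ on $\dom \setminus \idnhd{\dom}{\frac{1}{k}}$, $\eta_k = 1$ on $\idnhd{\dom}{\frac{2}{k}}$ and $|\Deriv{\eta_k}| = k\,\ind{\idnhd{\dom}{\frac{1}{k}}\setminus\idnhd{\dom}{\frac{2}{k}}}$ almost everywhere. Then $\fun\eta_k$ is Lipschitz with compact support in $\dom$. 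Applying the integration-by-parts formula for compactly supported test functions (cf. \cite[p. 252]{chen_divergence-measure_2005}, extended from $C_c^\infty$ to compactly supported Lipschitz functions by mollification, using $\funv\in L^1$ and that $\divv{\funv}$ is a finite measure) together with the Leibniz rule $\Deriv{(\fun\eta_k)} = \eta_k\Deriv{\fun} + \fun\Deriv{\eta_k}$ gives, for every $k$,
\begin{equation*}
	\I{\dom}{\eta_k\,\funv\cdot\Deriv{\fun}}{\lem} + \I{\dom}{\fun\,\funv\cdot\Deriv{\eta_k}}{\lem} + \I{\dom}{\fun\eta_k}{\divv{\funv}} = 0 \,.
\end{equation*}

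Finally I would pass to the limit $k \to \infty$. As $\funv \in L^1$ and $\Deriv{\fun} \in L^\infty$, the product $\funv\cdot\Deriv{\fun}$ lies in $L^1$; since $\eta_k \to 1$ pointwise on $\dom$ and $0 \leq \eta_k \leq 1$, dominated convergence sends the first term to $\I{\dom}{\funv\cdot\Deriv{\fun}}{\lem}$. For the third term, $\fun\eta_k \to \fun$ pointwise with $|\fun\eta_k| \leq \normi{\fun}$, and since $\divv{\funv}$ is a finite measure, dominated convergence (for measures) sends it to $\I{\dom}{\fun}{\divv{\funv}}$. The remaining, and only delicate, term is the middle one, which is the main obstacle. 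On the support $\idnhd{\dom}{\frac{1}{k}}\setminus\idnhd{\dom}{\frac{2}{k}}$ of $\Deriv{\eta_k}$ one has $\dist{\cdot}{\bd{\dom}} \leq \frac{2}{k}$, hence $|\fun| \leq \frac{2L}{k}$ by the decay estimate, while $|\Deriv{\eta_k}| = k$ there. Therefore
\begin{equation*}
	|\fun\,\funv\cdot\Deriv{\eta_k}| \leq \frac{2L}{k}\,|\funv|\,k\,\ind{\idnhd{\dom}{\frac{1}{k}}\setminus\idnhd{\dom}{\frac{2}{k}}} = 2L\,|\funv|\,\ind{\idnhd{\dom}{\frac{1}{k}}\setminus\idnhd{\dom}{\frac{2}{k}}} \,,
\end{equation*}
which is dominated by $2L\,|\funv| \in L^1$ and tends to $0$ pointwise almost everywhere on $\dom$, since every $\eR \in \dom$ eventually leaves the shrinking annulus. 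Dominated convergence thus kills the middle term, and the three limits combine to the asserted identity. The crux is precisely this cancellation of the cutoff error: the factor $k$ coming from $\Deriv{\eta_k}$ is exactly compensated by the linear decay of $\fun$ of order $\frac{1}{k}$ forced by $\refun{\fun}{\bd{\dom}} = 0$, leaving a uniformly $L^1$-dominated integrand supported on a set shrinking to a null set.
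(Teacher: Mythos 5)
Your proposal is correct and follows essentially the same route as the paper: the same cutoffs $\min\left\{1,\max\left\{0,k\distf{\bd{\dom}}-1\right\}\right\}$, the same reduction to the compactly supported case via the product rule for $\dmpd{1}{\dom}$ fields, and the same dominated-convergence passage powered by the linear decay $|\fun| \leq \Lcnorm{\fun}\,\dist{\cdot}{\bd{\dom}}$ forced by the vanishing trace. The only organizational difference is that you expand by Leibniz and kill the error term $\fun\,\funv\cdot\Deriv{\eta_k}$ directly, whereas the paper keeps $\Deriv{(\fun\cdot\nap_k)}$ intact and instead proves the uniform bound $\Lcnorm{\fun\cdot\nap_k}\leq 3\Lcnorm{\fun}$ --- which is the same estimate in disguise.
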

\begin{proof}
	First, suppose that 
	\begin{equation*}
		\supp{\fun} \csubset \dom\,.	
	\end{equation*}
	By \cite[p. 252]{chen_divergence-measure_2005} and Silhavy
	 \cite[p. 448]{silhavy_divergence_2009} (cf. \cite[p.
	 250]{chen_divergence-measure_2005})
	\begin{equation*}
		\I{\dom}{1}{\divv{(\funv\cdot\fun)}}
		=\I{\dom}{\fun}{\divv{\funv}} + \I{\dom}{\funv\cdot
		\Deriv{\fun}}{\lem} = 0 \,.
	\end{equation*}
	For the general case, let 
	\begin{align*}
		\nap_k & := \ind{\idnhd{\dom}{\frac{2}{k}}} + \left(k \distf{\bd{\dom}}-1\right) \ind{\idnhd{\dom}{\frac{1}{k}}\setminus
			\idnhd{\dom}{\frac{2}{k}}}\\
			& = \min\left\{1,\max\left\{0,k\distf{\bd{\dom}} -
	1\right\}\right\} \in \Lcspace{\cl{\dom}} \,.
	\end{align*}
	Then $\fun \cdot \nap_k \in \Lcspace{\cl{\dom}}$ (cf.
	\cite[p. 48]{clarke_optimization_1984}). In order to estimate the norm
	independently of $k\in \N$,
	let $\eR,\eRR \in \dom$. If $\eR,\eRR \in \idnhd{\dom}{\frac{2}{k}}$
	then
	\begin{equation*}
		|\fun(\eR)\nap_k(\eR) - \fun(\eRR)\nap_k(\eRR)| = |\fun(\eR) -
		\fun(\eRR)| \leq \Lcnorm{\fun}|\eR-\eRR| \,.
	\end{equation*}
	Otherwise, w.l.o.g. $\eR \in \dom\setminus\idnhd{\dom}{\frac{2}{k}}$
	and
	\begin{align*}
		|\fun(\eR)\cdot \nap_k(\eR) - \fun(\eRR)\nap_k(\eRR)| & \leq
		|\fun(\eR)| |\nap_k(\eR) -\nap_k(\eRR)| +
		|\nap_k(\eRR)||\fun(\eR) - \fun(\eRR)| \\
		& \leq \norm{C\left(\dom\setminus\idnhd{\dom}{\frac{2}{k}}\right)}{\fun} |\nap_k(\eR) -\nap_k(\eRR)| + |\fun(\eR)
		- \fun(\eRR)|\\
		& \leq
		\left(\sup\limits_{0\leq \distf{\bd{\dom}}(\eR)
		\leq \frac{2}{k}} |\fun(\eR)| k + \Lcnorm{\fun}\right) |\eR - \eRR|
		\,.
	\end{align*}
	Since $\fun$ vanishes on $\bd{\dom}$
	\begin{equation*}
		\sup\limits_{0\leq \distf{\bd{\dom}}(\eR)\leq
		\frac{2}{k}} |f(x)- 0| \leq \Lcnorm{\fun} \frac{2}{k} \,,
	\end{equation*}
	whence
	\begin{equation*}
		\Lcnorm{\fun \cdot \nap_k} \leq 3\Lcnorm{\fun} \,.
	\end{equation*}
	Furthermore, for every $k \in \N$ 
	\begin{equation*}
		\supp{\fun\cdot \nap_k} \csubset \dom \,.
	\end{equation*}
	Hence, for every $k \in \N$
	\begin{equation*}
		\I{\dom}{\funv \cdot \Deriv{(\fun \cdot \nap_k)}}{\lem} +
		\I{\dom}{\fun \cdot \nap_k}{\divv{\funv}} = 0 \,.
	\end{equation*}
	First note that
	\begin{equation*}
		\I{\dom}{\fun\cdot \nap_k}{\divv{\fun}} \xrightarrow{k \to
		\infty} \I{\dom}{\fun}{\divv{\funv}}
	\end{equation*}
	by the Dominated Convergence Theorem (cf. \cite[p. 20]{Evans1992}).
	
	On the other hand, since $\normi{\Deriv{(\fun\cdot \nap_k)}}\leq
	\Lcnorm{\fun\cdot \nap_k}$ is bounded
	independently of $k \in \N$ the Dominated Convergence Theorem also
	yields
	\begin{equation*}
		\I{\dom}{\funv\cdot \Deriv{(\fun\cdot \nap_k)}}{\lem}
		\xrightarrow{k \to \infty} \I{\dom}{\funv \cdot
		\Deriv{\fun}}{\lem}	\,.
	\end{equation*}
	Hence
	\begin{equation*}
		\I{\dom}{\funv\cdot \Deriv{\fun}}{\lem} +
		\I{\dom}{\fun}{\divv{\funv}} \xleftarrow{k \to\infty}
		\I{\dom}{\funv\cdot \Deriv{(\fun \cdot \nap_k)}}{\lem} +
		\I{\dom}{\fun \cdot \nap_k}{\divv{\funv}} = 0 \,.
	\end{equation*}
\end{proof}
The following proposition is a specialised version of Theorem 2.3 in Silhavy
\cite[p. 448]{silhavy_divergence_2009}. It states that the volume part of a
Gauß formula only depends on the boundary values of the \tLcont{} scalar
function.
\begin{proposition}\label{prop:ntfs_sil}
	Let $\dom \subset \Rn$ be open and bounded and $\funv \in
	\dmpd{1}{\dom}$. Then there exists a continuous linear functional
	\begin{equation*}
		\ntfs{\funv}{\dom}:\Lcspace{\bd{\dom}} \to \R
	\end{equation*}
	such that for every $\fun \in \Lcspace{\cl{\dom}}$
	\begin{equation*}
		\ntfs{\funv}{\dom}(\refun{\fun}{\bd{\dom}}) =
		\I{\dom}{\fun}{\divv{\funv}} +
		\I{\dom}{\funv\cdot\Deriv{\fun}}{\lem} \,.
	\end{equation*}
	Furthermore
	\begin{equation*}
		\norm{}{\ntfs{\funv}{\dom}} \leq \normdmp{1}{\funv} \,.	
	\end{equation*}
\end{proposition}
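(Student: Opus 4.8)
The plan is to first write down the obvious candidate on the larger space $\Lcspace{\cl{\dom}}$ and then push it down to the boundary via the restriction map. For $\fun \in \Lcspace{\cl{\dom}}$ set
\[
	T(\fun) := \I{\dom}{\fun}{\divv{\funv}} + \I{\dom}{\funv\cdot\Deriv{\fun}}{\lem}\,.
\]
This is well-defined and linear: a Lipschitz $\fun$ lies in $W^{1,\infty}(\dom)$, so its (weak) gradient agrees almost everywhere with the classical one and $\normi{\Deriv{\fun}}$ is bounded by the Lipschitz constant, whence $\funv\cdot\Deriv{\fun}$ is $\lem$-integrable because $\funv$ is; moreover $\fun$ is a bounded continuous function paired with the finite measure $\divv{\funv}$. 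The first substantive step is to observe that $T$ factors through the restriction map $R:\fun\mapsto\refun{\fun}{\bd{\dom}}$. This is exactly the content of Proposition \ref{prop:lip_bd_zero}: $T(\fun)=0$ whenever $\refun{\fun}{\bd{\dom}}=0$, so by linearity of $T$ one has $T(\fun_1)=T(\fun_2)$ as soon as $\fun_1$ and $\fun_2$ coincide on $\bd{\dom}$.

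Next I would use surjectivity of $R$. Every $g\in\Lcspace{\bd{\dom}}$ admits a Lipschitz extension to $\cl{\dom}$ (in fact to all of $\Rn$) with unchanged Lipschitz constant $L$, for instance the McShane extension $\tilde g(\eR):=\inf_{\eRR\in\bd{\dom}}\big(g(\eRR)+L\,|\eR-\eRR|\big)$. Hence $R$ is onto and I may define
\[
	\ntfs{\funv}{\dom}(g):=T(\fun)\quad\text{for any }\fun\in\Lcspace{\cl{\dom}}\text{ with }R\fun=g\,.
\]
Independence of the chosen extension is precisely the factorisation established above, and linearity of $\ntfs{\funv}{\dom}$ follows since a sum (resp. scalar multiple) of extensions of $g_1,g_2$ is an extension of $g_1+g_2$ (resp. of $\lambda g_1$), combined with the linearity of $T$.

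It remains to establish the norm estimate, and this is the step requiring care, because the bound must be expressed through the boundary datum $g$ alone and with constant $1$. The crucial leverage is again that $T$ is insensitive to the choice of extension, so I am free to pick a \emph{norm-optimal} one: take the McShane extension of $g$ and truncate it at the levels $\pm\norm{C}{g}$. Truncation is composition with a $1$-Lipschitz map, so it preserves the Lipschitz constant $L$, and it does not change the values on $\bd{\dom}$ because $|g|\leq\norm{C}{g}$ there; the resulting extension $\fun$ therefore satisfies $\refun{\fun}{\bd{\dom}}=g$, $\norm{C}{\fun}=\norm{C}{g}$ and $\normi{\Deriv{\fun}}\leq L$. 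Estimating crudely,
\[
	\left|\ntfs{\funv}{\dom}(g)\right|=\left|T(\fun)\right|\leq\norm{C}{\fun}\,\tova{\divv{\funv}}(\dom)+\normi{\Deriv{\fun}}\,\norm{1}{\funv}\leq\Lcnorm{g}\,\big(\tova{\divv{\funv}}(\dom)+\norm{1}{\funv}\big)\,,
\]
where I used $\norm{C}{g}\leq\Lcnorm{g}$ and $L\leq\Lcnorm{g}$. This shows $\ntfs{\funv}{\dom}$ is continuous with operator norm at most $\tova{\divv{\funv}}(\dom)+\norm{1}{\funv}$, which is $\normdmp{1}{\funv}$ by the definition of the norm on $\dmpd{1}{\dom}$. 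The only genuine obstacle is engineering the extension so that both the supremum norm and the Lipschitz seminorm are controlled simultaneously by $\Lcnorm{g}$ with no loss of constant; the truncation trick above resolves it, exploiting that $T$ never sees the interior values of the extension.
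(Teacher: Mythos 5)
Your proposal is correct and follows essentially the same route as the paper: define the functional on boundary data via Lipschitz extensions, use Proposition \ref{prop:lip_bd_zero} for independence of the extension, and obtain the norm bound by choosing an extension whose $\Lcspace{}$-norm matches that of the boundary datum. The only difference is cosmetic — where the paper cites Silhavy and Federer for the existence of a norm-preserving extension, you construct it explicitly via the McShane formula followed by truncation at $\pm\norm{C}{g}$, which is a perfectly valid (and self-contained) substitute.
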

\begin{proof}
	The proof follows the same lines as the one in \cite[p. 452]{silhavy_divergence_2009}.
	Let $\fun \in \Lcspace{\bd{\dom}}$ and $\fun_1,\fun_2 \in
	\Lcspace{\Rn}$ be extensions of $\fun$ to all of $\Rn$ (cf.
	\cite[p. 201]{federer_geometric_1996}). 
	Note that $\refun{(\fun_1-\fun_2)}{\bd{\dom}}= 0$.
	Then by Proposition \ref{prop:lip_bd_zero} 
	\begin{equation*}
		\I{\dom}{\fun_1}{\divv{\funv}} + \I{\dom}{\funv \cdot
			\Deriv{\fun_1}}{\lem} = \I{\dom}{\fun_2}{\divv{\funv}}
			+ \I{\dom}{\funv \cdot \Deriv{\fun_2}}{\lem} \,.
	\end{equation*}
	For $\fun\in \Lcspace{\bd{\dom}}$ and any extension $\funex{\fun} \in
	\Lcspace{\Rn}$ of $\fun$ define
	\begin{equation*}
		\ntfs{\funv}{\dom} (\fun)
		:=\I{\dom}{\funex{\fun}}{\divv{\funv}} +
		\I{\dom}{\funv\cdot\Deriv{\funex{\fun}}}{\lem} \,.
	\end{equation*}
	Then $\ntfs{\funv}{\dom} : \Lcspace{\bd{\dom}} \to \R$ is well-defined
	and a linear functional. For $\fun \in \Lcspace{\bd{\dom}}$ there exists an extension
	$\funex{\fun}\in \Lcspace{\Rn}$ such that
	\begin{equation*}
		\Lcnorm{\funex{\fun}} = \Lcnorm{\fun} \,.
	\end{equation*}
	See Silhavy \cite[p. 452]{silhavy_divergence_2009} and Federer \cite[p. 201]{federer_geometric_1996} for reference.
	With this extension
	\begin{align*}
		|\ntfs{\funv}{\dom}(\fun)| & \leq \tova{\divv{\funv}}(\dom)
		\norm{C}{\funex{\fun}} + \norm{1}{\funv}
		\normi{\Deriv{\funex{\fun}}} \\
		& \leq \normdmp{1}{\funv}
		\Lcnorm{\funex{\fun}} \\
		& = \normdmp{1}{\funv} \Lcnorm{\fun} \,.
	\end{align*}
\end{proof}
Up to now, the arguments from Silhavy \cite{silhavy_divergence_2009} were
retraced. Now, the representation of $\ntfs{\funv}{\dom}$ by the sum of a \tRaM{}
and a \tme{} $\ntfunvd{\funv}\in \bawln{\encl{\dom}}$ is proved. This
result is new because it gives the abstract functionals found in the literature
a concrete representation as integral functionals.
\begin{theorem}{Gauß Theorem\\}\label{thm:ggt_ubound}
Let $\encl{\dom}\subset \Rn$ be open, $\dom \subset \encl{\dom}$ be open with
$\cl{\dom}\subset \encl{\dom}$ compact and $\bd{\dom}$ path-connected. Furthermore, let $\funv \in
\dmpd{1}{\encl{\dom}}$.

Then there exists a \tRaM{}
$\ntfunv{\funv}$\nomenclature[f]{$\ntfunv{\funv}$}{\tRaM{} from Gauß Theorem} on $\bd{\dom}$ and
$\ntfunvd{\funv} \in
\bawln{\encl{\dom}}$\nomenclature[f]{$\ntfunvd{\funv}$}{measure from Gauß
Theorem} with 
\begin{equation*}
	\cor{\ntfunvd{\funv}}\subset \bd{\dom}	
\end{equation*}
such that for all $\fun \in \skpdr{1}{\infty}{\encl{\dom}}{\R}$ the following
\textit{Gauß-Green formula}\index{Gauß formula} holds
\begin{equation*}
	\I{\bd{\dom}}{\fun}{\ntfunv{\funv}} +
	\sI{\bd{\dom}}{\Deriv{\fun}}{\ntfunvd{\funv}} =
	\I{\dom}{\fun}{\divv{\funv}} + \I{\dom}{\funv\cdot \Deriv{\fun}}{\lem}
	\,.
\end{equation*}
\end{theorem}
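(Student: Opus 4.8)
The plan is to realise the functional $\ntfs{\funv}{\dom}$ of Proposition \ref{prop:ntfs_sil} as a sum of two integral functionals and to read off the two measures from the dual of a product space. First I would apply Proposition \ref{prop:ntfs_sil} to $\refun{\funv}{\dom}\in\dmpd{1}{\dom}$, obtaining the continuous functional $\ntfs{\funv}{\dom}$ on $\Lcspace{\bd{\dom}}$ with $\ntfs{\funv}{\dom}(\refun{\fun}{\bd{\dom}})=\I{\dom}{\fun}{\divv{\funv}}+\I{\dom}{\funv\cdot\Deriv{\fun}}{\lem}$ and $\norm{}{\ntfs{\funv}{\dom}}\le\normdmp{1}{\funv}$. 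Since $\cl{\dom}$ is compact, $\bd{\dom}$ is a compact, closed, path-connected set, and $\dist{\bd{\dom}}{\bd{\encl{\dom}}}>0$, so for $0<\delta<\dist{\bd{\dom}}{\bd{\encl{\dom}}}$ Lemma \ref{lem:bd_lip_path} (with $\ferm=\bd{\dom}$) bounds the Lipschitz constant of $\fun$ on $\bd{\dom}$ by $\const\,\norm{\infty,\dnhd{\bd{\dom}}{\delta}}{\Deriv{\fun}}$. Combining this with $\Lcnorm{\refun{\fun}{\bd{\dom}}}=\norm{C}{\refun{\fun}{\bd{\dom}}}+(\text{Lipschitz constant on }\bd{\dom})$ yields
\[
	\left| \ntfs{\funv}{\dom}(\refun{\fun}{\bd{\dom}}) \right|
	\le \normdmp{1}{\funv}\left( \norm{C}{\refun{\fun}{\bd{\dom}}}
	+ \const\, \norm{\infty,\dnhd{\bd{\dom}}{\delta}}{\Deriv{\fun}} \right),
\]
which controls $\ntfs{\funv}{\dom}$ purely by the boundary values of $\fun$ and by $\Deriv{\fun}$ near $\bd{\dom}$. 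The path-connectedness hypothesis enters exactly at this step.

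Set $V:=\dnhd{\bd{\dom}}{\delta}\cap\encl{\dom}$ and $X:=\Cefun{\bd{\dom}}\times\liblnd{V}$ with norm $\norm{}{(g,\vec{h})}:=\max\{\norm{C}{g},\normi{\vec{h}}\}$. On the subspace $Y:=\{(\refun{\fun}{\bd{\dom}},\refun{\Deriv{\fun}}{V}):\fun\in\skpdr{1}{\infty}{\encl{\dom}}{\R}\}$ define $\tilde{S}(\refun{\fun}{\bd{\dom}},\refun{\Deriv{\fun}}{V}):=\ntfs{\funv}{\dom}(\refun{\fun}{\bd{\dom}})$; this is well defined since the right-hand side depends only on the first coordinate, and the estimate above shows it is bounded on $Y$. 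The Hahn-Banach Theorem (cf. \cite[p. 63]{dunford_linear_1988}) extends $\tilde{S}$ to $S\in\dual{X}$ with $\norm{}{S}=\norm{}{\tilde{S}}$, hence minimal in the norm. Because $\bd{\dom}$ is compact, the Riesz Representation Theorem identifies the dual of the first factor with the signed Radon measures on $\bd{\dom}$, producing $\ntfunv{\funv}$, while Proposition \ref{prop:dual_libln} identifies the dual of the second factor with $\bawln{V}$, producing $\ntfunvd{\funv}$ (which I regard as an element of $\bawln{\encl{\dom}}$ by $\bals\mapsto\ntfunvd{\funv}(\bals\cap V)$, so that $\cor{\ntfunvd{\funv}}\subset\cl{V}$). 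Restricting $S$ to $Y$ and invoking Proposition \ref{prop:ntfs_sil} gives the Gauß-Green formula
\[
	\I{\bd{\dom}}{\fun}{\ntfunv{\funv}} + \I{V}{\Deriv{\fun}}{\ntfunvd{\funv}}
	= \I{\dom}{\fun}{\divv{\funv}} + \I{\dom}{\funv\cdot\Deriv{\fun}}{\lem},
\]
valid for all $\fun\in\skpdr{1}{\infty}{\encl{\dom}}{\R}$.

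The remaining and genuinely delicate task is to sharpen $\cor{\ntfunvd{\funv}}\subset\cl{V}$ to $\cor{\ntfunvd{\funv}}\subset\bd{\dom}$, which I expect to be the main obstacle. The starting point is that if $g$ vanishes on a neighbourhood of $\bd{\dom}$, then $\refun{g}{\bd{\dom}}=0$ and the formula forces $\I{V}{\Deriv{g}}{\ntfunvd{\funv}}=0$; thus $\ntfunvd{\funv}$ annihilates the gradients of all functions that are locally constant near $\bd{\dom}$. Choosing a Lipschitz cut-off $\chi$ equal to $1$ on $\dnhd{\bd{\dom}}{\delta/2}$ and supported in $\dnhd{\bd{\dom}}{\delta}$ and applying this to $g:=\fun(1-\chi)$ shows $\I{V}{\Deriv{\fun}}{\ntfunvd{\funv}}=\I{V}{\Deriv{(\fun\chi)}}{\ntfunvd{\funv}}$, an expression that only sees $\fun$ on $\cl{\dnhd{\bd{\dom}}{\delta}}$. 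I would then mimic the restriction-and-minimality argument of Theorem \ref{thm:ggt_me}, using these cut-offs in place of indicators of \tbvs{} (which are unavailable for scalar Lipschitz test functions). The difficulty absent from Theorem \ref{thm:ggt_me} is that $\Deriv{(\fun\chi)}=\chi\Deriv{\fun}+\fun\Deriv{\chi}$ produces a remainder involving $\fun\,\Deriv{\chi}$, which depends on the values of $\fun$ on the annulus $\dnhd{\bd{\dom}}{\delta}\setminus\dnhd{\bd{\dom}}{\delta/2}$ rather than on $\bd{\dom}$ itself. To collapse this annular contribution onto $\bd{\dom}$ I would run the construction along a sequence $\delta_j\downarrow 0$ chosen by the Coarea formula (as elsewhere in the paper), noting that the truncated pure measures $\chi\,\reme{\ntfunvd{\funv}}{\cl{\dnhd{\bd{\dom}}{\delta_j}}}$ are uniformly bounded by $\norm{}{\ntfunvd{\funv}}$ so a weak-$*$ limit exists, and absorb the vanishing annular remainder into the Radon measure $\ntfunv{\funv}$ on $\bd{\dom}$. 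Once $\cor{\ntfunvd{\funv}}\subset\bd{\dom}$ is secured, $\I{V}{\Deriv{\fun}}{\ntfunvd{\funv}}$ equals $\sI{\bd{\dom}}{\Deriv{\fun}}{\ntfunvd{\funv}}$ by definition, and the displayed identity is precisely the claimed Gauß-Green formula.
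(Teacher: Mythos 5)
Your construction up to the first representation coincides with the paper's own proof: Proposition \ref{prop:ntfs_sil}, the Lipschitz bound of Lemma \ref{lem:bd_lip_path} with $\ferm = \bd{\dom}$, the embedding $\fun \mapsto (\refun{\fun}{\bd{\dom}}, \refun{\Deriv{\fun}}{\dnhd{(\bd{\dom})}{\delta}})$, Hahn--Banach, and the identification of the dual of the product space yield a \tRaM{} $\ntfunv{\funv}$ on $\bd{\dom}$ and a \tme{} $\me \in \bawln{\dnhd{(\bd{\dom})}{\delta}}$ with
\begin{equation*}
	\ntfs{\funv}{\dom}(\refun{\fun}{\bd{\dom}}) = \I{\bd{\dom}}{\fun}{\ntfunv{\funv}} + \I{\dnhd{(\bd{\dom})}{\delta}}{\Deriv{\fun}}{\me} \,,
\end{equation*}
which is exactly Equation \eqref{eq:first_trace} of the paper. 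The gap is in your final step, and it is genuine. Let $\als_j := \dnhd{(\bd{\dom})}{\delta_j} \setminus \dnhd{(\bd{\dom})}{\delta_j/2}$ be the annulus carrying $\Deriv{\chi_j}$. Since $\fun\cdot(\chi_j -1)$ vanishes near $\bd{\dom}$, your annihilation property gives $\I{\dnhd{(\bd{\dom})}{\delta}}{\fun\Deriv{\chi_j}}{\me} = \I{\dnhd{(\bd{\dom})}{\delta}}{(1-\chi_j)\Deriv{\fun}}{\me}$, so your annular remainder $T(\fun) := \lim_j \I{\dnhd{(\bd{\dom})}{\delta}}{\fun\Deriv{\chi_j}}{\me}$ exists along a subnet on which $\chi_j\me$ converges weak-$*$ (a subnet is genuinely needed: $\liblnd{\dnhd{(\bd{\dom})}{\delta}}$ is not separable, so bounded sequences in its dual need not have weak-$*$ convergent subsequences). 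Now two properties of $T$ are in tension. On the one hand, $T$ depends only on $\refun{\fun}{\bd{\dom}}$: choosing $\delta_{j+1}\leq \delta_j/2$ makes the $\als_j$ pairwise disjoint, so finite additivity forces $\tova{\me}(\als_j)\to 0$, and $|h\Deriv{\chi_j}|\leq 2\Lcnorm{h}$ on $\als_j$ for Lipschitz $h$ vanishing on $\bd{\dom}$, whence $T(h)=0$. On the other hand, the only continuity available for $T$ is $|T(\fun)| \leq \const\, \normi{\Deriv{\fun}}$; the sup-norm estimate of the $j$-th term is $\norm{C}{\fun}\, 2\delta_j^{-1}\tova{\me}(\als_j)$, and $\delta_j^{-1}\tova{\me}(\als_j)$ may be unbounded even though $\tova{\me}(\als_j)\to 0$ (for instance $\tova{\me}(\als_j)=2^{-j}$, $\delta_j = 4^{-j}$ is consistent with $\me$ bounded). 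So $T$ is a functional of the boundary values that is only Lipschitz-norm continuous, and it is \emph{not} shown to be representable by a \tRaM{} on $\bd{\dom}$; ``absorbing it into $\ntfunv{\funv}$'' is precisely the unproved assertion. Worse, $T$ is an object of exactly the same kind as $\ntfs{\funv}{\dom}$ itself, so at this point your plan reduces the problem to itself.

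The missing idea is the paper's \emph{second} Hahn--Banach step, which avoids limits altogether. Consider the subspace $\BS_1 \subset \liblnd{\dnhd{(\bd{\dom})}{\delta}}$ of fields $\til{\funv} = \Deriv{\fun} + \funv'$ with $\fun \in \skpdr{1}{\infty}{\encl{\dom}}{\R}$ and $\funv'$ vanishing on some smaller neighbourhood of $\bd{\dom}$, and define $\de_1(\til{\funv}) := \I{\dnhd{(\bd{\dom})}{\delta}}{\Deriv{\fun}}{\me}$, i.e.\ discard $\funv'$ by fiat. Everything hinges on well-definedness of $\de_1$: if $\Deriv{\fun_1}+\funv_1 = \Deriv{\fun_2}+\funv_2$, then $\Deriv{(\fun_1-\fun_2)}$ vanishes on a neighbourhood of $\bd{\dom}$; because $\bd{\dom}$ is path-connected, so is that neighbourhood, hence $\fun_1 - \fun_2$ is constant there, and since $\fun_1-\fun_2$ then restricts to a constant on $\bd{\dom}$, Equation \eqref{eq:first_trace} forces $\I{\dnhd{(\bd{\dom})}{\delta}}{\Deriv{(\fun_1-\fun_2)}}{\me} = 0$. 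This is the second, decisive use of the path-connectedness hypothesis --- your proposal locates that hypothesis only in Lemma \ref{lem:bd_lip_path}, which is a symptom of the missing step. Extending $\de_1$ by Hahn--Banach and representing the extension via Proposition \ref{prop:dual_libln} produces $\ntfunvd{\funv}$ which by construction annihilates every field vanishing near $\bd{\dom}$ (hence $\cor{\ntfunvd{\funv}}\subset\bd{\dom}$) and agrees with $\me$ on all gradients; substituting it for $\me$ in the displayed equation finishes the proof.
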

\begin{remark}
	We think that the arguments in the proof can easily be
	adapted to prove the theorem for arbitrary open $\dom\csubset
	\encl{\dom}$.
\end{remark}
Note that the existence of the \tme{}s in the above theorem is trivial,
neglecting the \tcor{} and the support, $\ntfunvd{\funv}= \acme{\funv}{\lem}$ and
$\ntfunv{\funv} = \divv{\funv}$ would be viable choices. The difficulty lies in
the localisation of $\cor{\ntfunvd{\funv}} \subset \bd{\dom}$ and the support
of $\ntfunv{\funv}$.
\begin{proof}
	By Proposition \ref{prop:ntfs_sil} there exists a continuous linear
	functional $\ntfs{\funv}{\dom}$ on $\Lcspace{\bd{\dom}}$ such that for
	every $\fun \in \skpdr{1}{\infty}{\encl{\dom}}{\R}$
	\begin{equation*}
		\ntfs{\funv}{\dom} (\refun{\fun}{\bd{\dom}}) =
		\I{\dom}{\fun}{\divv{\funv}} + \I{\dom}{\funv\cdot
		\Deriv{\fun}}{\lem} \,.
	\end{equation*}
	Let $0 < \delta < \dist{\dom}{\bd{\encl{\dom}}}$. Note that by
	\cite[p. 131f]{Evans1992} every $\fun \in \skpdr{1}{\infty}{\encl{\dom}}{[0,1]}$
	is locally \tLcont{}. Since $\cl{\dom}$ is compact and path-connected, $\fun \in
	\Lcspace{\cl{\dom}}$ (cf. Lemma \ref{lem:bd_lip_path}). Then by Lemma
	\ref{lem:bd_lip_path} for every $\fun
	\in \skpdr{1}{\infty}{\encl{\dom}}{\R}$
	\begin{equation}\label{eq:ntfs_bd}
		\Lcnorm{\refun{\fun}{\bd{\dom}}}\leq
		\norm{C}{\refun{\fun}{\bd{\dom}}} + \const
		\norm{\infty,\dnhd{(\bd{\dom})}{\delta}}{\Deriv{\fun}}
	\end{equation}
	with $\const >0$ depending only on $\delta$ and $\bd{\dom}$.
	Note that
	\begin{equation*}
		\iota : \skpdr{1}{\infty}{\encl{\dom}}{\R} \to
		\Ccfun{\bd{\dom}} \times \liblnd{\dnhd{(\bd{\dom})}{\delta}}
		\text{ with } \iota(\fun) = \left(\refun{\fun}{\bd{\dom}},
		\refun{\Deriv{\fun}}{\dnhd{(\bd{\dom})}{\delta}}\right)
	\end{equation*}
	is continuous and linear. Set
	\begin{equation*}
		\BS_0 := \iota(\skpdr{1}{\infty}{\encl{\dom}}{\R}) 
	\end{equation*}
	and define
	\begin{equation*}
		\de_0 : \BS_0 \to \R \mspace{6mu}\text{ with }\mspace{6mu}
		\df{\de_0}{(\refun{\fun}{\bd{\dom}},\refun{\Deriv{\fun}}{\dnhd{(\bd{\dom})}{\delta}})}
		= \ntfs{\funv}{\dom}(\refun{\fun}{\bd{\dom}}) \,.
	\end{equation*}
	Then $\de_0$ is a continuous linear functional on the linear space $\BS_0 \subset
	\Ccfun{\bd{\dom}}\times \liblnd{\dnhd{(\bd{\dom})}{\delta}}$ by
	equation \eqref{eq:ntfs_bd}. 
	
	By the Hahn-Banach Theorem (cf.	\cite[p. 63]{dunford_linear_1988}) there exists a continuous linear
	extension $\de$ of $\de_0$ to all of $\Ccfun{\dom}\times
	\liblnd{\dnhd{(\bd{\dom})}{\delta}}$ with $\norm{}{\de}=
	\norm{}{\de_0}$. Note that the dual of a product space can be
	identified with the product of the dual spaces. Hence, as in Proposition \ref{prop:dual_libln}, there exist a \tRaM{}
	$\ntfunv{\funv}$ on $\bd{\dom}$ and a \tme{} $\me \in
	\bawln{\dnhd{(\bd{\dom})}{\delta}}$ such that for all $\fun\in
	\skpdr{1}{\infty}{\encl{\dom}}{\R}$
	\begin{equation}\label{eq:first_trace}
		\ntfs{\funv}{\dom}(\refun{\fun}{\bd{\dom}}) =
		\df{\de}{(\refun{\fun}{\bd{\dom}},\refun{\Deriv{\fun}}{\dnhd{(\bd{\dom})}{\delta}})}
		= \I{\bd{\dom}}{\fun}{\ntfunv{\funv}} +
		\I{\dnhd{(\bd{\dom})}{\delta}}{\Deriv{\fun}}{\me}
		\,.
	\end{equation}
	This proves that there is $\me \in \bawln{\encl{\dom}}$ with $\cor{\me}
	\subset \dnhd{(\bd{\dom})}{\delta}$ such that the above equation is
	satisfied. It remains to show that there exists $\ntfunvd{\funv}$ with
	$\cor{\ntfunvd{\funv}}\subset \bd{\dom}$ satisfying the same equation.
	Now, let
	\begin{align*}
		\BS_1 :=\{\til{\funv} \in
			\liblnd{\dnhd{(\bd{\dom})}{\delta}} \setpipe &\exists
			\fun \in \skpdr{1}{\infty}{\encl{\dom}}{\R}\\
			& \exists \funv \in \liblnd{\dnhd{(\bd{\dom})}{\delta}}: \\
			& \funv = 0 \text{ on }
			\dnhd{(\bd{\dom})}{\til{\delta}} \text{ for some } 0<
			\til{\delta} < \delta \\
			& \til{\funv} =	\Deriv{\fun} + \funv \}
	\end{align*}
	Then $\de_1 : \BS_1 \to \R$ with 
	\begin{equation*}
		\de_1(\til{\funv}) :=
		\I{\dnhd{(\bd{\dom})}{\delta}}{\Deriv{\fun}}{\me}
	\end{equation*}
	defines a linear functional on $\BS_1$ with
	\begin{equation*}
		\norm{}{\de_1} \leq \norm{}{\me}\,.
	\end{equation*}
	First, it is shown that the definition is independent of the
	decomposition of $\til{\funv}$. Therefore, let $\til{\funv}\in \BS_1$ and $\fun_1,\fun_2 \in
	\skpdr{1}{\infty}{\encl{\dom}}{\R}$, $\funv_1,\funv_2 \in
	\liblnd{\dnhd{(\bd{\dom})}{\delta}}$ be such that for some $0 <
	\til{\delta}< \delta$
	\begin{equation*}
		\funv_1 = \funv_2 =0 \text{ on }
		\dnhd{(\bd{\dom})}{\til{\delta}} 
	\end{equation*}
	and 
	\begin{equation*}
		\til{\funv} = \Deriv{\fun_1} + \funv_1 = \Deriv{\fun_2} +
		\funv_2 \,.
	\end{equation*}
	Then
	\begin{equation*}
		\Deriv{\fun_1} = \Deriv{\fun_2} \text{ on }
		\dnhd{(\bd{\dom})}{\til{\delta}} \,.	
	\end{equation*}
	Since $\bd{\dom}$ is path-connected, $\dnhd{(\bd{\dom})}{\til{\delta}}$ is path-connected. Hence $\fun_1
	- \fun_2$ is constant on
	$\dnhd{(\bd{\dom})}{\til{\delta}}$. Note that for $\til{\const}\in \R$
	\begin{equation*}
		\ntfs{\funv}{\dom}(\til{\const}) =
		\I{\bd{\dom}}{\til{\const}}{\ntfunv{\funv}} +
		\I{\dnhd{(\bd{\dom})}{\delta}}{0}{\me} =
		\I{\bd{\dom}}{\til{\const}}{\ntfunv{\funv}} \,.
	\end{equation*}
	Hence Equation \eqref{eq:first_trace} yields
	\begin{equation*}
		\I{\dnhd{(\bd{\dom})}{\delta}}{\Deriv{(\fun_1-\fun_2)}}{\me} =
		\ntfs{\funv}{\dom}(\refun{(\fun_1 - \fun_2)}{\bd{\dom}}) - \I{\bd{\dom}}{\fun_1 -
		\fun_2}{\ntfunv{\funv}} = 0 \,.
	\end{equation*}
	This shows that $\de_1$ is well-defined.

	Since $\BS_1 \subset \liblnd{\dnhd{(\bd{\dom})}{\delta}}$ is a linear
	subspace, the
	Hahn-Banach Theorem (cf. \cite[p. 63]{dunford_linear_1988}) yields an
	extension of $\de_1$ to all of $\liblnd{\dnhd{(\bd{\dom})}{\delta}}$
	and by Proposition \ref{prop:dual_libln} a \tme{} $\me_\funv\in
	\bawln{\dnhd{(\bd{\dom})}{\delta}}$ with
	\begin{equation*}
		\df{\de_1}{\til{\funv}} =
		\I{\dnhd{(\bd{\dom})}{\delta}}{\til{\funv}}{\ntfunvd{\funv}}
		\,.
	\end{equation*}
	By definition, 
	\begin{equation*}
		\I{\dnhd{(\bd{\dom})}{\delta}}{\funv}{\ntfunvd{\funv}}=\df{\de_1}{0
		+ \funv}=	0 
	\end{equation*}
	for $\funv \in \liblnd{\dnhd{(\bd{\dom})}{\delta}}$ with
	$\funv = 0$ on $\dnhd{(\bd{\dom})}{\til{\delta}}$ for some $0 <
	\til{\delta} < \delta$.
	Hence,
	\begin{equation*}
		\cor{\ntfunvd{\funv}} \subset \bd{\dom} \,.
	\end{equation*}
	Since for every $\fun \in \skpdr{1}{\infty}{\encl{\dom}}{\R}$
	\begin{equation*}
		\I{\dnhd{(\bd{\dom})}{\delta}}{\Deriv{\fun}}{\me} =
		\df{\de_1}{\Deriv{\fun} + 0} = 
		\I{\dnhd{(\bd{\dom})}{\delta}}{\Deriv{\fun}}{\ntfunvd{\funv}}
	\end{equation*}
	by definition, the statement of the theorem follows.
\end{proof}
\begin{remark}
	Note that the \tme{} $\ntfunvd{\funv}$ is a direct result of the
	analysis. In regular settings, this measure is expected to be zero (see
	also the following examples). For $\funv \in \dmid{\encl{\dom}}$
	and open $\dom\in \bor{\encl{\dom}}$ having finite perimeter such that
	the inner normal measure exists (see Example \ref{ex:inner_port}), Proposition
	\ref{prop:ntfunv_dmi} and Proposition \ref{prop:ram_on_core} yield the
	existence of a \tRaM{} $\ntfunv{\funv}$ on $\bd{\dom}$ such that for all compactly
	supported continuous
	functions $\fun \in \BVd{\encl{\dom}}$
	\begin{equation*}
		\divv{(\fun \cdot \funv)}(\dom) = \I{\bd{\dom}}{\fun}{\ntfunv{\funv}}
		\,.
	\end{equation*}
	In particular, $\ntfunvd{\funv} = 0$.

	For the general case note that for $k \in \N$
	\begin{equation*}
		\nap_k := \ind{\dnhd{(\bd{\dom})}{\frac{1}{k}}} (1-k
		\distf{\bd{\dom}}) \in \skpdr{1}{\infty}{\encl{\dom}}{\R} \,.
	\end{equation*}
	Since $\nap_k = 1$ on $\bd{\dom}$, Proposition \ref{prop:lip_bd_zero}
	yields
	\begin{equation*}
		\I{\dom}{\fun \nap_k}{\divv{\funv}} + \I{\dom}{\funv\cdot
		\Deriv{(\fun \cdot \nap_k)}}{\lem} =
			\I{\dom}{\fun}{\divv{\funv}} +
			\I{\dom}{\funv\cdot\Deriv{\fun}}{\lem} \,.
	\end{equation*}
	Using Dominated Convergence (cf. \cite[p. 20]{Evans1992}) yields
	\begin{align*}
		\I{\dom}{\fun \nap_k}{\divv{\funv}} & \xrightarrow{k \to \infty}
		0 \\
		\I{\dom}{\funv\cdot \Deriv{(\fun\nap_k)}}{\lem} & =
		\I{\dom}{\nap_k \funv \Deriv{\fun} + \fun \funv
		\Deriv{\nap_k}}{\lem} \\
		& \xrightarrow{k \to \infty} 0 + \lim \limits_{k \to \infty}
		\I{\dom}{\fun\funv\Deriv{\nap_k}}{\lem} \,,
	\end{align*}
	where the last limit exists because the other addends tend to zero and
	their sum is constant. Hence
	\begin{equation*}
		\lim \limits_{k \to \infty}
		\I{\dom}{\fun\funv\Deriv{\nap_k}}{\lem} =
		\I{\bd{\dom}}{\fun}{\ntfunv{\funv}} +
		\sI{\bd{\dom}}{\Deriv{\fun}}{\ntfunvd{\funv}} \,.
	\end{equation*}
	Note that the left-hand side is essentially the same as in Schuricht
	\cite[p. 534]{schuricht_new_2007} (cf. \cite[p. 449]{silhavy_divergence_2009}). 
\end{remark}
\begin{change}

The following examples proves, that $\ntfunvd{\funv}$ can be non-zero.
The function in the following example is the same as in
\cite[p. 449f]{silhavy_divergence_2009}.
\begin{example}
	Let $n=2$ and $\encl{\dom} = \ball{0}{2}\subset \R^2$. Furthermore, let
	$\dom = (0,1)^2$ and $\funv \in \dmpd{1}{\encl{\dom}}$ be defined via
	\begin{equation*}
		\funv(\eR,\eRR) := \frac{1}{\eR^2 + \eRR^2} \begin{pmatrix}
			\eRR \\ -\eR
		\end{pmatrix} \,.
	\end{equation*}
	Note that $\divv{\funv}$ is the zero \tme{}. In order to see this, let $\sfun \in
	\Conec{\encl{\dom}}$. Then
	\begin{align*}
		\I{\encl{\dom}}{\funv\cdot \Deriv{\sfun}}{\lem} & = \lim
		\limits_{\delta \downarrow 0} \I{\encl{\dom}\setminus
		\ball{0}{\delta}}{\funv \cdot \Deriv{\sfun}}{\lem} \\
		& = \lim \limits_{\delta \downarrow 0}
		\I{\bd{\ball{0}{\delta}}}{\sfun \funv \cdot
			\normal{}}{\ham^{n-1}} - \I{\encl{\dom}\setminus
		\ball{0}{\delta}}{\sfun \divv{\funv}}{\lem} \,.
	\end{align*}
	But $\funv \cdot \normal{} = 0$ on $\bd{\ball{0}{\delta}}$ and $\divv{\funv} =
	0$ on $\encl{\dom}\setminus \ball{0}{\delta}$. 

	Now, set
	\begin{equation*}
		\fun_k := \ind{\left (\frac{1}{k},\infty\right) \times \R} +
		\ind{\left (0,\frac{1}{k}\right)\times \R} k \distf{\{0\}\times
		\R} \in \skpdr{1}{\infty}{\encl{\dom}}{[0,1]} \,.
	\end{equation*}
	Then
	\begin{equation*}
		\Deriv{\fun_k} = \ind{\left (0,\frac{1}{k}\right) \times \R} k
		e_1 \,.
	\end{equation*}
	By Theorem \ref{thm:ggt_ubound}, there exists a \tRaM{}
	$\ntfunv{\funv}$ on $\bd{\dom}$
	and $\ntfunvd{\funv} \in \bawln{\encl{\dom}}$ with
	$\cor{\ntfunvd{\funv}} \subset \bd{\dom}$ such that for $k\in \N$
	\begin{equation*}
		\I{\dom}{\funv \cdot \Deriv{\fun_k}}{\lem} +
		\I{\dom}{\fun_k}{\divv{\funv}} =
		\sI{\bd{\dom}}{\Deriv{\fun_k}}{\ntfunvd{\funv}} +
		\I{\bd{\dom}}{\fun_k}{\ntfunv{\funv}} \,.
	\end{equation*}
	But $\fun_k = 0$ on $\bd{\dom}$ and $\divv{\funv} = 0$.
	Hence
	\begin{equation*}
		\I{\dom}{\funv \cdot \Deriv{\fun_k}}{\lem} =
		\sI{\bd{\dom}}{\Deriv{\fun_k}}{\ntfunvd{\funv}} \,.
	\end{equation*}
	Furthermore
	\begin{align*}
		\I{\dom}{\funv\cdot \Deriv{\fun_k}}{\lem} & = \mI{\left
			(0,\frac{1}{k}\right)}{\I{(0,1)}{\frac{\eRR}{\eR^2+
	\eRR^2}}{\eRR}}{\eR} \\
	& = \mI{\left(0,\frac{1}{k}\right)}{\left [\frac{1}{2} \ln (\eR^2+
\eRR^2)\right]_0^1}{\eR} \\
	& = \mI{\left(0,\frac{1}{k}\right)}{\frac{1}{2}\ln\left(\frac{1}{\eR^2}
+1\right)}{\eR} \\
& \geq \frac{1}{2} \ln (k^2 + 1) \xrightarrow{k \to \infty} \infty \,.
	\end{align*}
\end{example}
The example above shows that $\ntfunvd{\funv}$ can actually be non-zero, if the
concentrations of the vector field $\funv$ are sufficiently large near
$\bd{\dom}$. Thus, $\ntfunvd{\funv}$ is indeed necessary for the
characterisation of the Gauß-Green formula.
\begin{example}
	Revisiting Example \ref{ex:dirac_mass}, let $n=2$ and $\encl{\dom} := \ball{0}{2}\subset \Rn$. Furthermore, let
	$\dom := (0,1) \times (-1,1)$ and $\funv \in \dmpd{1}{\encl{\dom}}$ be
	defined by
	\begin{equation*}
		\funv(\eR,\eRR) := \frac{1}{2\pi}\frac{1}{\eR^2 + \eRR^2} \begin{pmatrix}
			\eR \\ \eRR 
		\end{pmatrix} \,.
	\end{equation*}
	Recall that $\divv{\funv} = \delta_0$.
	For $k \in \N$ let $\fun_k\in \skpdr{1}{\infty}{\encl{\dom}}{[0,1]}$ be
	defined by
	\begin{equation*}
		\fun_k := \ind{\left (\frac{1}{k},\infty\right )\times \R} + k
			\distf{\{0\}\times \R} \ind{\left
		(0,\frac{1}{k}\right)\times \R} \,.	
	\end{equation*}
	Then 
	\begin{align*}
		\I{\dom}{\funv\cdot \Deriv{\fun_k}}{\lem} & = \mI{\left
			(0,\frac{1}{k}\right)}{\I{(-1,1)}{\frac{\eR}{2\pi(\eR^2 +
		\eRR^2)}}{\eRR}}{\eR} \\
		& = \frac{1}{2\pi}\mI{\left(0,\frac{1}{k}\right)}{\left [
\arctan \frac{\eRR}{\eR}\right]_{-1}^1}{\eR} \\
& = \frac{1}{2\pi} \mI{\left (0,\frac{1}{k}\right)}{2\arctan\frac{1}{\eR}}{\eR}
\\
& \xrightarrow{k \to \infty} \frac{1}{2} \,.
	\end{align*}
\end{example}
In contrast to the previous example, this example shows a vector field
with a strongly concentrated divergence in zero, yet $\ntfunvd{\funv}$ seems to
be zero. Indeed, in \cite[p. 449]{silhavy_divergence_2009} Silhavy shows that
the normal trace can be represented by a \tRaM{}, if 
\begin{equation*}
	\lim \limits_{\delta \downarrow 0}
	\frac{1}{\delta}\I{\dom\setminus\idnhd{\dom}{\delta}}{|\funv \cdot
	\Deriv{\distf{\bd{\dom}}}|}{\lem} < \infty \,.
\end{equation*}
This holds true in the last example and thus $\ntfunvd{\funv}= 0$.
\end{change}
\dobib

\bibliographystyle{plain}
\bibliography{Diss.bib}

\end{document}